\documentclass[11pt]{article}
\usepackage{xcolor}
\usepackage{bm}
\usepackage{geometry}
\usepackage{amsmath}
\usepackage{amsfonts}
\usepackage{amssymb}
\usepackage{amsthm}
\usepackage{mathtools}
\DeclarePairedDelimiterX{\inp}[2]{\langle}{\rangle}{#1, #2}
\usepackage{mathrsfs}
\usepackage{authblk}
\usepackage{hyperref}
\usepackage[capitalize]{cleveref}
\usepackage{microtype}
\hypersetup{hidelinks}

\newtheorem{notation}{Notation}
\newtheorem{defn}{Definition}
\newtheorem{concl}{Conclusion}

\newtheorem{theorem}{Theorem}
\newtheorem{proposition}{Proposition}
\newtheorem{lemma}{Lemma}[section]

\newtheorem{remark}{Remark}

\renewcommand{\d}{\mathrm{d}}
\newcommand{\dt}{\partial_t}
\newcommand{\dx}{\partial_x}
\newcommand{\dy}{\partial_y}
\newcommand{\dX}{\partial_X}
\newcommand{\dY}{\partial_Y}
\newcommand{\Q}{\mathcal{Q}}
\renewcommand{\u}{\mathbf{u}}

\newcommand{\R}{\mathbb{R}}
\newcommand{\co}{\operatorname{co}}
\newcommand{\sgn}{\operatorname{sgn}}

\numberwithin{equation}{section}

\renewcommand{\thedefn}{\thesection.\arabic{defn}}

\renewcommand{\thelemma}{\thesection.\arabic{lemma}}

\DeclareMathOperator{\supp}{supp}

\begin{document}
    \title{Small Viscosity Limit of Navier-Stokes Equations in A Quarter Plane with Navier Slip Conditions}

\author{
    Wen GUO\thanks{
        Corresponding author. 
        School of Mathematical Sciences, Shanghai Jiao Tong University, Shanghai, China. Email: \href{mailto:guo-wen@sjtu.edu.cn}{guo-wen@sjtu.edu.cn}} 
     \qquad Ya-Guang WANG\thanks{
        School of Mathematical Sciences, MOE-LSC and SHL-MAC, Shanghai Jiao Tong University, Shanghai, China. Email:
        \href{mailto:ygwang@sjtu.edu.cn}{ygwang@sjtu.edu.cn}}
        }

    \date{\today}
    \maketitle
    \selectfont
    \tableofcontents

    \begin{abstract}
        We study the vanishing-viscosity limit of Leray solutions to the two-dimensional incompressible Navier--Stokes equations in a quarter plane with Navier slip boundary conditions on both edges for a class of compatible well-prepared initial data. By multi-scale analysis, we deduce that a boundary layer of order $O(\sqrt{\epsilon})$ arises near each edge of the corner and is described by a linearized Prandtl equation, and near the corner, their overlap and nonlinear interaction are captured by a two-dimensional corner profile. Although this profile occurs at order $O(\sqrt{\epsilon})$, it is governed, in the fast variables, by a nonlinear initial-boundary value problem of Navier--Stokes type. The well-posedness and Sobolev regularity of the outer flow problems, together with the well-posedness and weighted estimates of the boundary-layer and corner-layer problems are obtained on a common time interval $[0,T]$ independent of $\epsilon$. To ensure the required regularity of the outer Euler flow in the critical right-angle domain, it is assumed that the initial vorticity of the outer leading order flow vanishes in a neighborhood of the corner. We then rigorously justify the resulting composite expansion in the small viscosity limit. More precisely, after retaining the leading outer flow, the two weak boundary layers, the nonlinear corner layer, and the order-$\epsilon$ outer correction, the error between the exact solution and this truncated expansion is $O(\epsilon^{5/4})$ in $L^\infty(0,T;L^2(\Q))$ and $O(\epsilon^{3/4})$ in $L^2(0,T;H^1(\Q))$.

    \end{abstract}

~\\{\textbf{\scriptsize{2020 Mathematics Subject Classification:}} \scriptsize{Primary 35Q30; Secondary 35B25, 35C20, 76D09, 76D10}}.
~\\
\textbf{\scriptsize{Keywords:}} {\scriptsize{Incompressible Navier-Stokes equations in a quarter plane, Navier boundary condition, small viscosity limit, nonlinear corner layer, linearized Prandtl boundary layer, well-posedness, stability analysis.}}

\section{Introduction}\label{sec-intro}
Consider the small viscosity limit for the following incompressible Navier-Stokes equations in $\R_+\times \Q$, where $\Q = \{x,y>0\}$ is a quarter plane:
\begin{equation}\label{INSE}
\begin{cases}
\partial_t \u^\epsilon + \u^\epsilon \cdot \nabla \u^\epsilon + \nabla p^\epsilon = \epsilon \Delta \u^\epsilon, \quad \text{in } \Q,\\
\nabla \cdot \u^\epsilon = 0,\quad \text{in } \Q,
\end{cases}
\end{equation}
with the Navier boundary conditions
\begin{equation}\label{N-BC}
    \u^\epsilon \cdot \vec{n} = 0, \quad \left(D (\u^\epsilon) \vec{n} + \alpha \u^\epsilon \right)_{\vec{\tau}} = 0, \quad \text{on } \partial \Q,
\end{equation}
and the initial data
\begin{equation}\label{icd}
    \u^\epsilon(0,\cdot) = \u^\epsilon_0 \quad \text{in } \Q,
\end{equation}
where $\u^\epsilon = (u^\epsilon,v^\epsilon) $, $p^\epsilon$ are the velocity and the pressure, $\epsilon$ is the viscosity and $D(\u^\epsilon) = \frac{1}{2}\left(\nabla \u^\epsilon + (\nabla \u^\epsilon)^T \right)$ is the rate of the strain tensor,  $\vec{n}$ and $\vec{\tau}$ are unit normal and tangent vectors on the boundary $\partial \Q$, $\alpha\in\mathbb R$ is a friction coefficient independent of $\epsilon$, and $\u^\epsilon_0$ is a given vector field in $L^2(\Q)$, whose precise asymptotic structure and regularity are specified below, satisfying
\begin{equation}\label{cc-0}
    \nabla\cdot \u^\epsilon_0 = ({\u^\epsilon_0} \cdot \vec{n})_{|\partial\Q} = 0
\end{equation}

The existence and uniqueness of the Leray–Hopf solution to the above problem follows from standard arguments, 
one can refer to \cite{kelliher2006navier, iftimie2011viscous} and references therein. 
In particular, for any given $L^2(\Q)$ divergence-free vector field $\u^\epsilon_0$ tangent to the boundary $\partial\Q$, there exists a unique global Leray-Hopf solution in the energy class,
\begin{equation*}
    \u^\epsilon \in C^0_w([0, +\infty);L^2(\Q)) \cap L^2_{\text{loc}}((0,+\infty);H^1(\Q))
\end{equation*}
of \eqref{INSE}-\eqref{icd} satisfying the energy inequality
\begin{equation}\label{NS-ineq}
\begin{aligned}
\|\u^\epsilon (t)\|_{L^2(\Q)}^2
+ \epsilon \int_0^t \|\nabla\u^\epsilon(s)\|_{L^2(\Q)}^2 \le \exp\!\left(C\epsilon t\right)\|\u^\epsilon_0\|_{L^2(\Q)}^2, \quad \forall t>0.
\end{aligned}
\end{equation}
The latter estimate follows by controlling the boundary term with the trace inequality; here $C$ is independent of $\epsilon$ and may depend on $|\alpha|$.

The vanishing viscosity limit for the incompressible Navier-Stokes equations with Navier boundary conditions has been extensively studied when the boundary is smooth. In two dimensions, Clopeau, Mikeli\'c and Robert \cite{clopeau1998vanishing} proved convergence to the Euler equations by using uniform vorticity estimates. This result was extended to initial vorticities in $L^p$, $p>2$, by Lopes Filho, Nussenzveig Lopes and Planas \cite{lopesfilho2005inviscid}; see also Kelliher \cite{kelliher2006navier}. Iftimie and Planas \cite{iftimie2006inviscid} proved the strong $L^2$ convergence of Leray solutions to a sufficiently regular Euler flow by a direct energy argument. For strong solutions, Masmoudi and Rousset \cite{masmoudi2012uniform} established estimates uniform with respect to the viscosity in conormal Sobolev spaces and obtained the inviscid limit by a compactness argument, and it was extended to the inviscid limit problem of the incompressible MHD equations in \cite{MengWang}. On the other hand, Iftimie and Sueur \cite{iftimie2011viscous} justified an asymptotic expansion involving a weak boundary layer of amplitude and thickness $O(\sqrt{\epsilon})$, and Gie and Kelliher \cite{gie2012boundary} constructed explicit boundary correctors and obtained quantitative convergence estimates.
The asymptotic expansions were derived in \cite{WangWangXin} for the incompressible Navier-Stokes equations with Navier boundary conditions of the slip length depending on viscosity.
The small viscosity limit for compressible Navier-Stokes equations with Navier boundary condition was studied in \cite{WangWilliams}.
The above works were focused on problems defined in half-spaces or domains with smooth boundaries, in which there is not any interaction of boundary layers.

The goal of this paper is to understand the interaction of boundary layers near non-smooth points of boundaries. As a starting point, we study the asymptotic expansion of the small viscosity limit for the incompressible Navier-Stokes equations in a quarter plane with Navier boundary conditions. 
By multi-scale analysis, we shall see that there is a boundary layer at order $O(\sqrt{\epsilon})$ along each boundary component away from the corner, and the boundary layer profiles satisfy the problems of linearized Prandtl equations. Near the corner, however, there exists a corner layer, which is produced by the interaction of two boundary layers along the edges of the corner; although it appears at order $O(\sqrt{\epsilon})$, the corner-layer profile satisfies a nonlinear problem. We shall construct the corner layer profiles and rigorously justify the resulting asymptotic expansion by deriving a quantitative estimate for the remainder. To the best of our knowledge, this is the first observation of a nonlinear corner layer capturing the interaction of two weak Navier boundary layers, and the first rigorous result on the inviscid limit of the incompressible Navier--Stokes equations with Navier boundary conditions in a corner domain.


More precisely, we look for the solution $\mathbf{u}^\epsilon$ to \eqref{INSE}-\eqref{icd} in the form
\begin{equation}\label{ex-1}
    \u^\epsilon(t,x,y) = \u^0(t,x,y) + \sqrt{\epsilon} \mathbf{U}\left(t,x,y,\frac{x}{\sqrt{\epsilon}},\frac{y}{\sqrt{\epsilon}} \right) + O({\epsilon}),
\end{equation}
where $\u^0$ is the solution to the following problem of the Euler equation:
\begin{equation}\label{limit}
    \begin{cases}
        \partial_t \u^0 + \u^0 \cdot \nabla \u^0 + \nabla p^0 = 0, \quad \text{in } \Q,\\ 
        \nabla \cdot \u^0 = 0,\quad \text{in } \Q,\\
        \u^0 \cdot \vec{n} = 0,\quad \text{on } \partial\Q\setminus\{{\bf 0}\},
    \end{cases}
\end{equation}
 and $\mathbf{U}$ contains the boundary layers near $\{x=0\}\cup\{y=0\}$ and corner layer near $\{x=y=0\}$ decaying rapidly in its last two variables. 
 
 The global existence of the solution to the problem \eqref{limit} can be found in \cite{MR3054600, MR3981393}. The justification of the asymptotic expansion \eqref{ex-1}, however, requires additional regularity of the Euler profile. In particular, in order to obtain an $O(\epsilon)$ remainder in $L^\infty(0,T; L^2(\Q))$, we need
$$
(\mathbf u^0,p^0)\in L^2(0,T;H^2(\mathcal Q))
\times L^2(0,T;H^1(\mathcal Q)).
$$
Indeed, for a divergence-free vector field tangent to the boundary, the corresponding corner Hodge estimates imply that, if without additional compatibility conditions, the derivation 
$$
\omega^0\in H^m(\Q)\quad\Longrightarrow\quad \u^0\in H^{m+1}(\Q)
$$
is guaranteed under the restriction that the measure of the angle $\theta<\frac{\pi}{m+1}$, see \cite{grisvard2011elliptic,dauge1988elliptic,zajaczkowski1984solvability}. Thus, for $\theta=\pi/2$, the case $m=1$ is critical and an $r^2\log r$-type term may prevent $H^2$-regularity. To exclude this corner singularity we shall assume that the initial vorticity vanishes near the corner, this property is preserved by the Euler evolution. The corresponding well-posedness and regularity result for \eqref{limit} is stated as follows.
\begin{theorem}\label{thm-Euler}
    Let $m\ge 4$ be an integer. Suppose that $\u_0 \in H^m(\Q)$ satisfies
    \begin{equation*}
        \nabla \cdot \u_0 = 0, \quad \u_0 \cdot \vec{n}_{|\partial\Q} = 0, \qquad \nabla \times \u_0 = 0 \text{ in a small neighborhood of }\mathbf{0}.
    \end{equation*}
    Then there exists $T>0$ such that there exists a unique solution $$(\u^0, p^0) \in L^\infty(0,T; H^{m}(\Q)) \times L^\infty(0,T; H^1(\Q) / \R)$$ to \eqref{limit} with initial data $\u_0$.
\end{theorem}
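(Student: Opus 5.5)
The plan is to reduce the problem to the whole plane by a reflection that is compatible with the corner geometry, apply the classical $H^m$ well-posedness theory for the 2D Euler equations on $\R^2$, and restrict back. Given $\u_0=(u_0,v_0)$ on $\Q$, extend it to $\R^2$ by successive odd/even reflections. Across $\{x=0\}$, extend $u$ oddly and $v$ evenly in $x$; then reflect across $\{y=0\}$ with $u$ even and $v$ odd in $y$. The extended field $\tilde\u_0$ is divergence free in the distributional sense, since the normal components vanish on the edges. It is symmetric under both reflections $R_1(x,y)=(-x,y)$ and $R_2(x,y)=(x,-y)$, in the sense that $\tilde\u_0(R_i z)=R_i\tilde\u_0(z)$.

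The key point is regularity of $\tilde\u_0$ in $H^m(\R^2)$. Reflection of an $H^m$ function across a flat boundary gives $H^m$ only if the odd-order normal derivatives of the even components and the even-order normal derivatives of the odd components vanish on the edge, up to order $m-1$. I will verify these compatibility conditions separately away from the corner and near it.
\begin{itemize}
\item \emph{Near the corner.} Here $\nabla\times\u_0=0$ and $\nabla\cdot\u_0=0$, so $\u_0=\nabla\phi$ with $\phi$ harmonic and $\partial_n\phi=0$ on both edges. By the Neumann reflection principle, $\phi$ extends evenly to a harmonic function in a full neighbourhood of the origin. Hence $\tilde\u_0$ is smooth, indeed real analytic, there.
\item \emph{Away from the corner.} The data need not satisfy the higher compatibilities, for example $\partial_x^2 u_0=0$ on $\{x=0\}$. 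I expect this to be the main obstacle. Reflection does not directly give $H^m$, so I will not rely on it there. Instead I will use a localization: a partition of unity splits the problem into a region near the corner (handled by the reflected whole-plane solution) and regions near smooth edges or interior. In the latter regions the standard half-plane/smooth-domain theory of Kato and Temam (or Bourguignon–Brezis) gives $H^m$ Euler solutions with slip condition, and no extra compatibility is needed.
\end{itemize}

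Rather than literally gluing solutions, I would carry out an a priori $H^m$ energy estimate on $\Q$ directly. The estimate combines several ingredients.
\begin{itemize}
\item \emph{Vorticity transport.} In 2D, $\omega=\nabla\times\u^0$ is transported: $\partial_t\omega+\u^0\cdot\nabla\omega=0$. Since $\u^0$ is tangent to both edges, the flow map preserves $\Q$ and fixes the corner. By continuity of the Lipschitz flow, $\omega(t)$ vanishes in a neighbourhood $B_{\delta(t)}$ of $\mathbf 0$ with $\delta(t)\ge \delta_0 e^{-\int_0^t\|\nabla \u^0\|_{L^\infty}}$.
\item \emph{Velocity recovery.} Recover $\u^0$ from $\omega$ by the div-curl system on $\Q$ with tangency. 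Via reflection, the stream function solves a Dirichlet problem, $\psi$ odd in both variables, so $\u^0=\nabla^\perp\Delta^{-1}_{D}\omega$.
\item \emph{Corner elliptic regularity.} Because $\omega$ vanishes near the corner, the odd–odd extension $\tilde\omega$ lies in $H^{m-1}(\R^2)$. The odd extension across each edge is smooth near the corner (it is zero there), and away from the corner only the half-plane Dirichlet problem matters, for which $H^{m-1}\to H^{m}$ regularity holds. Hence $\|\u^0\|_{H^m(\Q)}\le C(\delta)\big(\|\omega\|_{H^{m-1}}+\|\u^0\|_{L^2}\big)$, with $C$ depending on $\delta(t)$ through the cutoff.
\item \emph{Closing the estimate.} Estimate $\|\omega\|_{H^{m-1}}$ by commutator estimates for the transport equation; boundary terms vanish because of tangency. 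This gives
$$\frac{d}{dt}\|\omega\|_{H^{m-1}}\le C\|\u^0\|_{H^m}\|\omega\|_{H^{m-1}},$$
hence a closed Gronwall inequality on a short interval $[0,T]$, where $\delta(t)\ge\delta_0/2$.
\end{itemize}

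Existence follows by a standard iteration or Galerkin approximation preserving these bounds (vorticity iteration $\omega^{n+1}$ transported by $\u^n$), or by mollifying data and passing to the limit. The pressure comes from $\Delta p^0=-\nabla\u^0:\nabla\u^0{}^T$ with the Neumann condition obtained from the normal component of the equation, giving $\nabla p^0\in L^\infty(0,T;L^2)$. Uniqueness follows from an $L^2$ energy estimate for the difference of two solutions, using $\nabla\u^0\in L^\infty$ (since $m\ge 4$) and the vanishing of boundary fluxes. The delicate step throughout is the uniform corner elliptic estimate, which hinges on keeping $\omega$ supported away from $\mathbf 0$.
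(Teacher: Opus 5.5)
Your proposal is correct in substance but follows a genuinely different route from the paper.

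\textbf{How the paper argues.} The paper never closes an a priori estimate in $H^m(\mathcal{Q})$. It works in the conormal class $E^m$ ($\mathbf{u}\in H^m_{\mathrm{co}}$, $\omega\in H^{m-1}_{\mathrm{co}}\cap W^{1,\infty}$), because the even--odd reflection preserves conormal regularity but not $H^m$ regularity. Symmetric smooth approximations of the reflected data are evolved by the classical whole-plane theory. Uniform conormal bounds are then proved on $\mathcal{Q}$, using tangential energy estimates with Hardy's inequality, a Neumann problem for the pressure, a $W^{1,\infty}$ bound on $\omega$, and harmonicity near the corner together with Lemma~\ref{lemma-tan-vor} to recover normal derivatives. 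Compactness gives the solution. Only afterwards does Proposition~\ref{prop-E,0-omega} upgrade to $H^m$, by transporting $\nabla^k\omega$ without boundary conditions; this is exactly your vorticity estimate.

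\textbf{How your route differs.} You make that $H^{m-1}$ transport estimate the primary a priori bound. You combine it with a localized elliptic estimate: half-plane Dirichlet regularity for the stream function away from the corner, and odd reflection of the harmonic stream function at the corner. You also track the vorticity-free radius by the elementary bound $\delta(t)\ge\delta_0e^{-\int_0^t\|\nabla\mathbf{u}\|_{L^\infty}}$, instead of citing Lacave--Zlato\v{s}. This is considerably lighter. What the paper's heavier route buys is a construction of solutions for free, and well-posedness for the larger class $E^m$.

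\textbf{First correction.} The odd--odd extension $\tilde\omega$ is not in $H^{m-1}(\R^2)$ in general; that would require $\omega$ and its even normal derivatives to vanish on the edges. What actually proves $\|\mathbf{u}\|_{H^m}\le C(\delta)(\|\omega\|_{H^{m-1}}+\|\mathbf{u}\|_{L^2})$ is your local half-plane argument, which needs no compatibility.

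\textbf{Second correction, more serious.} ``Mollify the data and pass to the limit'' does not work in your $H^m$ framework. The natural way to produce smooth solutions on $\mathcal{Q}$ is whole-plane reflection, and such approximations satisfy $\partial_x^2u|_{x=0}=0$. This trace is weakly continuous on $H^m(\mathcal{Q})$ for $m\ge3$, whereas $\partial_x^2u_0|_{x=0}=-\partial_y\omega_0|_{x=0}\neq0$ in general. Hence these approximations cannot be uniformly bounded in $H^m(\mathcal{Q})$, and this obstruction is exactly why the paper goes conormal.

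You must therefore build the solution intrinsically on $\mathcal{Q}$, for example by your vorticity iteration. Two further points are needed there:
\begin{itemize}
\item You need an $L^2$ bound on $\mathbf{u}^{n+1}$, because the Biot--Savart law on the unbounded $\mathcal{Q}$ does not map $H^{m-1}$ vorticity into $L^2$ velocity. One remedy is to take $\mathbf{u}^{n+1}=\mathbb{P}\bigl[(\nabla X_n^{-1})^{T}\,\mathbf{u}_0\circ X_n^{-1}\bigr]$, whose curl is $\omega_0\circ X_n^{-1}$ and whose $L^2$ norm is controlled by $\|\nabla X_n^{-1}\|_{L^\infty}\|\mathbf{u}_0\|_{L^2}$.
\item The contraction should be run at the velocity level in $L^2$, not on the vorticity.
\end{itemize}
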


Now we assume that the initial function $\u^\epsilon_0$ in \eqref{icd} takes the following form:
\begin{equation}\label{u_0}
    \begin{aligned}
        \u^\epsilon_0(x,y)  \sim &
        \u_0 (x,y) +
    \sqrt{\epsilon}
    \u^{E,1}_0 (x,y) 
          + \sqrt{\epsilon} \left(u^{B,1}_0 (x,\frac{y}{\sqrt{\epsilon}}), v^{B,1}_0 (\frac{x}{\sqrt{\epsilon}},y)\right)^T \\
        & + \sqrt{\epsilon} \u^{C}_0(\frac{x}{\sqrt{\epsilon}}, \frac{y}{\sqrt{\epsilon}})
    + \epsilon \u^{E,2}_0 (x,y) + \epsilon \left(u^{B,2}_0(\frac{x}{\sqrt{\epsilon}},y), v^{B,2}_0(x,\frac{y}{\sqrt{\epsilon}})\right)^T +
    \ldots,
    \end{aligned}
\end{equation}
where
\begin{equation}\label{decay-asmp}
    \begin{aligned}
        & u^{B,1}_0(x,Y), \u_0^C(\frac{x}{\sqrt{\epsilon}}, Y), v^{B,2}_0(x,Y) \text{ decay rapidly to } 0 \text{ as } Y = \frac{y}{\sqrt{\epsilon}} \to \infty, \text{ and} \\
        & v^{B,1}_0(X,y), \u_0^C(X, \frac{y}{\sqrt{\epsilon}}), u^{B,2}_0(X,y) \text{ decay rapidly to } 0 \text{ as } X = \frac{x}{\sqrt{\epsilon}} \to \infty.
    \end{aligned}
\end{equation}

At the level of the general formal hierarchy, the initial expansion may contain the regular outer corrections $\sqrt{\epsilon}\,\u^{E,1}_0$ and $\epsilon\u^{E,2}_0$, together with further higher-order correctors represented by the dots in \eqref{u_0}. Since the purpose of this paper is to focus on the interaction of the two weak boundary layers and the associated corner-layer mechanism, for simplicity of presentation, we shall restrict ourselves to the well-prepared subclass
$\mathbf u^{E,1}_0=\mathbf u^{E,2}_0=0 $
and take all further initial correctors represented by the dots to vanish. Consequently, \(\mathbf u^{E,1}(t,x,y)\equiv0\), since it satisfies the homogeneous linearized Euler system displayed in Section \ref{sec-faa}. The profile \(\mathbf u^{E,2}\), however, is retained in the expansion, since its equation is forced and it does not vanish for positive times in general. Allowing a nonzero compatible initial value \(\u^{E,1}_0, \u^{E,2}_0\) would require no essential change in the argument, but we do not pursue this additional generality here. Accordingly, throughout the rest of the paper, we take the initial data to be exactly of the following form and require \eqref{cc-0} to hold for every sufficiently small $\epsilon$:
\begin{equation}\label{u_0-smp}
\begin{aligned}
        \u^\epsilon_0(x,y) = & \u_0(x,y)
+\sqrt{\epsilon}
\left(
u^{B,1}_0(x,\frac{y}{\sqrt{\epsilon}}),
v^{B,1}_0(\frac{x}{\sqrt{\epsilon}},y)
\right)^T \\
& + \sqrt{\epsilon}
\u^C_0\left(\frac{x}{\sqrt{\epsilon}},
\frac{y}{\sqrt{\epsilon}}\right)
+\epsilon
\left(
u^{B,2}_0(\frac{x}{\sqrt{\epsilon}},y),
v^{B,2}_0(x,\frac{y}{\sqrt{\epsilon}})
\right)^T,
\end{aligned}
\end{equation}
where the initial boundary layer profiles
$u_0^{B,1}, v_0^{B,1}, \u^C_0$ etc. decay fast as $Y=\frac{y}{\sqrt{\epsilon}}$, $X=\frac{x}{\sqrt{\epsilon}}$ go to infinity. 

For $x$ bounded away from zero, in the $y$-boundary-layer regime $Y=y/\sqrt{\epsilon}=O(1)$, the profiles depending on $X=x/\sqrt{\epsilon}$ decay as $\epsilon\to0$, and \eqref{u_0-smp} reduces to the usual half-plane expansion along $\{y=0\}$:
$$\u_0(x,y) +
\sqrt{\epsilon} \left(u^{B,1}_0(x,\frac{y}{\sqrt{\epsilon}}), 0\right) + \epsilon \left(0, v^{B,2}_0(x,\frac{y}{\sqrt{\epsilon}})\right),$$
which is the form of the leading expansions of solutions to the Navier-Stokes equation in the half-plane $\{y>0\}$, as studied in \cite{iftimie2011viscous}; by symmetry, the same holds for $\{x>0\}$. 

The main results of this paper are the following:
\begin{theorem}\label{thm-1}
    Let $\u^\epsilon_0$ be given by \eqref{u_0-smp}, satisfying the compatibility conditions \eqref{cc-0} and the decay assumption \eqref{decay-asmp},
    with
    $$
    \begin{aligned}
    & \u_0 \in H^4(\Q), \, \nabla \times \u_0 = 0 \text{ in a small neighborhood of }\mathbf{0}, \\
    & \langle Y \rangle^k u^{B,1}_0 \in H^2_xH^2_Y(\R_+)^2,\, \langle X \rangle^k v^{B,1}_0 \in H^2_y H^2_X(\R_+)^2 \quad \text{for any }k\in \mathbb{N},\\
    & \langle(X,Y)\rangle^k \u^C_0 \in H^1_{X,Y}(\R_+)^2 \text{ for any }k \in \mathbb{N},
    \end{aligned}
    $$
    where the notation $\langle Y \rangle=(1+Y^2)^{\frac 12}$. 
    Let $\u^\epsilon$ be the weak Leray solution of \eqref{INSE} on $\Q$ with \eqref{N-BC} and \eqref{icd}. 
    Then there exists $T>0$ such that there exist unique solutions $\u^{E,0} \in L^\infty(0,T; H^4(\Q))$ to \eqref{limit} with initial $\u_0$, boundary layer profiles $u^{B,1,2}, v^{B,1,1}$ satisfying
    \begin{equation*}
        \begin{aligned}
            u^{B,1,2}\in L^\infty(0,T; H^2_xL^2_Y(\R_+)^2) \cap L^2(0,T;H^2_xH^1_Y(\R_+)^2), \\
            v^{B,1,1}\in L^\infty(0,T; H^2_yL^2_X(\R_+)^2) \cap L^2(0,T;H^2_yH^1_X(\R_+)^2),
        \end{aligned}
    \end{equation*}
to the problems \eqref{B-1-2} and \eqref{B-1-1}
of linearized Prandtl equations with the initial data $u^{B,1,2}|_{t=0}=u_0^{B,1}(x,Y), v^{B,1,1}|_{t=0}=v_0^{B,1}(X,y)$ respectively,     a corner layer profile $$\u^{C,1} \in L^\infty(0,T;H^1_{X,Y}(\R_+)^2)$$
to the nonlinear problem \eqref{u-C,1} with with the initial data $\u^{C,1}|_{t=0}=\u_0^{C}(X,Y)$, and an out flow $\u^{E,2}\in L^\infty(0,T;H^1(\Q))$, such that the following expansion:
    \begin{equation}\label{expansion}
    \begin{aligned}
        \u^\epsilon\left(t,x,y\right)  = & \u^{E,0}\left(t,x,y\right) + \sqrt{\epsilon} \left(u^{B,1,2}(t,x,\frac{y}{\sqrt{\epsilon}}), v^{B,1,1}(t,\frac{x}{\sqrt{\epsilon}},y)\right)^T\\
        & + \sqrt{\epsilon} \u^{C,1}\left(t,\frac{x}{\sqrt{\epsilon}},\frac{y}{\sqrt{\epsilon}}\right) + \epsilon \u^{E,2}\left(t,x,y\right)+{\bf r}^{\epsilon}(t,x,y)
    \end{aligned}
    \end{equation}
    holds as $\epsilon \to 0$, with the error ${\bf r}^\epsilon$ being $O(\epsilon^{\frac{5}{4}})$ in $L^\infty(0,T;L^2(\Q))$ and $O(\epsilon^{\frac{3}{4}})$ in $L^2(0,T;H^1(\Q))$.
\end{theorem}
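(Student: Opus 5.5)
The plan has three parts: construct the profiles, build an approximate solution with a small residual, and close a relative energy estimate against the Leray solution.

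\textbf{Step 1: derive and solve the profile equations.} We plug the ansatz \eqref{ex-1}, refined into outer, edge and corner parts, into \eqref{INSE}--\eqref{N-BC} and collect powers of $\sqrt{\epsilon}$ in each regime.

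Away from the corner, the tangential components $u^{B,1,2}(t,x,Y)$ and $v^{B,1,1}(t,X,y)$ satisfy linearized Prandtl equations. Their transport coefficients are the traces of $\u^{E,0}$ on the edges, and the Robin condition $\partial_Y u^{B,1,2}=\big(2\alpha u^0+\partial_y u^0-\dots\big)|_{y=0}$ comes from the Navier condition at order $\epsilon^0$. For these linear problems we obtain existence and the weighted bounds $\langle Y\rangle^k$ by $L^2$ energy estimates with weights, then differentiate in $x$ twice using $\u^{E,0}\in H^4$ from Theorem~\ref{thm-Euler}.

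The corner profile $\u^{C,1}(t,X,Y)$ is defined as what remains after subtracting both edge layers. It satisfies a 2D Navier--Stokes-type system in the fast variables on the quarter plane, with zero-order coefficients given by the corner values of $\u^{E,0}$. Since $\u^0(t,\mathbf 0)=0$, the leading convection term is $\nabla\u^0(t,\mathbf0)(X,Y)\cdot\nabla$. The nonlinear terms are the self-interaction and the interaction with the traces of the edge layers. The system is forced by the mismatch of the edge layers at the corner and carries Navier conditions. Because the problem is effectively 2D Navier--Stokes with unit viscosity, we get a local (indeed uniform on $[0,T]$) $H^1$ solution by Galerkin plus an energy/enstrophy estimate. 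The linear growth of the coefficients is absorbed using $\operatorname{div}$-free structure and the weights $\langle (X,Y)\rangle^k$. Finally, we solve the forced linearized Euler problem for $\u^{E,2}$, which picks up the normal-velocity mismatch at order $\epsilon$. Its $H^1$ regularity again uses the vanishing vorticity near $\mathbf 0$ (Hodge estimates at the right angle).

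\textbf{Step 2: build the approximate solution and bound its residual.} Set $\u^a$ to be the truncated expansion. Add small correctors (e.g.\ the $\epsilon\,u^{B,2},v^{B,2}$ normal components, together with cutoffs) so that $\u^a$ is exactly divergence-free and tangent to $\partial\Q$. Compute $R^\epsilon=\partial_t\u^a+\u^a\cdot\nabla\u^a+\nabla p^a-\epsilon\Delta\u^a$ together with the boundary residual in the Navier condition. Each term that is a function of a fast variable gains $\epsilon^{1/4}$ per fast direction in $L^2$. Tracking the orders, we aim for
\[
\|R^\epsilon\|_{L^2(0,T;L^2)}\lesssim \epsilon^{5/4}
\]
(modulo a part written as $\epsilon\,\mathrm{div}$ of an $O(\epsilon^{1/4})$-in-$L^2$ tensor), and a boundary residual of size $O(\epsilon^{3/4})$ in $L^2(\partial\Q)$ multiplied by $\epsilon$.

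\textbf{Step 3: close the energy estimate for the remainder.} Test the weak formulation against $\u^a$ and use the energy inequality \eqref{NS-ineq}, in the Iftimie--Sueur relative-energy style, to get for $\mathbf r=\u^\epsilon-\u^a$
\[
\tfrac{d}{dt}\|\mathbf r\|^2+\epsilon\|\nabla\mathbf r\|^2\le C\|\nabla\u^a\|_{L^\infty}\|\mathbf r\|^2+|\langle R^\epsilon,\mathbf r\rangle|+\epsilon|\text{bdry}|.
\]
Here $\|\nabla\u^a\|_{L^\infty}$ is bounded uniformly because the layers are $\sqrt\epsilon\,U(x/\sqrt\epsilon)$. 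The boundary terms are handled via the trace inequality, and Gronwall then yields $\|\mathbf r\|_{L^\infty L^2}\lesssim\epsilon^{5/4}$ and $\epsilon^{1/2}\|\nabla \mathbf r\|_{L^2L^2}\lesssim\epsilon^{5/4}$, i.e.\ $O(\epsilon^{3/4})$ in $L^2H^1$.

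\textbf{Main obstacle.} I expect the hardest part to be Step 1 for the corner layer: obtaining weighted decay estimates of $\u^{C,1}$ that are uniform on a fixed $[0,T]$. The difficulty is the linearly growing coefficient $\nabla\u^0(\mathbf0)(X,Y)$ combined with the nonlinearity. I would first try weighted energy estimates exploiting the fact that $\nabla\u^0(\mathbf 0)$ is traceless, hence a strain plus rotation, which controls the weight commutators. The second-hardest part is verifying that the residual terms involving $\u^{C,1}$, which has only $H^1$ regularity, still fit into $\epsilon^{5/4}$. To handle this, we would put them in divergence form and move derivatives onto $\mathbf r$.
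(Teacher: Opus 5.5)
Your architecture matches the paper's: an exactly divergence-free, tangent $\u^a$ (the $\epsilon(u^{B,2,1},v^{B,2,2})$ correctors suffice, so no cutoffs are needed), a relative-energy estimate against the Leray energy inequality, and Gronwall. The genuine gap is in Step 3, where $\|\nabla\u^a\|_{L^\infty}$ is \emph{not} uniformly bounded.

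\textbf{The $L^\infty$ claim fails.} For the corner term, $\nabla_{x,y}\bigl[\sqrt\epsilon\,\u^{C,1}(t,x/\sqrt\epsilon,y/\sqrt\epsilon)\bigr]=(\nabla_{X,Y}\u^{C,1})(t,x/\sqrt\epsilon,y/\sqrt\epsilon)$, so its sup norm is $\|\nabla_{X,Y}\u^{C,1}(t)\|_{L^\infty}$. Your Step~1, like the theorem, only gives $\u^{C,1}\in L^\infty(0,T;H^1)$ from $H^1$ data, with mixed inhomogeneous data at a corner. So this bound is unavailable, already at $t=0$ for general $\u^C_0$. The same applies to $\epsilon\nabla\u^{E,2}$ with $\u^{E,2}\in H^1$. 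Even for the edge layers, $\|\dY u^{B,1,2}\|_{L^\infty}$ does not follow from the weighted $L^2$ and $\dx$-estimates you describe; the paper proves it separately by a maximum principle for $\dY u^{B,1,2}$.

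The repair is the paper's. Split $\u^a=\u^{E,0}+\sqrt\epsilon(u^{B,1,2},v^{B,1,1})+\tilde{\u}^a$ and use $L^\infty$ bounds only for the first two pieces. For the rest, $\|\nabla\tilde{\u}^a\|_{L^\infty_tL^2}=O(\sqrt\epsilon)$, since $\|\nabla_{x,y}[\sqrt\epsilon\,\u^{C,1}(\cdot/\sqrt\epsilon)]\|_{L^2}=\sqrt\epsilon\,\|\nabla_{X,Y}\u^{C,1}\|_{L^2}$. Combined with Ladyzhenskaya's inequality $\|\mathbf r\|_{L^4}^2\lesssim\|\mathbf r\|\,\|\mathbf r\|_{H^1}$, this gives $|\langle(\mathbf r\cdot\nabla)\tilde{\u}^a,\mathbf r\rangle|\le\frac{\epsilon}{10}\|\nabla\mathbf r\|^2+C\|\mathbf r\|^2$.

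\textbf{Zero flux of the edge traces.} The corner problem, and likewise the lifting for $\u^{E,2}$, has a finite-energy solution only if the edge traces carry no flux. A divergence-free field on $\Q$ whose normal boundary values have nonzero net flux has nonzero flux through every large arc. Hence $\int_{\Q\cap\{|(X,Y)|=R\}}|\u^{C,1}|^2\gtrsim 1/R$, and $\u^{C,1}\notin L^2$. The paper secures $\int_0^\infty\overline{u^{B,1,2}}\,dY=\int_0^\infty\underline{v^{B,1,1}}\,dX=0$ for all $t$. It gets this from Remark~\ref{rmk-1} and the $Y$-integrated trace equation \eqref{u-b,1,2}, namely $\frac{d}{dt}\int_0^\infty\overline{u^{B,1,2}}\,dY+2\widehat{\dx u^{E,0}}\int_0^\infty\overline{u^{B,1,2}}\,dY=0$. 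Your Galerkin/energy plan for $\u^{C,1}$ needs this and does not mention it.

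\textbf{Residual sizes.} Your residual sizes are off exactly where the rate is decided.
\begin{itemize}
\item For $\alpha\neq0$, the Navier residual of $\u^a$ on $\{x=0\}$ contains $-2\alpha\sqrt\epsilon\,(v^{B,1,1}+v^{C,1})|_{x=0}$, because no order-$\epsilon$ tangential edge layer is retained. This is $O(\epsilon^{1/2})$ in $L^2(\partial\Q)$, not $O(\epsilon^{3/4})$. With the plain trace bound it only gives $\|\mathbf r\|\lesssim\epsilon$. You need the interpolated trace inequality $\|\mathbf r\|^2_{L^2(\partial\Q)}\lesssim\|\mathbf r\|\,\|\mathbf r\|_{H^1}$, or the paper's device of extending $b^\epsilon$ into $\Q$ and using the divergence theorem. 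Either yields exactly $\epsilon^{5/2}$ in the Gronwall source.
\item An $\epsilon\nabla\cdot T$ residual with $\|T\|=O(\epsilon^{1/4})$ would only contribute $\epsilon\|T\|^2=\epsilon^{3/2}$. What is needed, and true, is $\|T\|=O(\epsilon^{3/4})$, which is the paper's $\|F^{\epsilon,\sharp}_{1,0}\|_{H^{-1}}=O(\epsilon^{3/4})$.
\end{itemize}

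\textbf{Weighted decay of $\u^{C,1}$.} The matrix $\nabla\u^{E,0}(t,\mathbf 0)$ is a pure strain (Lemma~\ref{lemma-symm}), not strain plus rotation. Weighted velocity estimates would also put the weight on $p^{C,2}$. The paper avoids this by weighting the vorticity equation and recovering $\langle (X,Y)\rangle^{k}\nabla\u^{C,1}$ through Kozlov's weighted elliptic estimates. Those estimates also handle terms such as $v^{C,1}\dY^2\overline{u^{B,1,2}}$, which decay only in $Y$.
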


\begin{remark}\label{rmk-1}
Equating the coefficients of the resulting powers of $\sqrt{\epsilon}$ in \eqref{cc-0}, and using \eqref{decay-asmp}, yield
\begin{enumerate}
    \item[(1)] $\nabla \cdot \u_0 = \nabla \cdot \u^C_0 = 0$;
    \item[(2)] $u^{B,2}_0(X,y) = \int_X^\infty \dy v^{B,1}_0(X',y) \, \d X', \quad v^{B,2}_0(x,Y) = \int_Y^\infty \dx u^{B,1}_0(x,Y') \, \d Y'$;
    \item[(3)] ${u_0}_{|x=0} = {v_0}_{|y=0} = 0$, ${u^{B,2}_0}_{|X=0} = {v_0^{B,2}}_{|Y=0} = 0$ 
    ; 
    \item[(4)] ${u^{B,1}_0}_{|x=0} + {u^{C}_0}_{|X=0} = 0$, ${v^{B,1}_0}_{|y=0} + {v^{C}_0}_{|Y=0} = 0$.
\end{enumerate}
Moreover, (1)-(4) necessarily imply that
\begin{equation*}
    \int_0^\infty {u^{B,1}_0}(0,Y) \,\d Y + \int_0^\infty {v^{B,1}_0}(X,0) \,\d X = 0,
\end{equation*}
and
\begin{equation*}
    \dx \left(\int_0^\infty  u^{B,1}_0(x, Y) \,\d Y \right) = \dy  \left(\int_0^\infty  v^{B,1}_0(X,y) \,\d X \right) = 0.
\end{equation*}
Such nontrivial initial datum $u^{B,1}_0, v^{B,1}_0, \u^C_0$ satisfying the compatibility conditions do exist. For example, one can take $u^{B,1}_0 = x (1-2Y^2)e^{-(x+Y^2)}$, $v^{B,1}_0 = y (1-2X^2)e^{-(y+X^2)}$, and $\u^C_0 = \nabla^\perp \left(XY e^{-(X^2+Y^2)}\right)$.
\end{remark}
\begin{remark}
    The assumption that the initial vorticity vanishes near the corner is only for obtaining the required regularity of the outer Euler profile in the critical right-angle domain. Once a sufficiently regular Euler flow,  e.g. $\u^{E,0}\in L^\infty(0, T; W^{2,\infty}(\Q)\cap H^3(\Q))$), is known, all remaining calculation in this paper works without this assumption.
\end{remark}
\begin{remark}\label{rmk-2}
    For zero initial corner datum, the dynamical generation of the leading corner profile is governed by the two traces $\overline{u^{B,1,2}}$ and $\underline{v^{B,1,1}}$. In the right-angle domain considered here, when $\mathbf u_0^\epsilon=\mathbf u_0(x,y)$, the initial boundary-layer profiles vanish, and the trace problem \eqref{u-b,1,2}, together with its symmetric counterpart, has zero initial and boundary data. Hence uniqueness gives $\overline{u^{B,1,2}}=\underline{v^{B,1,1}}=0$, and consequently $\mathbf u^{C,1}\equiv0$. For a general opening angle, in particular when nonzero Euler vorticity reaches the corner, the trace equations are coupled through the corner geometry, and these traces need not vanish even for viscosity-independent initial data; thus a nontrivial leading corner layer may be generated dynamically. This mechanism will be studied in forthcoming work.
\end{remark}

Noticing that the vorticity equation of \eqref{INSE} simply reads
\begin{equation*}
    \dt \omega^\epsilon + \u^\epsilon\cdot \nabla \omega^\epsilon - \epsilon \Delta \omega^\epsilon = 0,
\end{equation*}
where $\omega^\epsilon = \nabla \times \u^\epsilon$.
When $\alpha = 0$, the boundary conditions for $\omega^{\epsilon}$ are homogeneous: $\omega^\epsilon_{|x=0} = \omega^\epsilon_{|y=0} = 0$.
The uniform estimate then follows directly from the standard energy method, combined with the two-dimensional Hodge-type estimate (which is obtained by a reflection argument and Fourier transformations):
\begin{equation}\label{Hodge}
    \|\u^\epsilon\|_{H^1(\Q)} \lesssim \|\u^\epsilon\|_{L^2(\Q)} + \|\omega^\epsilon\|_{L^2(\Q)},
\end{equation}
valid for divergence-free vector fields tangent to the boundary of $\Q$.
\begin{theorem}\label{thm-2}
    Let $\u_0 \in H^1(\Q)$ be a divergence-free vector field tangent to the boundary. Set $\u_0^\epsilon=\u_0$ for every $\epsilon\in(0,1]$. If $\alpha = 0$, then there exists a unique solution $\u^\epsilon$ to \eqref{INSE}-\eqref{icd} such that the following estimate holds:
    \begin{equation*}
        \sup_{t\in [0,T]} \|\u^\epsilon(t)\|^2_{H^1(\Q)} + \epsilon \int_0^T \|\nabla\times \u^\epsilon(t)\|^2_{H^1(\Q)} \mathrm{d} t \le C\|\u_0\|^2_{H^1(\Q)}.
    \end{equation*}
    Moreover, for every sequence $\epsilon\to0$, there exists a subsequence (not relabeled) and a weak solution $\u$ of \eqref{limit} such that $\u^\epsilon\rightharpoonup^*\u$ in $L^\infty(0,T;H^1(\Q))$ and $\u^\epsilon\to\u$ strongly in $L^2(0,T;L^2_{\mathrm{loc}}(\Q))$.
\end{theorem}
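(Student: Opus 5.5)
The plan is a vorticity energy method at the $H^1$ level, closed by the Hodge estimate \eqref{Hodge}, followed by a compactness argument. The first step is the boundary conditions in the case $\alpha=0$. On $\{y=0\}$ we have $v^\epsilon=0$, so $\dx v^\epsilon=0$ there. The tangential stress condition gives $\dy u^\epsilon+\dx v^\epsilon=0$, hence $\dy u^\epsilon=0$ and $\omega^\epsilon=\dx v^\epsilon-\dy u^\epsilon=0$ on $\{y=0\}$. Symmetrically $\omega^\epsilon=0$ on $\{x=0\}$. The vorticity therefore satisfies a transport--diffusion equation with homogeneous Dirichlet data on $\partial\Q$.

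For existence and uniqueness I would use a Galerkin or smooth approximation. One could take eigenfunctions of the Stokes operator with these boundary conditions, or regularize and use the reflection structure: odd/even reflections across both edges turn the problem into a whole-plane problem with the corresponding symmetry. Uniqueness follows from the uniqueness of Leray--Hopf solutions quoted above, since the solution constructed here lies in the energy class.

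The a priori estimates come next. The $L^2$ energy identity gives $\|\u^\epsilon(t)\|_{L^2}^2+2\epsilon\int_0^t\|D(\u^\epsilon)\|_{L^2}^2=\|\u_0\|_{L^2}^2$, because the boundary term vanishes when $\alpha=0$. For the vorticity we multiply its equation by $\omega^\epsilon$ and integrate over $\Q$. The convection term vanishes, since $\nabla\cdot\u^\epsilon=0$ and $\u^\epsilon\cdot\vec n=0$. The diffusion term integrates by parts without boundary contribution because $\omega^\epsilon|_{\partial\Q}=0$. This yields
\[
\|\omega^\epsilon(t)\|_{L^2}^2+2\epsilon\int_0^t\|\nabla\omega^\epsilon\|_{L^2}^2=\|\omega_0\|_{L^2}^2 .
\]
Combining this with \eqref{Hodge} gives $\sup_t\|\u^\epsilon\|_{H^1}^2\lesssim\|\u_0\|_{L^2}^2+\|\omega_0\|_{L^2}^2\lesssim\|\u_0\|_{H^1}^2$. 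Since $\omega^\epsilon\in H^1_0(\Q)$, the norms $\|\omega^\epsilon\|_{H^1}$ and $\|\omega^\epsilon\|_{L^2}+\|\nabla\omega^\epsilon\|_{L^2}$ are equivalent, which gives the dissipative term in the statement.

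The main obstacle is making these computations rigorous. We need enough regularity of the approximate solutions near the corner to justify the integration by parts and the vanishing vorticity trace. This is why I would perform all estimates on the approximations, where the reflection makes them smooth, and pass to the limit using lower semicontinuity.

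For the inviscid limit, the uniform bound gives a weak-$*$ limit $\u$ in $L^\infty(0,T;H^1(\Q))$ along a subsequence. From the equation tested against divergence-free test fields, $\dt\u^\epsilon$ is bounded in $L^\infty(0,T;H^{-1}_{\mathrm{loc}})$ modulo gradients. The Aubin--Lions lemma on bounded subdomains, combined with a diagonal argument, then gives strong convergence in $L^2(0,T;L^2_{\mathrm{loc}}(\Q))$. This is enough to pass to the limit in the nonlinear term $\u^\epsilon\otimes\u^\epsilon$ against compactly supported test functions. The viscous term $\epsilon\Delta\u^\epsilon$ tends to $0$ in distributions. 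The divergence-free and tangency conditions persist by weak continuity of the trace, so $\u$ is a weak solution of \eqref{limit}.
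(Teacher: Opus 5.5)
Your proposal is correct and follows the paper's own (very brief) argument: homogeneous Dirichlet data for $\omega^\epsilon$ when $\alpha=0$, the $L^2$ energy identities for $\u^\epsilon$ and $\omega^\epsilon$, and the Hodge estimate \eqref{Hodge}, with the even--odd reflection to $\R^2$ justifying the computations near the corner. The paper gives no argument at all for the convergence part, and your compactness sketch is the standard one, with one caveat: a bound on $\partial_t\u^\epsilon$ only modulo gradients does not feed directly into Aubin--Lions on a bounded subdomain, because pairing against divergence-free test fields there does not see gradients of harmonic functions. To fix this, either estimate the pressure---easy after reflection, since $\tilde{\u}^\epsilon\otimes\tilde{\u}^\epsilon$ is bounded in $L^\infty(0,T;L^2(\R^2))$ and hence $\partial_t\tilde{\u}^\epsilon$ is bounded in $L^\infty(0,T;H^{-1}(\R^2))$---or use the global divergence-free and tangency constraints on $\Q$ to obtain the required Ehrling-type inequality.
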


The paper is organized as follows. In Section 2, by using multi-scale analysis, we derive the asymptotic expansion \eqref{expansion} of the solution $\u^\epsilon$ to the incompressible Navier-Stokes equation \eqref{INSE} with Navier boundary condition \eqref{N-BC} in a quarter plane in the small viscosity limit. 
In Sections 3 and 4, we study the well-posedness of the Euler flow problem in the quarter plane, the boundary-layer problems on both sides of the quarter, and the corner layer nonlinear problem of the Navier-Stokes type equations in the quarter plane in fast variables, respectively. In particular, we use Kozlov's weighted elliptic estimate to show the asymptotic behavior of $\u^{C,1}$ as $X, Y \to \infty$.
The asymptotic expansion of the small viscosity limit is rigorously justified by studying the remainder problem in Section 5. Finally, in the Appendix, we give some elementary estimates of boundary value problems of the Laplace equation in a quarter, which is used in previous sections.

\section{Formal asymptotic analysis}\label{sec-faa}

First in this section, we study the asymptotic behavior of solutions to the problem \eqref{INSE}-\eqref{icd} in the small viscosity limit via multi-scale analysis. Inspired by the boundary layer theory introduced by Prandtl in \cite{prandtl}, take the following ansatz for the solution of \eqref{INSE}-\eqref{icd}:
\begin{equation}\label{ansatz}
    \begin{aligned}
        (\u^\epsilon,p^\epsilon)(t,x,y) =
        & \sum_{k\ge 0} \epsilon^{\frac{k}{2}} \bigg((\u^{E,k},p^{E,k})(t,x,y) + (\u^{B,k,1},p^{B,k,1})(t,\frac{x}{\sqrt{\epsilon}},y) \\
        &
        + (\u^{B,k,2},p^{B,k,2})(t,x,\frac{y}{\sqrt{\epsilon}})  + (\u^{C,k},p^{C,k})(t,\frac{x}{\sqrt{\epsilon}},\frac{y}{\sqrt{\epsilon}}) \bigg).
    \end{aligned}
\end{equation}
All boundary-layer and corner-layer profiles decay rapidly as $X = \frac{x}{\sqrt{\epsilon}} \to +\infty$ and $Y = \frac{y}{\sqrt{\epsilon}} \to +\infty$, respectively.


\vspace{.1in}
\noindent\textbf{\underline{(1) Div-free matching}} 

Plugging \eqref{ansatz} into the divergence-free condition given in \eqref{INSE}, it follows that the vanishing of the order $O(\epsilon^{-\frac 12})$ implies
\begin{equation}\label{div-B,0}
    \partial_X u^{B,0,1}(t,\frac{x}{\sqrt{\epsilon}},y) + \partial_Y v^{B,0,2}(t,x,\frac{y}{\sqrt{\epsilon}})
    + \partial_X u^{C,0}(t,\frac{x}{\sqrt{\epsilon}},\frac{y}{\sqrt{\epsilon}}) + \partial_Y v^{C, 0}(t,\frac{x}{\sqrt{\epsilon}},\frac{y}{\sqrt{\epsilon}})
    =0,
\end{equation}
and for each $k\ge 0$,
the vanishing of the order $O(\epsilon^{\frac k2})$ implies
\begin{equation}\label{div-B,k}
\begin{array}{ll}
&\partial_xu^{E,k}(t,x,y)+\partial_X u^{B,k+1,1}(t,\frac{x}{\sqrt{\epsilon}},y)+
\partial_xu^{B,k,2}(t,x,\frac{y}{\sqrt{\epsilon}})+\partial_X u^{C,k+1}(t,\frac{x}{\sqrt{\epsilon}},\frac{y}{\sqrt{\epsilon}})\\[2mm]
&
+\partial_yv^{E,k}(t,x,y)+\partial_y v^{B,k,1}(t,\frac{x}{\sqrt{\epsilon}},y)+
\partial_Yv^{B,k+1,2}(t,x,\frac{y}{\sqrt{\epsilon}})+\partial_Y v^{C,k+1}(t,\frac{x}{\sqrt{\epsilon}},\frac{y}{\sqrt{\epsilon}})=0
\end{array}
\end{equation}

For any fixed $ y>0~(x>0\; {\rm resp.})$, letting $\epsilon\to 0$ in \eqref{div-B,0}, it follows that
\begin{equation}\label{2.4}
\partial_X u^{B,0,1}(t,X,y)=0
\quad (\partial_Y v^{B,0,2}(t,x,Y)=0 \quad {\rm resp.}),\end{equation}
and 
\begin{equation}\label{2.5}\partial_X u^{C,0}(t,X, Y) + \partial_Y v^{C, 0}(t,X, Y)
    =0.
\end{equation}

From \eqref{2.4}, we get immediately that
\begin{equation}\label{2,6}
u^{B,0,1}(t,X,y)\equiv 0,
\quad  v^{B,0,2}(t,x,Y)\equiv 0\end{equation}
by using the hypothesis of rapidly decreasing property of these two functions in $X\to+\infty$ and
$Y\to+\infty$ respectively.

In the same way as above, from \eqref{div-B,k} we can deduce that
\begin{equation}\label{div-E/C}
    \nabla_{x,y} \cdot \u^{E,k} = 0 \quad (k\ge 0), \qquad \nabla_{X,Y} \cdot \u^{C,k+1} = 0 \quad (k\ge 0),
\end{equation}
and
\begin{equation}\label{div-B-k}
    \begin{aligned}
        &\left(\partial_X u^{B,k+1,1} + \partial_y v^{B,k,1}\right) (t,X,y) = 0,\\
        &\left(\partial_x u^{B,k,2} + \partial_Y v^{B,k+1,2}\right) (t,x,Y) = 0.
    \end{aligned}
\end{equation}

\vspace{.1in}
\noindent\textbf{\underline{(2) Boundary conditions matching}} 

It is easy to see that the boundary conditions \eqref{N-BC} are equivalent to
\begin{equation}\label{BC-1}
    {u^\epsilon}_{|x=0} = 0, \quad {v^\epsilon}_{|y=0} = 0,
\end{equation}
\begin{equation}\label{BC-2}
    {\left(\dx v^\epsilon - 2\alpha v^\epsilon\right)}_{|x=0} = 0, \quad {\left(\dy u^\epsilon - 2\alpha u^\epsilon\right)}_{|y=0} = 0.
\end{equation}
Plugging \eqref{ansatz} into \eqref{BC-1} gives
\begin{equation}\label{BC1-x}
    \left({u^{E,k}}_{|x=0} + {u^{B,k,1}}_{|X=0}\right)(t,y) = 0, \quad
    \left({u^{B,k,2}}_{|x=0} + {u^{C,k}}_{|X=0}\right)(t,Y) = 0,
\end{equation}
and
\begin{equation}\label{BC1-y}
    \left({v^{E,k}}_{|y=0} + {v^{B,k,2}}_{|Y=0}\right)(t,x) = 0, \quad
    \left({v^{B,k,1}}_{|y=0} + {v^{C,k}}_{|Y=0}\right)(t,X) = 0.
\end{equation}

Secondly,  by plugging \eqref{ansatz} into  \eqref{BC-2},
the vanishing of the order $O(\epsilon^{-\frac 12})$  implies
$$
(\partial_X v^{B, 0, 1})(t,0,y)+(\partial_X v^{C,0})(t, 0, Y)=0, 
\quad
(\partial_Y u^{B, 0, 2})(t,x,0)+(\partial_Y u^{C,0})(t, X, 0)=0, 
$$
which gives
\begin{equation}\label{BC2-1}
    \begin{aligned}
         {\partial_X v^{B,0,1}}_{|X=0}(t,y) = 0, \quad {\partial_X v^{C,0}}_{|X=0}(t,Y) = 0,\\
         {\partial_Y u^{B,0,2}}_{|Y=0}(t,x) = 0, \quad {\partial_Y u^{C,0}}_{|Y=0}(t,X) = 0,
    \end{aligned}
\end{equation}
and for each $k\ge0$, the vanishing of the order $O(\epsilon^{\frac k2})$ implies
\begin{equation}\label{BC2-k}
    \begin{aligned}
        x>0:\quad &({\partial_y u^{E,k}}_{|y=0} + {\partial_Y u^{B,k+1,2}}_{|Y=0})(t,x) = 2\alpha \left({u^{E,k}}_{|y=0} + {u^{B,k,2}}_{|Y=0}\right)(t,x),\\
        X>0: \quad &({\partial_y u^{B,k,1}}_{|y=0} + {\partial_Y u^{C,k+1}}_{|Y=0})(t,X) = 2\alpha \left({u^{B,k,1}}_{|y=0} + {u^{C,k}}_{|Y=0}\right)(t, X),\\
        y>0: \quad &({\partial_x v^{E,k}}_{|x=0} + {\partial_X v^{B,k+1,1}}_{|X=0})(t,y) = 2\alpha \left({v^{E,k}}_{|x=0} + {v^{B,k,1}}_{|X=0}\right)(t,y),\\
        Y>0: \quad &({\partial_x v^{B,k,2}}_{|x=0} + {\partial_X v^{C,k+1}}_{|X=0})(t, Y) = 2\alpha \left({v^{B,k,2}}_{|x=0} + {v^{C,k}}_{|X=0}\right)(t, Y).
    \end{aligned}
\end{equation}

\vspace{.1in}
\noindent\textbf{\underline{(3) Problems of $\u^{E,k}$}}

Plugging \eqref{ansatz} into the momentum equation in \eqref{INSE}, and
letting $X,Y \to +\infty$, it gives
\begin{equation}\label{E,0}
    \partial_t \u^{E,0} + (\u^{E,0} \cdot \nabla) \u^{E,0} + \nabla p^{E,0} = 0,
\end{equation}
and for $k\ge 1$,
\begin{equation}\label{E,k}
    \partial_t \u^{E,k} + (\u^{E,0} \cdot \nabla) \u^{E,k} + (\u^{E,k} \cdot \nabla) \u^{E,0} + \nabla p^{E,k}
    = \begin{cases}
    0, & k=1,\\
    \Delta \u^{E,k-2}, & k=2,3,\\
    \Delta \u^{E,k-2} - \displaystyle\sum_{j=2}^{k-2} (\u^{E,j} \cdot \nabla) \u^{E,k-j}, & k\ge 4.
    \end{cases}
\end{equation}

From \eqref{2,6}, \eqref{BC1-x} and \eqref{BC1-y}, we deduce that the leading order outer solution $\u^{E,0}$ satisfies the incompressible Euler equation \eqref{E,0}-\eqref{div-E/C} with no-penetration boundary conditions
\begin{equation}\label{2.18}
    \u^{E,0}\cdot {\vec n}=0\quad {\rm on}\quad \partial\Q\setminus \{(0,0)\}.
\end{equation}

Before deriving the problems of boundary layers near $\{x=0\}$, $\{y=0\}$, and corner layer near $\{x=y=0\}$, we introduce notations:

\begin{notation}
(1) For functions $f(t,x,\cdot)$ and $f(t,X,\cdot)$ defined in $\{x\ge 0\}$ and $\{X\ge 0\}$ respectively, denote by 
$\overline{f}(t,\cdot) = {f}(t,0,\cdot)$ the trace
on $\{x=0\}$ and $\{X=0\}$ respectively.
and for $f(t,\cdot, y)$ and $f(t,\cdot, Y)$ defined in $\{y\ge 0\}$ and $\{Y\ge 0\}$ respectively, denote by 
$\underline{f}(t,\cdot) = f(t,\cdot, 0)$
 the trace on $\{y=0\}$ and $\{Y=0\}$ respectively.

(2) For a function $f$ defined in $\{x\ge 0, y\ge 0\}$, denote by $\hat{f}$ the value of $f$ at the corner $\{x=y=0\}$.

(3) For any $k \in \mathbb{N}$,
denote by 
\begin{equation}\label{b-1}
\begin{cases}   
(\u^{P,k,1},p^{P,k,1})(t,X,y) = (\u^{B,k,1},p^{B,k,1})(t,X,y) + \sum_{l=0}^k \frac{X^l}{l!} (\overline{\partial_x^l \u^{E,k-l}},\overline{\partial_x^l p^{E,k-l}})(t,y)\\[2mm]
(\u^{P,k,2},p^{P,k,2})(t,x,Y) = (\u^{B,k,2},p^{B,k,2})(t,x,Y) + \sum_{l=0}^k \frac{Y^l}{l!} (\underline{\partial_y^l \u^{E,k-l}},\underline{\partial_y^l p^{E,k-l}})(t,x),\\[2mm]
(\u^{G,k}, p^{G,k}) (t,X, Y)= (\u^{C,k}, p^{C,k})(t,X,Y) + \sum_{l_2 = 0}^k \frac{Y^{l_2}}{l_2!} \underline{\partial_y^{l_2} (\u^{B,k-l_2,1}, p^{B,k-l_2,1})} (t,X) \\[2mm] 
\hspace{1.6in}+ \sum_{l_1 = 0}^k \frac{X^{l_1}}{l_1!} \overline{\partial_x^{l_1} (\u^{B,k-l_1,2}, p^{B,k-l_1,2})} (t,Y) 
    \\[2mm]
    \hspace{1.6in}
    + \sum_{l_1+l_2 = 0}^k \frac{X^{l_1}Y^{l_2}}{l_1!l_2!} { \partial_x^{l_1}\partial_y^{l_2} (\u^{E,k-(l_1+l_2)}, p^{E,k-(l_1+l_2)})}(t,0,0),
\end{cases}
\end{equation}
the $O(\epsilon^{\frac k2})-$order boundary layer profiles near $\{x=0\}$, $\{y=0\}$ respectively, and  corner layer profiles near $\{x=y=0\}$.

\end{notation}

\noindent\textbf{\underline{(4) Equations of $p^{B,k,1},p^{B,k,2}$}}

Plugging the ansatz \eqref{ansatz} into the first component equation of \eqref{INSE}, and noting  that for any fixed $y \ge \delta >0$, $\epsilon \to 0$ implies $Y=\frac{y}{\sqrt{\epsilon}} \to \infty$, so one gets
\begin{equation}\label{ae-2}
\begin{aligned}
    & \sum_{k\ge -1} \epsilon^{\frac{k}{2}} 
        \left(\partial_t u^{P,k,1} + \sum_{j=1}^{k} u^{P,j,1} \partial_X u^{P,k+1-j,1} + \sum_{j=0}^{k} v^{P,j,1} \partial_y u^{P,k-j,1} + \partial_X p^{P,k+1,1} \right)\\
        = & \sum_{k\ge -1} \epsilon^{\frac{k}{2}}  \left(\partial_X^2 u^{P,k,1} + \partial_y^2 u^{P,k-2,1} \right),
\end{aligned}
\end{equation}
where by convention, we set $(\u^{P,j,1},p^{P,j,1}) = 0$ as $j\le -1$.

The vanishing of the orders $O(\epsilon^{-1/2}), O(\epsilon^0)$ in \eqref{ae-2} gives $\partial_X p^{B,0,1} = 0$ and $\partial_X p^{B,1,1} = - \overline{\partial_x p^{E,0}} = 0$ by using $u^{B,0,1}(t,X,y)= u^{E,0}(t,0,y)= 0$, which immediately implies
\begin{equation}\label{2.20}
    p^{B,0,1} = p^{B,1,1} \equiv 0.
\end{equation}

For each $k\ge 1$, the vanishing of the order $O(\epsilon^{k/2})$ in \eqref{ae-2} implies that $p^{B,k+1,1}(t,X,y)$ can be determined from
\begin{equation}\label{p-k-x}
\begin{aligned}
    \partial_X p^{P,k+1,1}   = & 
        \partial_X^2 u^{P,k,1} + \partial_y^2 u^{P,k-2,1} - 
        \dt u^{P,k,1} \\
        & - \sum_{j=1}^{k} u^{P,j,1} \dX u^{P,k+1-j,1} - \sum_{j=0}^{k-1} v^{P,j,1} \dy u^{P,k-j,1},
\end{aligned}
\end{equation}
once $\{u^{P,j,1}\}_{j\le k}$ and $\{v^{P,j,1}\}_{j\le k-1}$ are known.

Similarly, by considering the behavior of the equation
\eqref{INSE} near $\{y=0\}$, one can derive that 
\begin{equation}\label{2.22}
p^{B,0,2} = p^{B,1,2} \equiv 0,
\end{equation}
and for $k\ge 1$, one has
\begin{equation}\label{p-k-y}
\begin{aligned}
    \partial_Y p^{P,k+1,2}  = & 
        \partial_y^2 v^{P,k,2} + \partial_X^2 v^{P,k-2,2} - 
        \dt v^{P,k,2} \\
        & - \sum_{j=1}^{k} v^{P,j,2} \dY u^{P,k+1-j,2} - \sum_{j=0}^{k-1} u^{P,j,2} \dx v^{P,k-j,2}.
\end{aligned}
\end{equation}

\vspace{.1in}
\textbf{\underline{(5) Problems of boundary layer profiles}}

Plugging the ansatz \eqref{ansatz} into the first component equation of \eqref{INSE}, and noting  that for $x \ge \delta >0$, $\epsilon \to 0$ implies $X=\frac{x}{\sqrt{\epsilon}} \to \infty$, so one gets
\begin{equation}\label{ae-3}
\begin{aligned}
    & \sum_{k\ge 0} \epsilon^{\frac{k}{2}} \left(\partial_t u^{P,k,2} + \sum_{0\le j\le k} u^{P,j,2} \partial_x u^{P,k-j,2} + \sum_{0\le j\le k+1} v^{P,j,2} \partial_Y u^{P,k+1-j,2}
     +\partial_x p^{P,k,2}\right) \\
     = & \sum_{k\ge 0} \epsilon^{\frac{k}{2}} \left(\partial_Y^2 u^{P,k,2} + \partial_x^2 u^{P,k-2,2}\right).
    \end{aligned}
\end{equation}
The vanishing of the order $O(\epsilon^0)$ in \eqref{ae-3} gives rise to that the leading order boundary layer profile $(u^{P,0,2}(t,x,Y), v^{P,1,2}(t,x,Y))$ near $\{y=0\}$ satisfies the following problem of the Prandtl equation in $\{t>0, x>0, Y>0\}$: 
\begin{equation}\label{Prandtl-0-2}
    \begin{cases}
        \partial_t u^{P,0,2} + u^{P,0,2} \partial_x u^{P,0,2} + v^{P,1,2}\partial_Y u^{P,0,2} + \partial_x p^{P,0,2} = \partial_Y^2 u^{P,0,2},\quad  x,Y>0,\\
        \partial_x u^{P,0,2} + \partial_Y v^{P,1,2} = 0,\quad  x,Y>0,\\
        {\partial_Y u^{P,0,2}}_{|Y=0} = {v^{P,1,2}}_{|Y=0} = 0, \quad \lim\limits_{Y\to \infty} (u^{P,0,2} - \underline{u^{E,0}}) = 0,
    \end{cases}
\end{equation}
by using \eqref{div-B-k}
and \eqref{BC2-1}.

Similarly, one can derive the following problem for the leading order boundary layer profile $(v^{P,0,1}(t,X,y), u^{P,1,1}(t,X,y))$ near $\{x=0\}$,
\begin{equation}\label{Prandtl-0-1}
    \begin{cases}
        \partial_t v^{P,0,1} + u^{P,1,1} \partial_X v^{P,0,1} + v^{P,0,1}\partial_y v^{P,0,1} + \partial_y p^{P,0,1} = \partial_X^2 v^{P,0,1}, \quad X,y>0,\\
        \partial_X u^{P,1,1} + \partial_y v^{P,0,1} = 0,\quad X,y>0,\\
        {\partial_X v^{P,0,1}}_{|X=0} = {u^{P,1,1}}_{|X=0} = 0, \quad \lim_{Y\to \infty} (v^{P,0,1} - \overline{v^{E,0}}) = 0.
    \end{cases}
\end{equation}

It is easy to verify that
\begin{equation}\label{2.25}
\begin{array}{l}
\left(u^{P,0,2}, v^{P,1,2}, p^{P,0,2}\right)(t,x,Y)\equiv \left(u^{E,0}, Y\partial_yv^{E,0}, p^{E,0}\right)(t,x,0), \\[2mm]
\left( u^{P,1,1}, v^{P,0,1}, p^{P,0,1} \right)(t,X,y) \equiv \left(X\partial_xu^{E,0}, v^{E,0}, p^{E,0}\right)(t,0,y)
\end{array}
\end{equation}
satisfy the problems \eqref{Prandtl-0-2} and \eqref{Prandtl-0-1} respectively, by using \eqref{E,0}. Thus, we deduce that
\begin{equation}\label{2.26}
\begin{array}{l}
\left(u^{B,0,2}, v^{B,1,2}, p^{B,0,2}\right)(t,x,Y)= 0, \\[2mm]
\left(u^{B,1,1}, v^{B,0,1}, p^{B,0,1}\right)(t,X,y)=0.
\end{array}
\end{equation}
And furthermore, by \eqref{E,k} ($k=1$), the boundary conditions \eqref{BC1-x}-\eqref{BC1-y} ($k=1$) and the vanishing of $\u^{E,1}_0$, one deduces that
$$
\u^{E,1}(t,x,y) \equiv 0.
$$

By using the above properties, from the 
the vanishing of the order $O(\sqrt{\epsilon})$ in \eqref{ae-3}, and using \eqref{div-E/C}-\eqref{div-B-k} and \eqref{BC1-y},\eqref{BC2-k}, we deduce that the $O(\sqrt{\epsilon})-$order boundary layer profiles
$$
\begin{cases}
 u^{P,1,2}(t,x,Y)=u^{B,1,2}(t,x,Y)+Y\partial_yu^{E,0}(t,x,0), \\
v^{P,2,2}(t,x,Y)=v^{B,2,2}(t,x,Y)+v^{E,2}(t,x,0)+
\frac{Y^2}{2}\partial^2_yv^{E,0}(t,x,0),
\end{cases}$$
near $\{y=0\}$ satisfy the following problem for a linearized Prandtl equation defined in $\{x, Y>0\}$,
\begin{equation}\label{Prandtl-1-2}
    \begin{cases}
        \partial_t u^{P,1,2} + \underline{u^{E,0}} \partial_x u^{P,1,2} +
        u^{P,1,2}\underline{\partial_xu^{E,0}}
        +Y\underline{\partial_yv^{E,0}}\partial_Y u^{P,1,2} + \partial_x p^{P,1,2} = \partial_Y^2 u^{P,1,2},\quad x,Y>0,\\
        \partial_x u^{P,1,2} + \partial_Y v^{P,2,2} = 0,\quad x,Y>0,\\
        {\partial_Y u^{P,1,2}}_{|Y=0} =2\alpha u^{E,0}(t,x,0), \quad
        {v^{P,2,2}}_{|Y=0} = 0, \quad \lim\limits_{Y\to \infty} (u^{P,1,2}-Y\underline{\partial_yu^{E,0}}) = 0.
    \end{cases}
\end{equation}

Similarly,  by considering the behavior of the equation
\eqref{INSE} near $\{x=0\}$, 
the $O(\sqrt{\epsilon})-$order boundary layer profiles 
$$
\begin{cases}
 u^{P,2,1}(t,X,y)=u^{B,2,1}(t,X,y)+u^{E,2}(t,0,y)+
\frac{X^2}{2}\partial^2_xu^{E,0}(t,0,y),\\
v^{P,1,1}(t,X,y)=v^{B,1,1}(t,X,y)+X\partial_xv^{E,0}(t,0,y),
\end{cases}$$ satisfy the following problem for a linearized Prandtl equation defined in $\{X, y>0\}$,
\begin{equation}\label{Prandtl-1-1}
    \begin{cases}
        \partial_t v^{P,1,1} + X\overline{\partial_xu^{E,0}} \partial_X v^{P,1,1}
        +\overline{v^{E,0}}\partial_yv^{P,1,1}+v^{P,1,1}\overline{\partial_yv^{E,0}}
        + \partial_y p^{P,1,1} = \partial_X^2 v^{P,1,1},\quad X,y>0,\\
        \partial_X u^{P,2,1} + \partial_y v^{P,1,1} = 0,\quad X,y>0,\\
        {\partial_X v^{P,1,1}}_{|X=0} =2\alpha v^{E,0}(t,0,y), \quad
        {u^{P,2,1}}_{|X=0} = 0, \quad \lim\limits_{X\to \infty} (v^{P,1,1}-X\overline{\partial_xv^{E,0}}) = 0.
    \end{cases}
\end{equation}

By using \eqref{E,0}, \eqref{2.20}, and \eqref{2.22}, the problems \eqref{Prandtl-1-2} and \eqref{Prandtl-1-1} can be simplified as
the following two for $(u^{B,1,2}, v^{B,2,2})(t,x,Y)$ and 
$(u^{B,2,1}, v^{B,1,1})(t,X,y)$ respectively,
\begin{equation}\label{B-1-2}
    \begin{cases}
        \partial_t u^{B,1,2} + \underline{u^{E,0}} \partial_x u^{B,1,2} + 
        u^{B,1,2}\underline{\partial_xu^{E,0}} 
        +Y\underline{\partial_yv^{E,0}}\partial_Y u^{B,1,2} = \partial_Y^2 u^{B,1,2},\quad  x,Y>0,\\
        \partial_x u^{B,1,2} + \partial_Y v^{B,2,2} = 0,\quad  x,Y>0,\\
        {\partial_Y u^{B,1,2}}_{|Y=0} =2\alpha u^{E,0}(t,x,0)-\partial_yu^{E,0}(t,x,0),\quad
        \lim\limits_{Y\to \infty} (u^{B,1,2}, v^{B,2,2})=(0,0),
    \end{cases}
\end{equation}
and
\begin{equation}\label{B-1-1}
    \begin{cases}
        \partial_t v^{B,1,1} + X\overline{\partial_xu^{E,0}} \partial_X v^{B,1,1} 
        +\overline{v^{E,0}}\partial_yv^{B,1,1}+v^{B,1,1}\overline{\partial_yv^{E,0}}= \partial_X^2 v^{B,1,1},\quad  X,y>0,\\
        \partial_X u^{B,2,1} + \partial_y v^{B,1,1} = 0,\quad  X,y>0,\\
        {\partial_X v^{B,1,1}}_{|X=0} =2\alpha v^{E,0}(t,0,y)-\partial_xv^{E,0}(t,0,y), \quad 
        \lim\limits_{X\to \infty} (v^{B,1,1}, u^{B,2,1})=(0,0).   
    \end{cases}
\end{equation}

\vspace{.1in}
\textbf{\underline{(6) Problems of the corner layer profiles $\u^{C,k},p^{C,k}$}}

By plugging \eqref{ansatz} in \eqref{INSE} and letting $(x,y) = \sqrt{\epsilon}(X,Y)$, the equation given in \eqref{INSE} can be rewritten as
\begin{equation}\label{ae-4}
\begin{cases}
\sum_{k\ge 0} \epsilon^{\frac{k}{2}}\left(\partial_t u^{G,k} + \sum\limits_{0 \le j \le k+1} \left( u^{G,j} \partial_X + v^{G,j} \partial_Y\right) u^{G,k+1-j} + \partial_X p^{G,k+1}\right) \\
\hspace{.6in}+\epsilon^{-\frac{1}{2}}\left(( u^{G,0} \partial_X  + v^{G,0} \partial_Y )u^{G,0}+\partial_Xp^{G,0}\right)
    =\sum\limits_{k\ge 0} \epsilon^{\frac{k}{2}} \Delta_{X,Y} u^{G,k},\\[2mm]
\sum_{k\ge 0} \epsilon^{\frac{k}{2}}\left(\partial_t v^{G,k} + \sum\limits_{0 \le j \le k+1} \left( u^{G,j} \partial_X + v^{G,j} \partial_Y\right) v^{G,k+1-j} + \partial_Y p^{G,k+1}\right) \\
\hspace{.6in}+\epsilon^{-\frac{1}{2}}\left(( u^{G,0} \partial_X + v^{G,0} \partial_Y) v^{G,0}+\partial_Yp^{G,0}\right)
    =\sum\limits_{k\ge 0} \epsilon^{\frac{k}{2}} \Delta_{X,Y} v^{G,k},

\end{cases}
\end{equation}
for the corner layer profiles $u^{G,k}, v^{G,k}$ etc. defined in \eqref{b-1}.

The vanishing of $O(\epsilon^{-\frac{1}{2}})$ in \eqref{ae-4} gives that $(u^{G,0}, v^{G,0})$ satisfies
\begin{equation*}
\begin{cases}
 u^{G,0} \partial_X u^{G,0} + v^{G,0} \partial_Y u^{G,0} + \partial_X p^{G,0} = 0,\\
 u^{G,0} \partial_X v^{G,0} + v^{G,0} \partial_Y v^{G,0} + \partial_Y p^{G,0} = 0,
\end{cases}    
\end{equation*}
which implies that
$\u^{C,0} = (u^{C,0},v^{C,0})^T $ and $p^{C,0}$ satisfy the following boundary value problem  for the steady incompressible Euler equation in a quarter plane:
\begin{equation}\label{G,0}
    \begin{cases}
    (\u^{C,0} \cdot \nabla_{X,Y}) \u^{C,0} + \nabla_{X,Y} p^{C,0} = 0, \quad X,Y>0,\\
    \nabla_{X,Y}\cdot \u^{C,0} = 0, \quad X,Y>0, \\
    {u^{C,0}}_{|X=0} = {v^{C,0}}_{|Y=0} = 0,\\
    {\partial_Y u^{C,0}}_{|Y=0} = {\partial_X v^{C,0}}_{|X=0} = 0,\\
    \lim_{X\to \infty} u^{C,0} =\lim_{Y\to \infty} u^{C,0} = 0,\quad
    \lim_{X\to \infty} v^{C,0} =\lim_{Y\to \infty} v^{C,0} =0,
    \end{cases}
\end{equation}
by noting that
\begin{equation*}
  u^{G,0}(t,X,Y) =  u^{C,0}(t,X,Y), \quad 
   v^{G,0}(t,X,Y) =v^{C,0}(t,X,Y), 
\end{equation*}
and using \eqref{2.5} and \eqref{BC2-1}.
Obviously, the problem \eqref{G,0} has the trivial solution 
\begin{equation}\label{2.36}
\u^{C,0}(t,X,Y)\equiv 0, \quad 
p^{C,0}(t,X,Y)\equiv 0.
\end{equation}

From \eqref{p-k-x}-\eqref{p-k-y} with $k = 1$, one immediately derives that $\dX p^{B,2,1} = \dY p^{B,2,2} = 0$, which implies
\begin{equation}\label{2.37}
    p^{B,2,1}(t,X, y)\equiv 0, \quad p^{B,2,2}(t,x,Y) \equiv 0.
\end{equation}

The vanishing of $O(\epsilon^{\frac{k}{2}})$ for $k\ge 0$ in \eqref{ae-4} give that
\begin{equation}\label{G,k}
    \partial_t \u^{G,k} + \sum_{0 \le j \le k+1}  \u^{G,j} \nabla_{X,Y} \u^{G,k+1-j} + \nabla_{X,Y} p^{G,k+1} = \Delta_{X,Y} \u^{G,k}.
\end{equation}
 Taking $k=0$ in \eqref{G,k} and using $\u^{G,0}=0$, one gets $\nabla_{X,Y} p^{G,1} \equiv 0$, which implies that
\begin{equation}\label{2.39}
    p^{C,1}(t,X, Y) \equiv 0.
\end{equation}
Choosing $k=1$ in \eqref{G,k}, and using \eqref{B-1-2}, \eqref{B-1-1} and \eqref{E,0}, it follows that 
the corner layer profiles $\u^{C,1}=(u^{C,1}, v^{C, 1})^T(t,X,Y)$ and $p^{C,2}(t,X,Y)$ satisfy the following problem of the incompressible  Navier-Stokes type equations in $\{t>0, X>0, Y>0\}$:
\begin{equation}\label{u-C,1}
\begin{cases}
    \partial_t \u^{C,1} + (\u^{G,1} \cdot \nabla) \u^{C,1} + \nabla {p}^{C,2} - \Delta \u^{C,1} + M \u^{C,1} + B = 0, \quad X>0,Y>0\\
    \nabla \cdot \u^{C,1} = 0, \quad X>0,Y>0\\
    {u^{C,1}}_{|X=0} = - {u^{B,1,2}}(t,0,Y), \quad {\partial_Y u^{C,1}}_{|Y=0} = 0,\\
    {v^{C,1}}_{|Y=0} = -{v^{B,1,1}}(t,X,0), \quad {\partial_X v^{C,1}}_{|X=0} = 0, \\
    \lim\limits_{X,Y \to \infty} \u^{C,1} = 0,
    \end{cases}
        \end{equation}
        where $$\u^{G,1}(t,X,Y) = \begin{pmatrix}
        u^{C,1}(t,X,Y) + u^{B,1,2}(t,0,Y)+ X(\partial_x u^{E,0})(t,0,0)\\[2mm]
        v^{C,1}(t,X,Y) + v^{B,1,1}(t,X,0)+ Y(\partial_y v^{E,0})(t,0,0)       \end{pmatrix}$$
        and
        $$
        M = \begin{pmatrix}
            {\dx u^{E,0}}(t,0,0) & \dY {u^{B,1,2}}(t,0,Y) \\
            \dX {v^{B,1,1}}(t,X,0) & {\dy v^{E,0}}(t,0,0)
        \end{pmatrix},\quad B = 
        \begin{pmatrix}
            {v^{B,1,1}}(t,X,0) \dY {u^{B,1,2}}(t,0,Y) \\
            {u^{B,1,2}}(t,0,Y) \dX {v^{B,1,1}}(t,X,0)
        \end{pmatrix}.
        $$

The above formal derivation can be summarized as follows.

\begin{concl}\label{concl}
    Under the well-prepared condition of $\u^\epsilon_0$ given in Theorem \ref{thm-1}, the solution $(\u^\epsilon, p^\epsilon)(t,x,y)$ to the problem \eqref{INSE} formally have the following asymptotic expansions:
    \begin{equation}\label{Exp}
        \begin{cases}
    \begin{aligned}
\u^\epsilon = & \u^{E,0}(t,x,y) + \sqrt{\epsilon} \left(
(u^{B,1,2}(t,x,\frac{y}{\sqrt{\epsilon}}), v^{B,1,1}(t,\frac{x}{\sqrt{\epsilon}},y))^T+
\u^{C,1}(t,\frac{x}{\sqrt{\epsilon}},\frac{y}{\sqrt{\epsilon}}) \right)  \\
& + \sum_{k\ge 2} \epsilon^{\frac{k}{2}} \left(\u^{E,k}(t,x,y) + \u^{B,k,1}(t,\frac{x}{\sqrt{\epsilon}},y) + \u^{B,k,2}(t,x,\frac{y}{\sqrt{\epsilon}}) + \u^{C,k}(t,\frac{x}{\sqrt{\epsilon}},\frac{y}{\sqrt{\epsilon}})\right),
\end{aligned}\\
\begin{aligned}
p^\epsilon = & p^{E,0}(t,x,y)
+ \epsilon \left(p^{E,2}(t,x,y) + p^{C,2}(t,\frac{x}{\sqrt{\epsilon}},\frac{y}{\sqrt{\epsilon}}) \right)\\
& + \sum_{k\ge 3} \epsilon^{\frac{k}{2}} \left(p^{E,k}(t,x,y) + p^{B,k,1}(t,\frac{x}{\sqrt{\epsilon}},y) + p^{B,k,2}(t,x,\frac{y}{\sqrt{\epsilon}}) + p^{C,k}(t,\frac{x}{\sqrt{\epsilon}},\frac{y}{\sqrt{\epsilon}})\right),
\end{aligned}
        \end{cases}
    \end{equation}
    with boundary layer and corner layer profiles $\u^{B,k,1}, \u^{B,k,2}, p^{B,k,1}, p^{B,k,2}, \u^{C,k}, p^{C,k}$ being rapidly decreasing when $X=\frac{x}{\sqrt{\epsilon}}$ and $Y=\frac{y}{\sqrt{\epsilon}}$ go to infinity respectively, where
    \begin{enumerate}
        \item[(1)] $(\u^{E,0},p^{E,0})$ are solutions to the problem \eqref{limit} of the Euler equations,
        and for $k\ge 2$, $(\u^{E,k},p^{E,k})$ are solutions to problems of the linearized Euler equations \eqref{E,k} with the divergence-free condition \eqref{div-E/C} and the boundary conditions
        \begin{equation*}
            {u^{E,k}}_{|x=0} = - {u^{B,k,1}}_{|X=0}, \quad {v^{E,k}}_{|y=0} = - {v^{B,k,2}}_{|Y=0}, \quad k\ge 2.
        \end{equation*}

\item[(2)] the boundary layer profiles $(u^{B,1,2},v^{B,2,2})(t,x,Y)$ ($(v^{B,1,1},u^{B,2,1})(t,X,y)$ resp.)
        near the boundary $\{y=0\}$ ($\{x=0\}$ resp.) satisfy the problem \eqref{B-1-2}
        (\eqref{B-1-1} resp.) of the linearized Prandtl equations, and the higher order profiles $(u^{B,k,2},v^{B,k+1,2})(t,x,Y)$ and $(v^{B,k,1},u^{B,k+1,1})(t,X,y)$ with $k\ge 2$ satisfy similar problems, which can be derived from \eqref{ae-3}.

        \item[(3)] the corner layer profiles $(\u^{C,1},p^{C,2})(t,X,Y)$ satisfy the problem \eqref{u-C,1} of the Navier-Stokes equations, and the higher order corner layer profiles $(\u^{C,k},p^{C,k+1})(t,X,Y)$ with $k\ge 2$ satisfy problems of the linearized Navier-Stokes equations, which can be derived from \eqref{G,k}.
    \end{enumerate}
\end{concl}

\section{Well-posedness of the outer flow and boundary-layer problems}
In this section, we   proceed to determine partial profiles appeared in the expansions given in \eqref{Exp}, including the leading outer flow profile $(\u^{E,0}, p^{E,0})$, the boundary layer profiles along the edges, $(u^{B,1,2},v^{B,2,2})$, $(v^{B,1,1},u^{B,2,1})$ and (3) the second order outer profile $\u^{E,2}$.

\subsection{Well-posedness of the Euler equation in a quarter plane}

To study the problem \eqref{limit} of the Euler equation in the quarter plane $\Q=\{x>0, y>0\}$ with the initial data ${\bf u}_0(x,y)$, first, let us recall some classical results from  \cite{MR3054600, bardos2013euler}. If $\Omega$ is a bounded, simply connected subset of $\R^2$ whose
boundary is a piecewise $C^{1,\gamma}$ ($\gamma>0$) Jordan curve with finitely many angles smaller than $\pi$, there is a global weak solution of the Euler equations on $\Omega$ in the Yudovich class, i.e. $\u^{0} \in L^\infty(\R_+, L^2(\Omega)), \nabla\times \u^{0} \in L^\infty(\R_+\times \Omega)$. Moreover, by using the  elliptic regularity theory, one can also show that if the angles are no less than $\frac{\pi}{2}$, then the solution $\u^{0}$ has at most $H^{2-\sigma}$ ($\sigma >0$) regularity in space. 
To have better regularity, we shall take advantage of the hypothesis that $\nabla\times \u_0$ is supported away from the corner point. Indeed, by using \cite{MR3981393} one gets that for any fixed $T>0$, $\nabla\times \u^{0}$ is supported away from the corner point as well, which implies that $\u^{0}$ is harmonic in a small neighborhood of $\mathbf{0}$.

Moreover, the quarter plane offers a significant technical advantage, which enables us to construct solutions to \eqref{limit} via {\it even-odd extensions}, in the sense that extends $u^{0}$ (resp. $v^{0}$) oddly across $x=0$ (resp. $y=0$) then evenly across $y=0$ (resp. $x=0$), and extends $p^{0}$ evenly across both $x=0$ and $y=0$. It is easy to know that the resulting functions $(\widetilde{\u}^{0},\widetilde{p}^{0})$ satisfy the following initial value problem:
\begin{equation}\label{e,o-ext}
    \begin{cases}
        \partial_t \widetilde{\u}^{0} + \widetilde{\u}^{0} \cdot \nabla \widetilde{\u}^{0} + \nabla \widetilde{p}^{0} = 0, \quad \text{in }\R^2,\\
        \nabla \cdot \widetilde{\u}^{0} = 0, \quad \text{in }\R^2,\\
        \widetilde{\u}^{0}(0,\cdot) = \widetilde{\u}_0,
    \end{cases}
\end{equation}
where $\widetilde{\u}_0$ is the extension of $\u_0$ following the same procedure as above. Note that the even-odd extension generally only ensures that $\widetilde{\u}_0$ remains bounded in $H^{\frac{3}{2}-\varepsilon}(\R^2)$, since we lack the necessary conditions -- such as $\partial_x^{2l} u_0(0,y) = 0~ (l\ge 1)$ or $\partial_y^{2l+1} u_0(x,0) = 0~(l\ge 0)$ -- to preserve higher-order regularity in standard Sobolev spaces. 
However, this extension does not give away the regularity of $\widetilde{\u}_0$ in the following conormal spaces:
\begin{equation*}
    H^m_{\co}(\R^2) := \left\{u: \, \sum_{k+l\le m} (\chi(x)\partial_x)^k (\chi(y)\partial_y)^l u \in L^2(\R^2) \right\},
\end{equation*}
with $\chi(\cdot)$ being a smooth cut-off function on $\R$, $\chi(x) = x$ if $|x|\le \frac{1}{2}$, $\chi(x) \equiv \sgn(x)$ if $|x|\ge 1$. $H^m_{\co}(\R^2)$ is a Hilbert space with its canonical norm
\begin{equation*}
    \|u\|_{m,\co}: = \sum_{k+l\le m} \|(\chi(x)\partial_x)^k (\chi(y)\partial_y)^l u \|_{L^2(\R^2)}.
\end{equation*}
Similarly, we define 
\begin{equation*}
    W^{m,\infty}_{\co} (\R^2) := \left\{u: \, \sum_{k+l\le m} (\chi(x)\partial_x)^k (\chi(y)\partial_y)^l u \in L^\infty(\R^2) \right\}.
\end{equation*}
In the sequel, the domain $\R^2$ in the conormal spaces may be replaced by $\Q$ or its subset.

Now we introduce a lemma that will be useful throughout this subsection.
\begin{lemma}\label{lemma-tan-vor}
    Suppose that $\u = (u,v)^T$ is a divergence-free smooth vector field defined in $\Q$, and $\omega := \nabla \times \u = 0$ in $\Q_r:=(0,r)^2$ for some $r>0$, then for any fixed integer $k \ge 1$, $\nabla^k \u$ can be decomposed in two ways in $\Q$:
    \begin{equation}\label{decom-1}
        \nabla^k \u = D^{k-1}_{I} \omega + \partial_y^k \mathbf{U}_{I},
    \end{equation}
    and
    \begin{equation}\label{decom-2}
        \nabla^k \u = D^{k-1}_{I\!I} \omega + \partial_x^k \mathbf{U}_{I\!I},
    \end{equation}
    where $D^{k-1}_{I}, D^{k-1}_{I\!I}$ are vectors consisting of $(k-1)$-th order differential operators acting on the vorticity $\omega$, and $\mathbf{U}_I, \mathbf{U}_{I\!I}$ are vectors whose components are either $\pm u$ or $\pm v$. Moreover, one has 
    \begin{equation}\label{4.4}
        \|\nabla^k \u\|_{L^2(\Q\backslash\Q_r)} \lesssim \|\u\|_{k,\co} + \|\omega \|_{H^{k-1}(\Q)}.
    \end{equation}
\end{lemma}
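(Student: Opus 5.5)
The plan is to use the two scalar identities $\partial_x u = -\partial_y v$ and $\partial_y u = \partial_x v + \omega$, equivalently $\partial_x v = \partial_y u - \omega$, which follow from $\nabla\cdot\u=0$ and $\omega=\partial_x v-\partial_y u$. With these, every $x$-derivative can be traded for a $y$-derivative at the cost of a vorticity term, and vice versa.

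\textbf{Decompositions \eqref{decom-1} and \eqref{decom-2}.} We prove \eqref{decom-1} by induction on the number of $x$-derivatives. Take a component $\partial_x^a\partial_y^b w$ of $\nabla^k\u$, with $w\in\{u,v\}$ and $a+b=k$. If $a\ge1$, apply the appropriate identity to one inner $\partial_x$. For $w=u$ this gives $\partial_x u=-\partial_y v$. For $w=v$ this gives $\partial_x v=\partial_y u-\omega$. The result is $\partial_x^{a-1}\partial_y^{b+1}(\pm\tilde w)$ plus possibly $-\partial_x^{a-1}\partial_y^{b}\omega$, where $\tilde w$ is the other component. The vorticity term involves a differential operator of order $a-1+b=k-1$ acting on $\omega$. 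Iterating $a$ times reduces the remaining term to $\partial_y^k(\pm u)$ or $\partial_y^k(\pm v)$. The accumulated vorticity terms are all of order $k-1$, and collecting them defines $D^{k-1}_I\omega$. The decomposition \eqref{decom-2} follows symmetrically by eliminating $\partial_y$, using $\partial_y v=-\partial_x u$ and $\partial_y u=\partial_x v-\omega$. Both identities hold pointwise in $\Q$ for smooth fields. The hypothesis $\omega=0$ in $\Q_r$ is not needed at this stage.

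\textbf{Estimate \eqref{4.4}.} Split $\Q\setminus\Q_r$ into the two regions $A=\{x\ge r\}$ and $B=\{y\ge r\}$, so that $\Q\setminus\Q_r\subset A\cup B$. The vorticity parts are bounded in both regions by $\|\omega\|_{H^{k-1}(\Q)}$. On $A$, use \eqref{decom-1}: the tangential part $\partial_y^k\mathbf{U}_I$ must be controlled by the conormal norm. Near $y=0$, however, $\partial_y$ is not itself a conormal field; only $\chi(y)\partial_y$ is. So on $A$ use instead \eqref{decom-2}, since $\partial_x^k$ there is controlled by $(\chi(x)\partial_x)^k$ because $\chi(x)\ge \min(r,1/2)$ for $x\ge r$. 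Concretely, expand $(\chi\partial_x)^k=\chi^k\partial_x^k+\sum_{j<k}c_j(\chi)\partial_x^j$, where the $c_j$ are smooth and bounded. Then
\[
\|\partial_x^k u\|_{L^2(A)}\lesssim_r \|(\chi(x)\partial_x)^k u\|_{L^2}+\sum_{j<k}\|\partial_x^j u\|_{L^2(A)}.
\]
The lower-order terms are absorbed by induction on $k$, the base case $j=0$ being $\|u\|_{L^2}$. The same argument on $B$ uses \eqref{decom-1} with $\chi(y)\ge c_r$. Summing the two regions gives \eqref{4.4}.

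The main obstacle is this lower-order bookkeeping in the conormal-to-standard conversion, in particular making sure the induction stays inside the regions $A$ and $B$. If it does not, the lower-order terms could be handled by interpolation or by a cutoff in $x$ supported in $\{x>r/2\}$, which also explains why the estimate is restricted to $\Q\setminus\Q_r$.
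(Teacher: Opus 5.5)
Your proposal is correct and matches the paper's argument. You use the same pointwise identities, with induction, to obtain both decompositions, and for the estimate you use the $\partial_y^k$ form where $y\ge r$ and the $\partial_x^k$ form where $x\ge r$. The paper instead splits into $\Sigma_1=\{x<r,\,y>r\}$, $\Sigma_2=\{x>r,\,y<r\}$ and $\{x,y>r\}$, which is equivalent to your overlapping cover. The lower-order bookkeeping you flag is harmless: $(\chi(x)\partial_x)^j=\chi^j\partial_x^j+\sum_{i<j}c_{j,i}(x)\partial_x^i$ holds pointwise with $\chi$ bounded below on $\{x\ge r\}$, so your induction on $j$ never leaves that region.
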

\begin{proof}
(1) When $k = 1$, by using the divergence-free constrain of $\u$, it holds
    $$
    \begin{aligned}
        \nabla \u = (\dx u, \dy u, \dx v, \dy v) = & (-\dy v, \dy u, \omega + \dy u, \dy v)\\
        = & (\dx u, \dx v - \omega, \dx v, -\dx u),
    \end{aligned}
    $$
    which can be written as the form of \eqref{decom-1} or \eqref{decom-2} with $k=1$. For general $k\ge 1$, the decomposition formulae \eqref{decom-1} and \eqref{decom-2} can be easily obtained by induction on $k$ and the same calculation as above. 

    (2)
    The estimate \eqref{4.4} is a  direct consequence from the decompositions \eqref{decom-1} and \eqref{decom-2}. Indeed, if denote by $\Sigma_1 = \{x\in (0,r),y>r\}$ and $\Sigma_2 = \{x>r,y\in (0,r)\}$, then
    \begin{equation*}
        \begin{aligned}
            \|\nabla^k \u\|_{L^2(\Q\backslash\Q_r)} \lesssim & \|D_I^{k-1}\omega\|_{L^2(\Sigma_1)} + \|\partial_y^k \mathbf{U}_{I}\|_{L^2(\Sigma_1)} \\
            & + \|D_{I\!I}^{k-1}\omega\|_{L^2(\Sigma_2)} + \|\partial_x^k \mathbf{U}_{I\!I}\|_{L^2(\Sigma_2)} + \|\nabla^k \u\|_{L^2(\Q\backslash (\Sigma_1\cup \Sigma_2 \cup \Q_r))} \\
             \lesssim & \|\u\|_{H^k_{\co}(\Q\backslash \Q_r)} + \|\omega\|_{H^{k-1}(\Sigma_1\cup \Sigma_2)}.
        \end{aligned}
    \end{equation*}
\end{proof}


To study the well-posedness of the problem \eqref{limit}  for the Euler equation in the quarter plane, we first introduce the space
$E^m = \{\u|~ \u \in H^m_{\co}(\Q), \nabla\times \u \in H^{m-1}_{\co}\cap W^{1,\infty}(\Q)\}$.

\begin{proposition}\label{prop-Euler}
    Let $m\ge 4$ be an integer, $\u_0 \in E^m$ satisfying that 
    $\nabla\cdot \u_0 = 0$, ${\u_0 \cdot \vec{n}}_{|\partial\Q} = 0$, and 
    $\nabla\times \u_0 = 0$ in a small neighborhood of $\mathbf{0}$. 
    Then, there exists $T>0$ such that the problem \eqref{limit} with initial data $\u_0$ has a unique solution  $(\u^{0},p^{0})$ in $0<t<T$, and $(\u^{0}, \nabla p^{0}) \in L^\infty(0,T; E^m) \times L^\infty(0,T; H^{m-1}_{\co}(\Q))$. Moreover, there exists $\delta>0$ and a constant $C$, such that $\omega^{0} := \nabla \times \u^{0} = 0$ in $Q_\delta = (0,\delta)^2$, and for a.e. $t\in (0,T)$,
    \begin{equation}\label{est-E,0}
        \begin{aligned}
            & \|\u^{0}(t)\|^2_{m,\co} + \|\omega^{0}(t)\|_{m-1,\co}^2 + \|\omega^{0}(t)\|_{W^{1,\infty}(\Q)}^2 \le C.
        \end{aligned}
    \end{equation}
\end{proposition}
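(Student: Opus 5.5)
We plan to prove Proposition~\ref{prop-Euler} by combining the even-odd extension \eqref{e,o-ext} with vorticity transport, conormal energy estimates, and Lemma~\ref{lemma-tan-vor}.

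\emph{Step 1: Lipschitz bound and local existence.} Let $\widetilde{\u}_0$ be the even-odd extension of $\u_0$. Its vorticity $\widetilde\omega_0$ is then even-even in the appropriate sense: $\omega=\partial_x v-\partial_y u$ is odd in $x$ and odd in $y$. Since $\omega_0$ vanishes near the corner and lies in $W^{1,\infty}\cap H^{m-1}_{\co}$, the extension $\widetilde\omega_0$ lies in $L^1\cap L^\infty$. It is not globally $H^{m-1}$, because odd reflection across a line where $\omega_0$ does not vanish creates a jump.

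We therefore work directly in $\Q$ and first establish an a priori bound on $\|\nabla\u^0\|_{L^\infty}$. For this we use the vorticity equation $\dt\omega+\u\cdot\nabla\omega=0$, which gives $\|\omega(t)\|_{W^{1,\infty}}$ control as long as $\int_0^t\|\nabla \u\|_{L^\infty}$ is bounded. We then recover $\u$ from $\omega$ via the Biot--Savart law on $\Q$, given by the reflected kernel.

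To bound $\nabla\u$ in $L^\infty$, we split $\Q$ into two regions. Near $\mathbf 0$, $\u$ is harmonic with tangency conditions; there we use Lemma~\ref{lemma-tan-vor} and local elliptic regularity. Away from the corner, standard estimates apply.

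\emph{Step 2: Support of the vorticity stays away from the corner.} The velocity is Lipschitz and tangent to $\partial\Q$, and $\u(t,\mathbf 0)=0$ by symmetry. Hence the flow map preserves $\partial\Q$ and fixes the corner. Its trajectories satisfy $|X(t)|\ge|X(0)|e^{-\int\|\nabla\u\|_\infty}$, so $\omega(t)=0$ in $Q_\delta$ for a smaller $\delta$, uniformly on $[0,T]$.

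\emph{Step 3: Conormal energy estimates.} The vector fields $Z_1=\chi(x)\partial_x$ and $Z_2=\chi(y)\partial_y$ are tangent to $\partial\Q$. We apply $Z^\alpha$ with $|\alpha|\le m$ to the equation for $\u$, and with $|\alpha|\le m-1$ to the equation for $\omega$.

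The commutators $[Z^\alpha,\u\cdot\nabla]$ are bounded by $\|\nabla\u\|_{L^\infty}$ times conormal norms, together with terms involving $\|Z\u\|_{W^{1,\infty}}$. This is standard, following Masmoudi--Rousset. Boundary terms vanish because $\u\cdot\vec n=0$ and $Z$ is tangential.

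The pressure solves $-\Delta p=\nabla\u:\nabla\u^T$ with Neumann data $\partial_n p=0$ on the flat edges. Since $u|_{x=0}=0$ and $\partial_t u+\dots$, we can estimate $\nabla p$ in $H^{m-1}_{\co}$ by even reflection of $p$. The source is even-even, so this becomes a Laplace problem on $\R^2$, and the appendix estimates apply.

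Normal derivatives that appear are converted, via Lemma~\ref{lemma-tan-vor}, into conormal derivatives of $\u$ plus derivatives of $\omega$. Near the corner, $\omega=0$ by Step 2, and \eqref{4.4} applies. Using Sobolev embedding for $m\ge4$, these estimates close a Gronwall inequality for $\|\u\|_{m,\co}^2+\|\omega\|_{m-1,\co}^2+\|\omega\|_{W^{1,\infty}}^2$ on a short interval, which gives \eqref{est-E,0}.

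\emph{Step 4: Existence and uniqueness.} Existence follows by mollified approximation, or Galerkin approximation on even-odd extended fields, combined with compactness. Uniqueness follows by an $L^2$ energy estimate on the difference, using $\nabla\u\in L^\infty$.

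\emph{Main obstacle.} We expect the main obstacle to be the uniform $L^\infty$ bound of $\nabla\u$ and the $W^{1,\infty}$ Biot--Savart estimate at the right-angle corner, i.e.\ excluding the $r^2\log r$ singularity. To handle this, we would first write $\u=\nabla^\perp\psi$ with $\psi$ odd-odd extended. Then $\Delta\widetilde\psi=\widetilde\omega$ on $\R^2$, where $\widetilde\omega$ vanishes in a neighborhood of $\mathbf 0$ and is $W^{1,\infty}$ in each quadrant. This reduces the problem to potential estimates of a piecewise-smooth density that is zero near the only point where the four quadrants meet.
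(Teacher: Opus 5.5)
Your outline is sound, and its core (Steps 3 and 4) is the paper's proof. Both use conormal energy estimates for $Z^\alpha\mathbf{u}^0$ with $|\alpha|\le m$ and for $Z^\alpha\omega^0$ with $|\alpha|\le m-1$, handling $u^0/\chi(x)$ and $v^0/\chi(y)$ by Hardy's inequality. Both add an elliptic bound for $\|\nabla p^0\|_{m-1,\co}$, the transport bound for $\|\omega^0\|_{W^{1,\infty}}$, Gronwall, then compactness and $L^2$ uniqueness. You differ in three auxiliary steps.

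(i) The paper takes the persistence of $\supp\omega^0(t)\subset\Q\setminus\Q_2$ from \cite{MR3981393}. Your bound $|X(t)|\ge|X(0)|e^{-\int_0^t\|\nabla\mathbf{u}^0\|_{L^\infty}}$, using $\mathbf{u}^0(t,\mathbf 0)=0$, is self-contained and quantitative. It must, however, be closed as a bootstrap with the Lipschitz bound, whose constant depends on $\delta(t)$.

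(ii) The paper never uses Biot--Savart. In $\Q_2$ it reflects $\mathbf{u}^0$ itself, which is harmonic there, and an interior estimate on $B_{3/2}$ gives $\|\mathbf{u}^0\|_{H^m(\Q_1)}\lesssim\|\mathbf{u}^0\|_{m,\co}$. Away from the corner, Lemma~\ref{lemma-tan-vor}, the conormal Hodge bound \eqref{est-E,0-Hodge} and anisotropic embedding give $\|\mathbf{u}^0\|_{W^{2,\infty}}\lesssim Q_4^{1/2}$ in \eqref{Q4}; everything stays $L^2$-based. Your odd--odd stream function gives a Lipschitz criterion depending only on $\omega$ and $\delta$. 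The price is a vortex-patch (striated regularity) estimate for a density that jumps across the axes. It also does not give the second-order bound the vorticity commutators need, so you still need the argument of \eqref{Q4}.

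(iii) After even reflection of $p$, you need $\nabla\Delta^{-1}\nabla\cdot$ to be bounded on $H^{m-1}_{\co}(\R^2)$. That requires Calder\'on-type commutator estimates between Riesz transforms and $\chi(x)\partial_x$, $\chi(y)\partial_y$. The appendix lemmas are boundary value estimates in $\Q$ and do not give this. The paper instead tests the equation for $Z^\alpha p^0$ in $\Q$ against $Z^\alpha p^0$, uses $Z^\alpha p^0|_{x=0}=0$ for $\alpha_1\ge 1$, and inducts on $|\alpha|$.

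Two points need repair.

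First, \eqref{4.4} controls $\nabla^k\mathbf{u}$ only on $\Q\setminus\Q_r$, not near the corner, and it does so through the full norm $\|\omega\|_{H^{k-1}(\Q)}$. That norm is not available here; it is the extra hypothesis of Proposition~\ref{prop-E,0-omega}. The conormal scheme needs only one normal derivative, namely $\|\nabla\mathbf{u}^0\|_{m-1,\co}\lesssim\|\mathbf{u}^0\|_{m,\co}+\|\omega^0\|_{m-1,\co}$. This follows from the $k=1$ decomposition commuted with $Z^\alpha$, together with the harmonic estimate in $\Q_1$.

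Second, in Step 4 any smooth approximation with the even--odd symmetry has odd--odd vorticity, which vanishes on the axes. If $\omega_0|_{\partial\Q}\not\equiv 0$, the norms $\|\omega^{(k)}\|_{W^{1,\infty}(\Q)}$ therefore cannot stay bounded, since Arzel\`a--Ascoli would force the limit to vanish on $\partial\Q$. Consequently the Gronwall functional containing $\|\omega\|^2_{W^{1,\infty}}$ is not uniform along the sequence. You should run the compactness argument with quantities stable under mollification: the conormal norms, $\|\omega\|_{L^\infty}$ and $\|\omega\|_{W^{1,\infty}_{\co}}$. With some care these suffice for the Lipschitz bound and for the commutators. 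Then recover $\omega^0(t)=\omega_0\circ X_t^{-1}\in W^{1,\infty}(\Q)$ for the limit from your Lagrangian picture. The paper's \eqref{est-E,0-bdd} passes over the same point.

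Finally, $\tilde\omega_0\in L^1$ does not follow from the hypotheses. What you have is $L^2\cap L^\infty$, and that suffices.
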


\begin{proof}
Throughout the proof, we focus on establishing a priori estimate \eqref{est-E,0} for a sufficiently smooth solution of \eqref{limit}. The existence of a solution can be proved via a compactness argument. 
Indeed, first denote by $\widetilde{\u}_0$
the even-odd extension  of $\u_0$ as defined in the above of \eqref{e,o-ext}. Obviously, it holds
\begin{equation}\label{est-E,0-bdd}
    \|\widetilde{\u}_0\|_{H^m_{\co}(\R^2)} + \|\nabla \times \widetilde{\u}_0\|_{H^{m-1}_{\co}(\R^2) \cap W^{1,\infty}(\R^2)} \lesssim 
    \|\u_0\|_{H^m_{\co}(\Q)} + \|\nabla \times {\u}_0\|_{H^{m-1}_{\co}(\Q) \cap W^{1,\infty}(\Q)},
\end{equation}
by using ${\u_0 \cdot \vec{n}}_{|\partial\Q} = 0$. Now, 
we construct a sequence of smooth approximations $\widetilde{\u}^{(k)}_0 \in C^\infty_c(\R^2)$ of $\widetilde{\u}_0$, satisfying the uniform bound \eqref{est-E,0-bdd}. For each $k$, we invoke a standard well-posedness result for incompressible Euler equations on $\R^2$ to obtain a unique smooth solution $\widetilde{\u}^{(k)}$ to the extended Cauchy problem \eqref{e,o-ext} on a time interval $[0, T_k]$. Using the a priori estimates (which are independent of $k$), we show that there exists a common time $T>0$ such that all $\widetilde{\u}^{(k)}$ exist on $[0, T]$ and satisfy the uniform bounds \eqref{est-E,0}. Moreover, the uniform bounds yield that $\widetilde{\u}^{(k)}$ is bounded in $L^\infty(0,T; H^m_{\co}(\R^2))$ and $\partial_t \widetilde{\u}^{(k)}$ is bounded in $L^2(0,T; H^{-1}_{\operatorname{loc}}(\R^2))$. Applying the Aubin-Lions compactness lemma and by a diagonal extraction process, we obtain a subsequence of $\widetilde{\u}^{(k)}$  that converges strongly in $C([0, T]; H^{m-1}_{\co,\operatorname{loc}}(\R^2))$ to a limit $\widetilde{\u}^{0}$. The uniform bounds and weak lower semi-continuity imply $\widetilde{\u}^{0}\in L^\infty(0,T; H^m_{\co}(\R^2))$ and $\nabla \times \widetilde{\u}^{0} \in L^\infty(0,T; H^{m-1}_{\co}(\R^2)\cap W^{1,\infty}(\R^2))$.
Using the strong convergence in $H^{m-1}_{\co,\operatorname{loc}}$ and the weak convergence of gradients (from the uniform bound in $H^m_{\co}$), we pass to the limit in the nonlinear terms to show that $\widetilde{\u}^{0}$ satisfies the extended Euler equations in the sense of distributions; the regularity then ensures it is a strong solution. The solution to the original problem is obtained by restricting $\widetilde{\u}^{0}$ to $\Q$. The uniqueness follows from the Lipschitz regularity of the velocity field.

    From now on, let us fix a $T>0$, and assume that $(\u^{0},p^{0})$ is a sufficiently smooth solution to \eqref{limit} on $(0,T)$. Denote by for all $t\in (0,T)$,
    \begin{equation}\label{Q(u,t)}
        \begin{aligned}
        & Q_m(\u^{0}; t) := \|\u^{0}(t)\|^2_{m,\co} 
        + \|\nabla\times\u^{0}(t)\|^2_{m-1,\co} + \|\nabla\times\u^{0}(t)\|_{W^{1,\infty}(\Q)}^2.
        \end{aligned}
    \end{equation}
    
    \noindent \underline{\textbf{Reduction to $Q_m(\u^{0}; t)$.}}
    
    Without loss of generality, we assume that
    \begin{equation}\label{cdt-vor'}
        \supp\left\{\nabla\times \u^{0}(t)\right\} \subset \Q\backslash \Q_2 \quad \forall t\in [0,T],
    \end{equation}
    where and in the sequel, $\Q_r$ denotes the cube $(0,r)^2$.
    This means that the solution $\u^{0}$ is harmonic in $\Q_2$. Then from $\u^{0}\cdot \vec{n}_{|\partial\Q} = 0$,
    $\u^{0}$ has the following trace properties:
    \begin{equation*}
        \begin{aligned}
        {\dx^{2k} u^{0}}_{|x=0, y \in (0,2)} = 0, \quad {\partial^{2k+1}_y u^{0}}_{|y = 0, x\in (0,2)} = 0,
        \end{aligned}
    \end{equation*}
    as long as the above traces make sense. Therefore, one can extend ${u^{0}}_{|\Q_2}$ oddly across $x=0$ and evenly across $y=0$, the resulting function $\widetilde{u}^{0}$ is still harmonic in $\widetilde{\Q}_2 = (-2,2)^2$.
    Similarly, one can also extend ${v^{0}}_{|\Q_2}$ evenly across $x=0$ and oddly across $y=0$,  the resulting function is denoted as $\widetilde{v}^{0}$. Then $\widetilde{\u}^{0} := (\widetilde{u}^{0}, \widetilde{v}^{0})$ is harmonic in $\widetilde{\Q}_2$, and
    \begin{equation*}
        \|\widetilde{\u}^{0}(t)\|_{H^m(\widetilde{\Q}_2)} \simeq \|\u^{0}(t)\|_{H^m(\Q_2)}. 
    \end{equation*}

    Denote by $B_r$ the disk $\{(x,y):x^2+y^2<r^2\}$ and $A_{r_1,r_2}$ the angulus $B_{r_2}\backslash B_{r_1}$ ($r_2>r_1$). Then, by the elliptic estimates on a disk and a trace theorem, one has
    \begin{equation*}
        \begin{aligned}
            \|\widetilde{\u}^{0}(t)\|_{H^m(\widetilde{\Q}_1)} \lesssim & \|\widetilde{\u}^{0}(t)\|_{H^m(B_{\frac{3}{2}})} \lesssim \|\widetilde{\u}^{0}(t)\|_{H^{m-\frac{1}{2}}(\partial B_{\frac{3}{2}})} \\ \lesssim & \|\widetilde{\u}^{0}(t)\|_{H^{m}(A_{\frac{3}{2},2})}  \lesssim \|{\u}^{0}(t)\|_{H^{m}(\Q\cap A_{\frac{3}{2},2})}.
        \end{aligned}
    \end{equation*}
    Then, by the above elliptic estimate and the decompositions \eqref{decom-1}-\eqref{decom-2} in Lemma \ref{lemma-tan-vor},
    one has
    \begin{equation}\label{4.9}
        \|\u^{0}(t)\|_{H^m(\Q_1)}
        \lesssim
        \|\u^{0}(t)\|_{m,\co} \le Q^{\frac{1}{2}}_m(\u^{0};t).
    \end{equation}
    We apply the Hodge estimates \eqref{Hodge} (see \cite[Appendix B]{guo2025well} for the weighted estimates) to get
    \begin{equation}\label{est-E,0-Hodge}
        \|\nabla \u^{0}(t)\|_{m-1,\co} \lesssim \|\u^{0}(t)\|_{m-1,\co} + \|\omega^{0}(t)\|_{m-1,\co}.
    \end{equation}
    By using Lemma \ref{lemma-tan-vor}, the Sobolev embedding theorem, and \eqref{4.9} we obtain
    \begin{equation}\label{Q4}
        \begin{aligned}
            \|\u^{0}(t)\|_{W^{2,\infty}(\Q)} \lesssim & \|\u^{0}(t)\|_{W^{2,\infty}(\Q_1)}  + \|\u^{0}(t)\|_{W^{2,\infty}_{\co}(\Q\backslash\Q_1)} + \|\omega^{0}(t)\|_{W^{1,\infty}(\Q)}\\
        \lesssim & Q^{\frac{1}{2}}_4(\u^{0};t) + \|\u^{0}(t)\|_{H^1_{x,y} H^3_{\co}(\Q\backslash\Q_1)} \\
        \lesssim & Q^{\frac{1}{2}}_4(\u^{0};t) .
        \end{aligned}
    \end{equation}
    
    \noindent \underline{\textbf{Conormal estimates for $\u^{0}$.}}

    By acting $Z^\alpha = (\chi(x)\partial_x)^{\alpha_1}(\chi(y)\partial_y)^{\alpha_2}$ with $|\alpha|\le m$ on the problem \eqref{limit}, we get that
    \begin{equation}\label{est-E,0-co}
        (\partial_t+ \u^{0} \cdot \nabla)( Z^\alpha \u^{0}) + \nabla Z^\alpha p^{0} = [\u^{0}\cdot \nabla, Z^\alpha] \u^{0} + [\nabla, Z^\alpha] p^{0}.
    \end{equation}
    Noting that from the divergence-free constrain and the boundary condition given  in \eqref{limit}, it follows
    \begin{equation*}
        \nabla\cdot (Z^\alpha \u^{0}) = [\nabla\cdot, Z^\alpha] \u^{0},
    \end{equation*}
    and 
    \begin{equation*}
        {Z^\alpha u^{0}}_{|x=0} = {Z^\alpha v^{0}}_{|y=0} = 0,
    \end{equation*}
   so by employing the standard energy estimate for the equation \eqref{est-E,0-co} and using the boundary condition, it follows
    \begin{equation}\label{4.13}
        \begin{aligned}
            \frac{\mathrm{d}}{2\mathrm{d} t} \|Z^\alpha \u^{0}(t)\|^2  \le & |\inp*{Z^\alpha p^{0}}{[\nabla\cdot, Z^\alpha]\u^{0}(t)}| + |\inp*{[\nabla, Z^\alpha] p^{0}}{Z^\alpha \u^{0}(t)}| \\
            & + |\inp*{[\u^{0}\cdot \nabla, Z^\alpha] \u^{0}}{Z^\alpha \u^{0}(t)}|.
        \end{aligned}
    \end{equation}
    Now, we estimate the right hand side of \eqref{4.13} term by term. First, when $\alpha_1 \ge 1$, there exists smooth functions $c_{\alpha_1,j}$ for $0\le j\le \alpha_1-1$, such that
    \begin{equation*}
        \begin{aligned}
            [\partial_x, (\chi(x)\partial_x)^{\alpha_1}] & =  \sum_{0\le j\le \alpha_1 -1} c_{\alpha_1,j}(x) \partial_x (\chi(x)\partial_x)^j,
        \end{aligned}
    \end{equation*}
 which implies that 
    \begin{equation*}
        \begin{aligned}
            & \quad |\inp*{(\chi(x)\partial_x)^{\alpha_1} p^{0}}{[\nabla\cdot, (\chi(x)\partial_x)^{\alpha_1}]\u^{0}(t)}| \\
            \le & \sum_{0\le j\le \alpha_1 - 1}
            |\inp*{c_{\alpha_1,j}
            \partial_x (\chi(x)\partial_x)^{\alpha_1-1} p^{0}}{(\chi(x)\partial_x)^{j+1}u^{0}(t)}|,
        \end{aligned}
    \end{equation*}
   as $\alpha_1 \ge 1$. The same holds for the commutator $[\partial_y, (\chi(y)\partial_y)^{\alpha_2}]$ with $\alpha_2\ge 1$. Thus, one can conclude that
    \begin{equation}\label{4.14}
        |\inp*{Z^\alpha p^{0}}{[\nabla\cdot, Z^\alpha]\u^{0}(t)}| \lesssim \|\nabla p^{0}(t)\|_{m-1,\co} \|\u^{0}(t)\|_{m,\co}.
    \end{equation}
    Also, we have
    \begin{equation}\label{4.15}
        |\inp*{[\nabla, Z^\alpha] p^{0}}{Z^\alpha \u^{0}(t)}| \lesssim \|\nabla p^{0}(t)\|_{m-1,\co} \|\u^{0}(t)\|_{m,\co}.
    \end{equation}
    Finally,
    \begin{equation}\label{4.16}
        \begin{aligned}
            [\u^{0}\cdot \nabla, Z^\alpha] \u^{0} & = \sum_{\beta + \gamma = \alpha, \beta \ne 0} c_{\beta,\gamma} Z^\beta \u^{0} \cdot Z^\gamma \nabla \u^{0} + \left(\u^{0} \cdot [Z^\alpha, \nabla]\right)\u^{0},
        \end{aligned}
    \end{equation}
    where $c_{\beta,\gamma}$ are smooth functions. We observe that
    \begin{equation}\label{est-E,0-4}
        \begin{aligned}
            \|\left(\u^{0} \cdot [Z^\alpha, \nabla]\right)\u^{0}(t)\|  \lesssim 
            &
            \sum_{0\le j\le \alpha_1 - 1}\|c_{\alpha_1,j}(x) u^{E,0} \partial_x (\chi(x)\partial_x)^j (\chi(y)\partial_y)^{\alpha_2}\u^{0}(t)\|\\
            & + \sum_{0\le j\le \alpha_2 - 1}\|c_{\alpha_2,j}(y) v^{0} (\chi(x)\partial_x)^{\alpha_1}  \partial_y (\chi(y)\partial_y)^j \u^{0}(t)\|\\
             \lesssim 
             &
             \left(\left\|\frac{u^{0}(t)}{\chi(x)}\right\|_{L^\infty(\Q)} + \left\|\frac{v^{0}(t)}{\chi(y)}\right\|_{L^\infty(\Q)} \right)\|\u^{0}(t)\|_{m,\co}\\
             \lesssim 
             &
             \|\u^{0}(t)\|_{W^{1,\infty}(\Q)}\|\u^{0}(t)\|_{m,\co},
        \end{aligned}
    \end{equation}
    where we have used Hardy's inequality.
    To estimate the other terms given in \eqref{4.16}, we  use the following generalized Sobolev-Gagliardo-Nirenberg-Moser inequality, its proof can be found \cite{gues1990probleme, adams2003sobolev} for  example.
    \begin{lemma}\label{lemma-1}
        For $k\ge 2$, $u,v \in L^\infty\cap H^k_{\co}(\Q)$, we have
        \begin{equation*}
            \|Z^{\alpha_1} u Z^{\alpha_2} v\| \lesssim \|u\|_{L^\infty} \|v\|_{k,\co} +  \|u\|_{k,\co} \|v\|_{L^\infty}, \quad |\alpha_1| + |\alpha_2| = k.
        \end{equation*}
    \end{lemma}
    Thus for $\beta\ne 0, |\beta| + |\gamma| = |\alpha|\le m$, we have
    \begin{equation}\label{4.18}
        \begin{aligned}
            & \quad \|c_{\beta, \gamma} Z^{\beta} \u^{0} \cdot Z^{\gamma} \nabla \u^{0}(t)\|\\
            \le & \|\u^{0}(t)\|_{W^{1,\infty}_{\co}(\Q)} \|\nabla \u^{0}(t)\|_{m-1,\co} +  \|\u^{0}(t)\|_{m,\co} \|\u^{0}(t)\|_{W^{1,\infty}(\Q)}.
        \end{aligned}
    \end{equation}
    
    Substituting estimates \eqref{4.14}-\eqref{4.18} into \eqref{4.13}, it follows
    \begin{equation}\label{est-E,0-3}
        \begin{aligned}
            \frac{\mathrm{d}}{2\mathrm{d} t} \|\u^{0}(t)\|^2_{m,\co} \lesssim 
            &
            \|\nabla p^{0}(t)\|_{m-1,\co} \|\u^{0}(t)\|_{m,\co} + \|\u^{0}(t)\|_{W^{1,\infty}(\Q)}\|\u^{0}(t)\|^2_{m,\co}\\
            & + \|\u^{0}(t)\|_{W^{1,\infty}_{\co}(\Q)} \|\omega^{0}(t)\|^2_{m-1,\co} \\
             \lesssim 
             &
             \|\nabla p^{0}(t)\|^2_{m-1,\co}  + \left(1+Q^{\frac{1}{2}}_4(\u^{0};t)\right) Q_m(\u^{0}; t).
        \end{aligned}
    \end{equation}

    \noindent \underline{\textbf{Conormal estimates for $\omega^{0}$.}}

    The vorticity equation of \eqref{limit} is simply a transport equation
    \begin{equation}\label{E,0-vor}
        \partial_t \omega^{0} + \u^{0} \cdot \nabla \omega^{0} = 0.
    \end{equation}
    By applying $Z^\alpha$ ($|\alpha| \le m-1$) to the equation \eqref{E,0-vor}, we obtain
    \begin{equation}\label{est-E,0-vor-co}
        \partial_t( Z^\alpha \omega^{0}) + \left(\u^{0} \cdot \nabla\right) Z^\alpha \omega^{0} = [\u^{0} \cdot \nabla, Z^\alpha] \omega^{0}.
    \end{equation}
    By using the energy method for the above equation, it gives
    \begin{equation*}
        \begin{aligned}
            &\quad \frac{\mathrm{d}}{2\mathrm{d}t} \|Z^\alpha \omega^{0}(t)\|^2 \le |\inp*{[\u^{0} \cdot \nabla, Z^\alpha] \omega^{0}}{Z^\alpha \omega^{0}(t)}|\\
            \lesssim & \left(\sum_{\beta + \gamma = \alpha, \beta \ne 0}\left\|c_{\beta,\gamma}Z^\beta \u^{0} \cdot Z^\gamma \nabla \omega^{0}(t)\right\| + \left\|\left(\u^{0}\cdot [Z^\alpha, \nabla] \right)\omega^{0}(t)\right\|\right) \|Z^\alpha \omega^{0}(t)\|.
        \end{aligned}
    \end{equation*}
    for which by a similar argument as in \eqref{est-E,0-4} and with the help of \eqref{Q4}, we have
    \begin{equation*}
        \begin{aligned}
            \left\|\left(\u^{0}\cdot [Z^\alpha, \nabla] \right)\omega^{0}(t)\right\|
            \lesssim \|\u^{0}(t)\|_{W^{1,\infty}(\Q)} \|\omega^{0}(t)\|_{m-1,\co} \lesssim Q^{\frac{1}{2}}_4(\u^{0};t) Q^{\frac{1}{2}}_m(\u^{0};t),
        \end{aligned}
    \end{equation*}
    and also by Lemma \ref{lemma-1} and Hardy's inequality,
    \begin{equation*}
        \begin{aligned}
        & \quad \sum_{\beta+ \gamma = \alpha, \beta\ne 0}\|c_{\beta,\gamma}Z^\beta \u^{0} \cdot Z^\gamma \nabla \omega^{0}(t)\| \\
        \lesssim & \sum_{\beta + \gamma \le \alpha, |\gamma|\le m-2}\left\| Z^\beta \left(\frac{ u^{0}}{\chi(x)} \right) Z^\gamma (\chi(x)\partial_x) \omega^{0}(t)\right\| + \left\| Z^\beta \left(\frac{ v^{0}}{\chi(y)} \right) Z^\gamma (\chi(y)\partial_y) \omega^{0}(t)\right\| \\
        \lesssim & \|\u^{0}(t)\|_{W^{2,\infty}(\Q)} \|\omega^{0}(t)\|_{m-1,\co}\\
        & \hspace{.2in} + \left(\|\u^{0}(t)\|_{m-1,\co} + \|\nabla \u^{0}(t)\|_{m-1,\co} \right)\|\omega^{0}(t)\|_{W^{1,\infty}_{\co}(\Q)}\\
        \lesssim & Q^{\frac{1}{2}}_4(\u^{0};t) Q^{\frac{1}{2}}_m (\u^{0};t),
        \end{aligned}
    \end{equation*}
    where we have used \eqref{Q4} and \eqref{est-E,0-Hodge} for the last inequality.
    Then we conclude that
    \begin{equation}\label{est-E,0-5}
        \begin{aligned}
            \frac{\mathrm{d}}{2\mathrm{d}t} \|\omega^{0}(t)\|^2_{m-1,\co} \le & \sum_{|\alpha|\le m-1}|\inp*{[\u^{0} \cdot \nabla, Z^\alpha] \omega^{0}}{Z^\alpha \omega^{0}(t)}|\\
            \lesssim & Q^{\frac{1}{2}}_4(\u^{0}; t) Q_m(\u^{0}; t).
        \end{aligned}
    \end{equation}

    \noindent \underline{\textbf{Pressure estimates.}}
    Now we estimate $\|\nabla p^{0}(t)\|^2_{m-1,\co}$.
    By taking the divergence of \eqref{limit} and from the boundary conditions $(\u^{0}\cdot \vec{n})_{|\partial\Q} = 0$, we get that $p^{0}$ solves
    \begin{equation}\label{est-E,0-p}
        \begin{cases}
            - \Delta p^{0} = \nabla\cdot \left(\u^{0} \cdot \nabla \u^{0}\right), \quad \text{in }\Q,\\
        {\partial_x p^{0}}_{|x=0} = 0, \quad {\partial_y p^{0}}_{|y=0} = 0.
        \end{cases}
    \end{equation}
    This means that $p^{0}$ corresponds to the gradient part of the usual Leray-Hodge decomposition of the vector field $\u^{0}\cdot \nabla \u^{0}$. 
    
    By standard energy method, one gets
    \begin{equation*}
        \begin{aligned}
            \|\nabla p^{0}(t)\|^2 = -\inp*{\nabla p^{0}}{\u^{0}\cdot \nabla \u^{0}(t)} \le \|\nabla p^{0}(t)\| \|\u^{0}\cdot \nabla \u^{0}(t)\|,
        \end{aligned}
    \end{equation*}
    which implies
    \begin{equation*}
        \|\nabla p^{0}(t)\| \le \|\u^{0}\cdot \nabla \u^{0}(t)\|.
    \end{equation*}
    For $k \ge 1$ and $|\alpha| = k$, by applying $Z^k$ to \eqref{est-E,0-p} it gets
    \begin{equation*}
        - \Delta \left(Z^\alpha p^{0}\right) = \nabla \cdot \left(Z^\alpha(\u^{0}\cdot \nabla \u^{0})\right) + [Z^\alpha, \nabla \cdot] \left(\u^{0}\cdot \nabla \u^{0}\right) - [Z^\alpha, \Delta] p^{0}.
    \end{equation*}
    Then we have
    \begin{equation}\label{4.24}
        \begin{aligned}
            \|\nabla \left(Z^\alpha p^{0}\right)(t)\|^2 = & - \inp*{Z^\alpha \left(\u^{0}\cdot \nabla \u^{0}\right)}{\nabla \left(Z^\alpha p^{0}\right)(t)} \\
            & + \inp*{[Z^\alpha, \nabla \cdot] \left(\u^{0}\cdot \nabla \u^{0}\right)}{Z^\alpha p^{0}(t)} - \inp*{[Z^\alpha, \Delta] p^{0}}{Z^\alpha p^{0}(t)}.
        \end{aligned}
    \end{equation}
    Noting that whenever $\alpha_1 \ge 1$, ${Z^\alpha p^{E,0}}_{|x=0} = 0$, then by integration by parts, we have
    \begin{equation*}
        \begin{aligned}
            & \quad  \inp*{[Z^\alpha, \partial_x] \left(\u^{0} \cdot \nabla u^{0}\right)}{Z^\alpha p^{0}(t)}\\
            = & \sum_{0\le j\le \alpha_1-1} \inp*{\partial_x\left( c'_{\alpha_1,j}(x) (\chi(x)\partial_x)^{j} (\chi(y)\partial_y)^{\alpha_2} \left(\u^{0} \cdot \nabla u^{0}\right) \right)}{Z^\alpha p^{0}(t)}\\
            = & -\sum_{0\le j\le \alpha_1-1} \inp*{ c'_{\alpha_1,j}(x) (\chi(x)\partial_x)^{j} (\chi(y)\partial_y)^{\alpha_2} \left(\u^{0} \cdot \nabla u^{0}\right) }{\partial_x \left( Z^\alpha p^{0} \right) (t)}\\
            \lesssim & \|\u^{0} \cdot \nabla u^{0}(t)\|_{k-1,\co}\|\partial_x \left( Z^\alpha p^{0}\right) (t)\|,
        \end{aligned}
    \end{equation*}
    and
    \begin{equation*}
        \begin{aligned}
            & \quad - \inp*{[Z^\alpha, \partial_x^2] p^{E,0}}{Z^\alpha p^{E,0}(t)} = \inp*{\left( \partial_x[Z^\alpha, \partial_x] + [Z^\alpha, \partial_x]\partial_x \right) p^{E,0}}{Z^\alpha p^{E,0}(t)}\\
            = & - \sum_{0\le j\le \alpha_1 -1} \inp*{\partial_x \left( c''_{\alpha_1,j}(x) \partial_x (\chi(x)\partial_x)^{j} (\chi(y)\partial_y)^{\alpha_2} p^{E,0}\right)}{Z^\alpha p^{E,0}(t)}\\
            = & \sum_{0\le j\le \alpha_1 -1} \inp*{c''_{\alpha_1,j}(x) \partial_x \left(  (\chi(x)\partial_x)^{j} (\chi(y)\partial_y)^{\alpha_2} p^{E,0}\right)}{\partial_x \left( Z^\alpha p^{E,0}\right) (t)}\\
            \lesssim & \sum_{|\beta| \le k-1}\|\partial_x \left(Z^\beta p^{E,0}\right)(t)\| \|\partial_x \left(Z^\alpha p^{E,0}\right)(t)\|,
        \end{aligned}
    \end{equation*}
    where $c'_{\alpha_1,j}(x), c''_{\alpha_1,j}(x)$ are smooth functions.
    Thus, from \eqref{4.24} one gets
    \begin{equation*}
        \begin{aligned}
            \|\nabla \left(Z^\alpha p^{0}\right)(t)\|  \lesssim \|\u^{0} \cdot \nabla \u^{0}(t)\|_{k,\co} + \sum_{|\beta| \le k-1}\|\nabla \left(Z^\beta p^{0}\right)(t)\| .
        \end{aligned}
    \end{equation*}
    Taking induction on $0\le k\le m-1$, we have
    \begin{equation*}
        \begin{aligned}
            \|\nabla p^{0}(t)\|_{m-1,\co} \lesssim \|\u^{0} \cdot \nabla \u^{0}(t)\|_{m-1,\co}.
        \end{aligned}
    \end{equation*}
    Now by using Lemma \ref{lemma-1} and \eqref{Q4}, we obtain
    \begin{equation}\label{est-E,0-6}
        \begin{aligned}
            \|\nabla p^{0}(t)\|_{m-1,\co} \lesssim & \|\u^{0}(t)\|_{L^\infty(\Q)} \|\nabla \u^{0}(t)\|_{m-1,\co} + \|\u^{0}(t)\|_{m-1,\co} \|\nabla \u^{0}(t)\|_{L^\infty(\Q)}\\
            \lesssim & \|\u^{0}(t)\|_{W^{1,\infty}(\Q)} \left(\|\u^{0}(t)\|_{m-1,\co} + \|\omega^{0}(t)\|_{m-1,\co} \right) \\
            \lesssim & Q^{\frac{1}{2}}_{4}(\u^{0};t) Q^{\frac{1}{2}}_{m}(\u^{0};t).
        \end{aligned}
    \end{equation}


\noindent\underline{\textbf{$L^\infty$ estimates.}}
Combining \eqref{est-E,0-3}, \eqref{est-E,0-5} with \eqref{est-E,0-6}, we have
\begin{equation}\label{est-E,0-9}
    \begin{aligned}
        \frac{\mathrm{d}}{\mathrm{d} t} \left(\|\u^{0}(t)\|^2_{m,\co} + \|\omega^{0}(t)\|^2_{m-1,\co}\right) \lesssim & 
        (1+Q^{\frac{1}{2}}_4(\u^{0};t))^2  Q_m(\u^{0}; t).
    \end{aligned}
\end{equation}
It remains to estimate $\|\omega^{0}(t)\|^2_{W^{1,\infty}(\Q)}$ 
.
From the transport equation \eqref{E,0-vor}, we have
\begin{equation}\label{est-E,0-7}
    \begin{aligned}
        \|\omega^{0}(t)\|^2_{W^{1,\infty}(\Q)} \le & \|\omega_0\|^2_{W^{1,\infty}(\Q)} + t \int_0^t \left\|\nabla \u^{0} \cdot \nabla \omega^{0}(s)\right\|^2_{L^{\infty}(\Q)} \mathrm{d} s\\
        \le & \|\omega_0\|^2_{W^{1,\infty}(\Q)} + C t \int_0^t Q^2_4(\u^{0};t) \mathrm{d} s.
    \end{aligned}
\end{equation}


Finally, integrating \eqref{est-E,0-9} over $(0,t)$ and combining \eqref{est-E,0-7}, we get
\begin{equation}\label{est-E,0'}
    \begin{aligned}
        Q_m(\u^{0}; t) \le & Q_m(\u^{0}; 0) + C t\int_0^t Q_4^2(\u^{0}; s) \,\mathrm{d} s \\
        & + C\int_0^t \left(1+ Q^{\frac{1}{2}}_4(\u^{0}; s)\right)^2  Q_m(\u^{0}; s) \,\mathrm{d} s.
    \end{aligned}
\end{equation}
Then there exists a $T>0$ such that $\sup_{t\in [0,T]} Q_m(\u^{0}; t)$ is bounded, it finishes the proof of the estimate \eqref{est-E,0}.

\end{proof}

The next result shows that when the initial data has additional regularity away from the corner, then the solution given in Proposition \ref{prop-Euler} is also more regular away from the corner.
\begin{proposition}\label{prop-E,0-omega}
    Let $m$ be an integer satisfying $m\ge 4$, $\u_0 \in E^m$ be the initial data given in Proposition \ref{prop-Euler}, having additional regularity, $\nabla \times \u_0 \in H^{m-1}(\Q)$, then the velocity $\u^{0}$ of the solution to \eqref{limit} given in Proposition \ref{prop-Euler} belongs to $H^m(\Q)$.
\end{proposition}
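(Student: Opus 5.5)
The plan is to upgrade the conormal information of Proposition \ref{prop-Euler} to full Sobolev regularity in two steps. First we show that the vorticity keeps its $H^{m-1}$ regularity, and then we recover $\nabla^m\u^0$ from $\omega^0$ using Lemma \ref{lemma-tan-vor} away from the corner and elliptic regularity near it.

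\textbf{Step 1: propagating $H^{m-1}$ for the vorticity.} We propagate $\omega^0\in L^\infty(0,T;H^{m-1}(\Q))$ through the transport equation \eqref{E,0-vor}. Since $\u^0\cdot\vec n=0$ on $\partial\Q$, the boundary contributions vanish. We apply $\partial^\beta$ with $|\beta|\le m-1$ (full derivatives, not conormal ones) and do the $L^2$ energy estimate, which gives
$$\frac{\d}{2\d t}\|\partial^\beta\omega^0\|^2\le \|[\partial^\beta,\u^0\cdot\nabla]\omega^0\|\,\|\partial^\beta\omega^0\|.$$
The commutator is a sum of terms $\partial^{\beta'}\u^0\,\partial^{\beta''}\nabla\omega^0$ with $|\beta'|\ge1$. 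By the Moser-type estimate, its $L^2$ norm is bounded by
$$\|\nabla\u^0\|_{L^\infty}\|\omega^0\|_{H^{m-1}}+\|\nabla\omega^0\|_{L^\infty}\|\nabla\u^0\|_{H^{m-2}}.$$
Both $L^\infty$ factors are already controlled by \eqref{est-E,0} and \eqref{Q4}. The only delicate factor is $\|\nabla\u^0\|_{H^{m-2}}$, which requires full regularity of $\u^0$ of one order less than we are trying to prove.

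\textbf{Step 2: recovering $\nabla^k\u^0$ from $\omega^0$.} For every $k\le m$ we want
$$\|\nabla^k\u^0\|_{L^2(\Q)}\lesssim \|\u^0\|_{m,\co}+\|\omega^0\|_{H^{k-1}(\Q)}.$$
Near the corner, on $\Q_\delta$, $\u^0$ is harmonic, and \eqref{4.9} gives the $H^m(\Q_1)$ bound by the conormal norm (rescale if $\delta<2$). Away from the corner we use \eqref{4.4} of Lemma \ref{lemma-tan-vor}. Inserting this bound with $k=m-1$ into Step 1 gives
$$\frac{\d}{\d t}\|\omega^0\|_{H^{m-1}}^2\lesssim C\big(1+\|\omega^0\|_{H^{m-1}}^2\big),$$
so Gronwall yields a bound for $\|\omega^0\|_{H^{m-1}}$ on $[0,T]$. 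The bound with $k=m$ then gives $\u^0\in L^\infty(0,T;H^m(\Q))$.

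\textbf{Main obstacle: justification.} The difficulty is in justifying the computation, not in the estimate itself. The a priori computation in Step 1 presumes $\omega^0(t)\in H^{m-1}$, so we carry it out on the smooth approximations $\widetilde{\u}^{(k)}$ built in the proof of Proposition \ref{prop-Euler}. There is one subtlety. The even-odd extension of $\omega_0$ is odd-odd, but it is not in $H^{m-1}(\R^2)$ in general, since its traces on the axes need not vanish. We therefore do the estimate directly in $\Q$, where integration by parts is legitimate because $\u^0\cdot\vec n=0$. We regularize the initial vorticity within $H^{m-1}(\Q)$ while keeping its support away from the corner, so that the approximating solutions still satisfy \eqref{cdt-vor'}. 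The uniform bounds then pass to the limit by weak lower semicontinuity.
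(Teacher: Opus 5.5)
Your a priori estimate is sound and uses the same ingredients as the paper: Lemma \ref{lemma-tan-vor} away from the corner, \eqref{4.9} near it, and the $L^\infty$ bounds on $\nabla\u^0$ and $\nabla\omega^0$ from Proposition \ref{prop-Euler}. The Kato--Ponce/Moser commutator bound is a compact replacement for the paper's term-by-term estimates.

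The gap is in the justification, which for a propagation-of-regularity statement is the actual content. You need approximate Euler solutions in $\Q$ whose vorticity lies in $H^{m-1}(\Q)$ for positive times, and nothing provides them.
\begin{itemize}
\item Proposition \ref{prop-Euler} gives only conormal regularity.
\item Its construction through the even-odd extension produces smooth solutions on $\R^2$ whose vorticity is odd in $x$ and in $y$, hence vanishes on both axes. By continuity of traces, such approximations cannot stay bounded in $H^1(\Q)$ when $\omega_0|_{\partial\Q}\neq 0$. This is the obstruction you noticed, and it rules out those approximate solutions themselves, not only the extended datum.
\item ``Regularizing within $H^{m-1}(\Q)$'' does not help. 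Showing that the corresponding solutions in $\Q$ have vorticity in $H^{m-1}(\Q)$ (or smoother) is the proposition itself at a higher order, so the argument is circular as stated.
\end{itemize}

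The missing idea is to linearize by induction on the order, which your own commutator bound already permits. At order $k$ it involves only $\|\nabla\u^0\|_{H^{k-1}}$, i.e.\ (via Lemma \ref{lemma-tan-vor} and \eqref{4.9}) only $\|\omega^0\|_{H^{k-1}}$. So assume $\omega^0\in L^\infty(0,T;H^{k-1}(\Q))$, hence $\u^0\in L^\infty(0,T;H^{k}(\Q))$. Then the equation satisfied by $\nabla^k\omega^0$ in the sense of distributions is a linear transport system with:
\begin{itemize}
\item a fixed Lipschitz field tangent to $\partial\Q$, so no boundary condition is needed;
\item a bounded zero-order coefficient $k\nabla\u^0$;
\item a forcing already known to lie in $L^\infty(0,T;L^2(\Q))$.
\end{itemize}
Well-posedness of this linear problem in $L^\infty(0,T;L^2(\Q))$ then yields $\nabla^k\omega^0\in L^\infty(0,T;L^2(\Q))$ for the given solution itself, with no new nonlinear approximate solutions. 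This is exactly how the paper proceeds, starting from $k=1$, where the system for $\nabla\omega^0$ has no forcing at all.
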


\begin{proof}
    Without loss of regularity, we still set $\delta = 2$.
    We shall take induction to prove that $\nabla^k \u^{0}\in L^\infty(0,T;\Q \backslash \Q_1)$ for all $2\le k\le m$. 
By Lemma \ref{lemma-tan-vor}, it suffices to prove that $\nabla^{k} \omega^{0} \in L^\infty(0,T; \Sigma_1 \cup \Sigma_2)$ for all $1\le k\le m-1$.

    For $k=1$, we have in the distribution sense:
    \begin{equation}\label{reg-E,0-1}
        \partial_t (\nabla \omega^{0}) + \u^{0} \cdot \nabla (\nabla \omega^{0}) + \nabla \u^{0}\cdot \nabla \omega^{0} = 0.
    \end{equation}
    Here $\u^{0} \in L^\infty(0,T; W^{2,\infty}(\Q))$ having pointwise meaning is a known function.
    \eqref{reg-E,0-1} is a linear transport system on $\nabla\omega^{0} = (\partial_x \omega^{0}, \partial_y \omega^{0})$, and does not need any boundary condition. Therefore, the well-posedness of \eqref{reg-E,0-1} in $L^\infty(0,T; L^2(\Q))$ shows that $\omega^{0} \in L^\infty(0,T;H^1(\Q))$.
    

    For $2 \le k \le m-1$, assume that we already have $\omega^{0} \in L^\infty(0,T; H^{k-1}(\Q))$, then by Lemma \ref{lemma-tan-vor}, $\u^{0} \in L^\infty(0,T; H^{k}(\Q))$. In the distribution sense, one has
    \begin{equation}\label{reg-E,0-2}
        \partial_t (\nabla^{k} \omega^{0}) + \u^{0}\cdot \nabla (\nabla^{k} \omega^{0}) + k \nabla \u^{0} \cdot \nabla^k \omega^{0} = \sum_{2\le j\le k} c_{k,j} \nabla^j \u^{0} \cdot \nabla^{k+1-j} \omega^{0},
    \end{equation}
    where $c_{k,j}$ are constants.
    The left-hand side of \eqref{reg-E,0-2} is still a transport operator with $\u^{0} \in L^\infty(0,T;W^{2,\infty}(\Q))$, and
    \begin{equation*}
        \begin{aligned}
            &\quad \|(\text{RHS of \eqref{reg-E,0-2}})(t)\|_{L^2(\Q)}  \lesssim 
            \|\u^{0}(t)\|_{W^{2,\infty}(\Q)} \|\omega^{0}(t)\|_{H^{k-1}(\Q)} \\
            & \hspace{.5in}
            + \|\u^{0}(t)\|_{H^k(\Q)} \|\nabla \omega^{0}(t)\|_{L^\infty(\Q)} 
            + \sum_{3 \le j \le k-1} \|\nabla^{j} \u^{0}(t)\cdot \nabla^{k-j+1} \omega^{0}(t)\|_{L^2(\Q)}.
        \end{aligned}
    \end{equation*}
    The last summation is zero as $k\le 3$, and when $k\ge 4$, we use Lemmas \ref{lemma-tan-vor} and \ref{lemma-1} to obtain
    \begin{equation*}
        \begin{aligned}
            & \sum_{3 \le j \le k-1} \|\nabla^{j} \u^{0}(t)\cdot \nabla^{k-j+1} \omega^{0}(t)\|_{L^2(\Q)} \\
            & \qquad \lesssim \sum_{3 \le j \le k-1} \|(Z^{j} \u^{0} + \nabla^{j-1} \omega^{0})(t)\cdot \nabla^{k-j+1} \omega^{0}(t)\|_{L^2(\Q)} \\
            & \qquad \lesssim \sum_{3 \le j \le k-1} \Big(\|Z^j \u^{0}(t)\|_{H^1(\Q)} \|\nabla^{k-j+1}\omega^{0}(t)\|_{H^1(\Q)} \\
            & \qquad \quad + \|\nabla^{j-2} (\nabla \omega^{0}) \cdot \nabla^{k-j} (\nabla \omega^{0})(t)\|_{L^2(\Q)} \Big)\\
            & \qquad \lesssim \left( \|\u^{0}(t)\|_{H^1_{x,y}H^{k-1}_{\co}(\Q)} + \|\nabla \omega^{0}(t)\|_{L^\infty(\Q)} \right) \|\omega^{0}(t)\|_{H^{k-1}(\Q)}.
        \end{aligned}
    \end{equation*}
    Thus, we use \eqref{est-E,0-Hodge} again and obtain that
    \begin{equation*}
        \begin{aligned}
            &\quad \|(\text{RHS of \eqref{reg-E,0-2}})(t)\|_{L^2(\Q)}  \lesssim Q^{\frac{1}{2}}_m(\u^{0};t) \left(\|\u^{0}(t)\|_{H^k(\Q)} + \|\omega^{0}(t)\|_{H^{k-1}(\Q)}\right),
        \end{aligned}
    \end{equation*}
    which is bounded in $L^\infty(0,T)$. Therefore, the well-posedness of \eqref{reg-E,0-2} shows that $\omega^{0} \in L^\infty(0,T; H^{m-1}(\Q))$, and thus $\u^{0} \in L^\infty(0,T; H^m(\Q))$.

\end{proof}

Combining Propositions \ref{prop-Euler} and \ref{prop-E,0-omega} it finishes the proof of Theorem \ref{thm-Euler}.

At the end of this subsection, we list some trace properties that will be useful in the following subsections.
\begin{lemma}\label{lemma-symm}
For $m=4$, 
the velocity of the solution to \eqref{limit} given in Theorem \ref{prop-Euler} has the following properties: for $l = 1,2,3$,
    \begin{equation*}
        \widehat{\dy^l u^{0}} = \widehat{\dx^2 u^{0}} = \widehat{\dx \dy u^{0}} = 0, \quad
        \widehat{\dx^l v^{0}} = \widehat{\dy^2 v^{0}} = \widehat{\dx\dy v^{0}} = 0.
    \end{equation*}
\end{lemma}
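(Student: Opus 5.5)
The plan is to work entirely in the corner cube $\Q_\delta$, where by Proposition \ref{prop-Euler} the vorticity $\omega^{0}$ vanishes. Without loss of generality take $\delta=2$ as in that proof. There we already know that the even--odd extension $\widetilde{\u}^{0}$ is harmonic in $\widetilde{\Q}_2=(-2,2)^2$: $\widetilde u^{0}$ is odd in $x$ and even in $y$, while $\widetilde v^{0}$ is even in $x$ and odd in $y$. Since $\widetilde{\u}^{0}$ is harmonic, it is real-analytic in $\widetilde{\Q}_2$, so all derivatives at the origin are classical and continuous. (For $m=4$ one also has $\u^{0}\in H^4\subset C^2$, which covers the derivatives up to order two. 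The third-order ones follow from analyticity, or from the $W^{2,\infty}$ bound \eqref{Q4} combined with harmonicity.) Hence $\widehat{\partial^\beta u^{0}}=\partial^\beta\widetilde u^{0}(0,0)$ for every multi-index $\beta$.

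The parity observations give almost everything.

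\textbf{Step 1.} Because $\widetilde u^{0}$ is odd in $x$, every derivative $\partial_x^{a}\partial_y^{b}\widetilde u^{0}$ with $a$ even is odd in $x$, so it vanishes on $\{x=0\}$ and in particular at $\mathbf 0$. This already gives $\widehat{\dy^l u^{0}}=0$ for $l=1,2,3$ (take $a=0$) and $\widehat{\dx^2 u^{0}}=0$ (take $a=2$, $b=0$).

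\textbf{Step 2.} Because $\widetilde u^{0}$ is even in $y$, derivatives with $b$ odd vanish on $\{y=0\}$. This yields $\widehat{\dx\dy u^{0}}=0$ (take $a=1$, $b=1$), and it gives a second proof of the vanishing of $\widehat{\dy u^{0}}$ and $\widehat{\dy^3 u^{0}}$.

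\textbf{Step 3.} The same argument with the roles of $x$ and $y$ exchanged gives the statements for $v^{0}$: use oddness of $\widetilde v^{0}$ in $y$ for $\widehat{\dx^l v^{0}}$ and $\widehat{\dy^2 v^{0}}$, and evenness of $\widetilde v^{0}$ in $x$ for $\widehat{\dx\dy v^{0}}$.

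The only step needing care is the justification that the extended field is genuinely smooth across the axes near the corner. This was asserted in the proof of Proposition \ref{prop-Euler}, and I would recall why it holds. Since $\omega^{0}=0$ and $\nabla\cdot\u^{0}=0$ in $\Q_2$, the complex function $u^{0}-iv^{0}$ is holomorphic there, with boundary conditions $u^{0}=0$ on $x=0$ and $v^{0}=0$ on $y=0$. By the Schwarz reflection principle (reflecting twice), the extension defined above is holomorphic in $\widetilde{\Q}_2$. The parities are exactly those of the reflection, so the vanishing traces of Step 1 are automatic.

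One can also confirm the claims directly, as a cross-check, from the identities $\dx u^{0}=-\dy v^{0}$ and $\dy u^{0}=\dx v^{0}$ in $\Q_2$. For example, $\dy u^{0}(0,y)=\dx v^{0}(0,y)$, and the right-hand side vanishes for the even extension of $v^{0}$ in $x$, so $\widehat{\dy u^{0}}=0$. Likewise $\dx\dy u^{0}=-\dy^2 v^{0}$ vanishes on $y=0$ because $v^{0}$ is odd in $y$. I expect no genuine obstacle beyond this reflection-regularity point.
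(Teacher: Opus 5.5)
Your proof is correct, but it takes a somewhat different route from the paper.

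\textbf{The paper's route.} The paper's proof is a one-line appeal to the boundary conditions $u^{0}|_{x=0}=v^{0}|_{y=0}=0$ together with $\nabla\cdot\u^{0}=\nabla\times\u^{0}=0$ in $(0,\delta)^2$. Differentiating the boundary conditions tangentially gives $\widehat{\dy^l u^{0}}=\widehat{\dx^l v^{0}}=0$. The Cauchy--Riemann relations $\dx u^{0}=-\dy v^{0}$, $\dy u^{0}=\dx v^{0}$ (hence $\Delta u^{0}=\Delta v^{0}=0$) then turn the remaining derivatives into tangential derivatives on the other edge. For example, $\dx\dy u^{0}=\dx^2 v^{0}$ vanishes on $\{y=0\}$, and $\dx^2 u^{0}=-\dy^2 u^{0}$ vanishes on $\{x=0\}$. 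This is essentially your closing cross-check.

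\textbf{Your route.} Your main argument instead passes through the doubly Schwarz-reflected holomorphic function $u^{0}-iv^{0}$ on $(-2,2)^2$ and reads everything off from parity. This buys two things. First, it gives the general rule: at the corner, $\dx^a\dy^b u^{0}$ vanishes unless $a$ is odd and $b$ is even, and symmetrically for $v^{0}$. Second, via analyticity it shows that the third-order corner values such as $\widehat{\dy^3 u^{0}}$ and $\widehat{\dx^3 v^{0}}$ are meaningful at all. The embedding $H^4\subset C^{2,\gamma}$ does not give this, and the paper's one-liner tacitly assumes it.

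\textbf{Comparison.} The paper's argument is shorter and works directly on the two edges, at the cost of leaving that regularity point implicit. Your reflection makes it explicit. The reflection across $\{x=0\}\times(-2,2)$ does pass through the corner, but this is harmless: $u^{0}-iv^{0}$ is continuous up to $\mathbf{0}$ and vanishes there.
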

\begin{proof}
    This is a direct conclusion from ${u^{0}}_{|x=0} = {v^{0}}_{|y=0} = 0$, $\nabla \cdot \u^{0} = 0$ and $\nabla \times \u^{0} = 0$ in $(0,\delta)^2$.
\end{proof}

\subsection{Boundary layer profiles along the edges}

From now on, we fix $m= 4$, and let $(\u^{E,0},p^{E,0})$ be the solution to \eqref{limit} with initial data $\u_0$, and $T>0$ be the time span of the existence given in Theorem \ref{thm-Euler}.
In this section, we are going to study the well-posedness and estimates of the boundary layer profiles $(u^{B,1,2}, v^{B,2,2})(t,x,Y)$ along the edge $\{y=0\}$ to the problem \eqref{B-1-2}. The  boundary layer profiles $(v^{B,1,1}, u^{B,2,1})(t,X,y)$ along the other edge $\{x=0\}$ to the problem \eqref{B-1-1} can be studied by an analogous procedure. 

First, we introduce the weighted anisotropic Sobolev space $H^{k,m,p}_{x, Y}$ for fixed integers $k, m$ and $p$ as follows 
\begin{equation*}
    H^{k,m,p}_{x,Y}(\Omega) = \{u \in L^2(\Omega): \langle Y\rangle^k \partial_x^l \partial_Y^j u \in L^2(\Omega), \quad \text{for }l\le m, j\le p\},
\end{equation*}
equipped with the norm
\begin{equation*}
    \|u\|_{H^{k,m,p}_{x,Y}(\Omega)} = \sum_{0\le j\le p} \|u\|_{k,m,j} = \sum_{0\le j \le p} \sum_{l \le m} \|\langle Y\rangle^k  \partial_{x}^{l} \partial_Y^j u\|_{L^2(\Omega)},
\end{equation*}
where $\Omega$ is either  $\R^2_+ = \{x\in \R, Y>0\}$ or $\Q_{x,Y}:=\{x,Y>0\}$, and $\langle Y \rangle = (1+Y^2)^{\frac{1}{2}}$.
Similarly, the space $H^{k,m}_Y(\R_+)$ is defined as the closure of $C^\infty_c(\overline{\R}_+)$ with respect to the norm
\begin{equation*}
    \|u\|_{H^{k,m}_Y(\R_+)} = \sum_{0\le j\le m} \|\langle Y\rangle^k \dY^j u\|_{L^2(\R_+)}.
\end{equation*}

\begin{proposition}\label{prop-Prandtl}
    For a given $u^{B,1}_0 \in \bigcap_{k\in \mathbb{N}} H^{k,2,0}_{x,Y}(\Q_{x,Y})$,  
    there exists a unique solution $(u^{B,1,2},v^{B,2,2})$  on $(0,T)$ to the problem \eqref{B-1-2} with the initial data $u^{B,1,2}|_{t=0}=u_0^{B,1}(x,Y)$, satisfying
    \begin{equation}\label{pro4.3-1}
        u^{B,1,2} \in L^\infty(0,T; H^{k, 2, 0}_{x,Y}(\Q_{x,Y}))\cap L^2(0,T; H^{k,2,1}_{x,Y}(\Q_{x,Y})),
    \end{equation}
    \begin{equation}\label{pro4.3-2}
        v^{B,2,2} \in L^\infty(0,T; H^{k,1,1}_{x,Y} (\Q_{x,Y})),
    \end{equation}
    for all $k \in \mathbb{N}$, and
    \begin{equation}\label{pro4.3-3}
    \int_0^\infty \dt u^{B,1,2} \, \d Y \in L^\infty(0,T; H^1_x(\R_+)),\quad \int_0^\infty \overline{u^{B,1,2}} \, \d Y = 0.
    \end{equation}
    If, in addition, $u^{B,1}_0 \in H^{0,1,2}_{x,Y}(\Q_{x,Y})$, then
\begin{equation}\label{pro4.3-Linfty}
\partial_Y u^{B,1,2}
\in L^\infty\big((0,T)\times\Q_{x,Y}\big).
\end{equation}
    Moreover, if additionally, $\overline{u^{B,1}_0} \in \bigcap_{k\in \mathbb{N}}H^{k,2}_{Y}(\R_+)$, then the trace of $u^{B,1,2}$ satisfies 
    \begin{equation}\label{pro4.3-4}
        \begin{aligned}
            & \overline{u^{B,1,2}} \in L^\infty(0,T; H^{k,2}_Y(\R_+)) \cap L^2(0,T; H^{k,3}_Y(\R_+)), \\
            & \dt \overline{u^{B,1,2}} \in L^\infty(0,T; H^{k,0}_Y(\R_+)) \cap L^2(0,T; H^{k,1}_Y(\R_+)), 
        \end{aligned}
    \end{equation}
for all $k \in \mathbb{N}$, with $\bar{f}$ denoting the trace of the related function on $\{x=0\}$.
\end{proposition}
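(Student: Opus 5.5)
The plan is to treat \eqref{B-1-2} as a linear degenerate parabolic equation in $(t,x,Y)$ that is transport in $x$ and diffusive in $Y$, and to run weighted energy estimates. The coefficients $\underline{u^{E,0}}$, $\underline{\partial_x u^{E,0}}$ and $\underline{\partial_y v^{E,0}}$ are controlled by Theorem \ref{thm-Euler} with $m=4$; by trace and Sobolev embedding they lie in $L^\infty(0,T;W^{2,\infty}_x)$. The key structural fact is $\underline{u^{E,0}}(t,0)=0$, so the $x$-transport $\underline{u^{E,0}}\partial_x$ is tangent to $\{x=0\}$ and no boundary condition is needed there.

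First I lift the inhomogeneous Neumann datum. Set $g(t,x):=2\alpha\underline{u^{E,0}}-\underline{\partial_y u^{E,0}}$ and $w:=u^{B,1,2}-g(t,x)\phi(Y)$, where $\phi(Y)=Ye^{-Y}$, so that $\phi'(0)=1$ and $\phi$ decays rapidly. Then $w$ satisfies the same equation with homogeneous Neumann condition and a rapidly decaying source in $L^\infty(0,T;H^{k,2,0})$; this source involves $\partial_t g$, which is available from the Euler equation.

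Next I prove the weighted estimates. I apply $\partial_x^l$ for $l\le2$, multiply by $\langle Y\rangle^{2k}\partial_x^l w$ and integrate over $\Q_{x,Y}$.
\begin{itemize}
\item The $x$-transport term gives, after integration by parts, only $\|\partial_x\underline{u^{E,0}}\|_{L^\infty}$ times the norm. The boundary term at $x=0$ vanishes because $\underline{u^{E,0}}(t,0)=0$.
\item The term $Y\underline{\partial_y v^{E,0}}\partial_Y$ produces $\langle Y\rangle^{2k}$ and $Y\partial_Y\langle Y\rangle^{2k}\lesssim\langle Y\rangle^{2k}$, so it is harmless.
\item Diffusion gives $\|\langle Y\rangle^k\partial_Y\partial_x^l w\|^2$ minus a weight-derivative term absorbed by Young's inequality. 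The boundary term at $Y=0$ vanishes by the Neumann condition.
\item Commutators from $\partial_x^l$ hitting the coefficients involve at most $\partial_x^3\underline{u^{E,0}}$, which is bounded.
\end{itemize}
Gronwall then yields \eqref{pro4.3-1}. Existence follows from a Galerkin scheme or a standard parabolic regularization $\epsilon'\partial_x^2$ together with these uniform bounds; uniqueness follows from the $L^2$ estimate.

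Then I recover $v^{B,2,2}$ from the divergence condition as $v^{B,2,2}=\int_Y^\infty\partial_x u^{B,1,2}\,dY'$. Weighted Hardy-type bounds with the weight $\langle Y\rangle^{k}$ controlling $\langle Y\rangle^{k+1}$ moments give \eqref{pro4.3-2}.

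For \eqref{pro4.3-3}, I integrate the equation in $Y$. Using $\int_0^\infty Y\partial_Y u\,dY=-\int_0^\infty u\,dY$, the quantity $I(t,x)=\int_0^\infty u^{B,1,2}dY$ satisfies
\[
\partial_t I+\underline{u^{E,0}}\partial_x I+(\underline{\partial_x u^{E,0}}-\underline{\partial_y v^{E,0}})I=-g .
\]
Since $\underline{\partial_y v^{E,0}}=-\underline{\partial_x u^{E,0}}$, this gives the bound on $\int\partial_t u^{B,1,2}$ in $H^1_x$. At $x=0$ the transport vanishes and $g(t,0)=2\alpha\hat u^{E,0}-\widehat{\partial_yu^{E,0}}=0$ by Lemma \ref{lemma-symm}. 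Hence $\overline{I}$ solves a homogeneous linear ODE, and $\overline{I}(0)=0$ by Remark \ref{rmk-1}, giving $\overline{I}\equiv0$.

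For \eqref{pro4.3-Linfty}, I differentiate the equation in $Y$ and estimate $\partial_Y u$ with the extra $H^{0,1,2}$ regularity, then use the anisotropic embedding $H^1_xH^1_Y\subset L^\infty$. This needs $\partial_Y^2u\in L^\infty_tL^2$, which I obtain from an energy estimate on $\partial_Y u$ with the boundary value of $\partial_Y^2 u$ controlled through the equation at $Y=0$.

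Finally, for \eqref{pro4.3-4}, I restrict to $x=0$. Since $\underline{u^{E,0}}(t,0)=0$, the trace $\bar u:=\overline{u^{B,1,2}}$ decouples into the closed 1D problem
\[
\partial_t\bar u+\hat{a}\,\bar u-\hat a\, Y\partial_Y\bar u=\partial_Y^2\bar u,\qquad \partial_Y\bar u|_{Y=0}=0,
\]
where $\hat a=\widehat{\partial_x u^{E,0}}$ and the Neumann datum vanishes by Lemma \ref{lemma-symm}. Standard weighted 1D parabolic estimates at orders $0,1,2$ in $Y$, plus the equation itself for $\partial_t\bar u$, give the claimed regularity. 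Here the compatibility of the initial datum $\partial_Y\bar u_0(0)=0$ is needed; it is implied by the assumptions at the corner.

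I expect the main obstacle to be the justification at $x=0$: showing that the restriction of the solution equals the solution of the trace problem. This requires enough $x$-regularity ($H^2_x$) and the vanishing of $\underline{u^{E,0}}$ at $x=0$ to be exploited rigorously. I would handle it by proving the estimates for smooth approximations and passing to the limit in the trace using $H^{k,2,1}\subset C_x(H^{k,1}_Y)$.
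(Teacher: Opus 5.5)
Your treatment of \eqref{pro4.3-1}--\eqref{pro4.3-3} and \eqref{pro4.3-4} follows essentially the paper's route: weighted $\partial_x^l$ energy estimates, the Hardy bound for $v^{B,2,2}=\int_Y^\infty\partial_x u^{B,1,2}\,dY'$, the transport equation for $\int_0^\infty u^{B,1,2}\,dY$ that reduces to a homogeneous ODE at $x=0$, and the closed one-dimensional trace problem. The genuine gap is in your proof of \eqref{pro4.3-Linfty}.

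The embedding $H^1_xH^1_Y\subset L^\infty$ needs $\partial_Y^2u^{B,1,2}$ and $\partial_x\partial_Y^2u^{B,1,2}$ in $L^\infty(0,T;L^2)$. This is false under the stated hypothesis. With $g=2\alpha\underline{u^{E,0}}-\underline{\partial_y u^{E,0}}$, nothing forces the compatibility $\partial_Y u^{B,1}_0(x,0)=g(0,x)$ between the initial datum and the Neumann datum. In the example of Remark \ref{rmk-1}, $\partial_Y u^{B,1}_0(x,0)=0$, while $g(0,x)$ is in general nonzero. Already for $\partial_t u=\partial_Y^2u$ with $\partial_Y u|_{Y=0}=g\neq0$ constant and $u|_{t=0}=0$, one has $\partial_Y u=g\,\mathrm{erfc}\big(Y/(2\sqrt{t})\big)$ and $\|\partial_Y^2u(t)\|_{L^2_Y}\simeq|g|\,t^{-1/4}$. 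So no energy estimate for $\partial_Y u$ in $H^1_Y$ can close: its boundary terms involve the trace of $\partial_Y^2u$ at $Y=0$, and the quantity you want to bound genuinely blows up as $t\to0$.

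The paper instead uses the maximum principle, which needs no compatibility. Differentiating \eqref{B-1-2} in $Y$, the function $q=\partial_Y u^{B,1,2}$ satisfies
\[
\partial_t q+\underline{u^{E,0}}\partial_x q+Y\underline{\partial_y v^{E,0}}\partial_Y q-\partial_Y^2 q=-\big(\underline{\partial_x u^{E,0}}+\underline{\partial_y v^{E,0}}\big)q=0,\qquad q|_{Y=0}=g.
\]
The zeroth-order term vanishes because $\nabla\cdot\u^{E,0}=0$. No condition is needed at $x=0$ because $\underline{u^{E,0}}(t,0)=0$. Hence $\|q\|_{L^\infty}\le\max\{\|\partial_Y u^{B,1}_0\|_{L^\infty},\|g\|_{L^\infty}\}$. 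The $H^{0,1,2}$ assumption is used only to bound $\partial_Y u^{B,1}_0$ in $L^\infty$ at $t=0$, which is the one place your embedding is legitimate.

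There are two smaller points.

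\textbf{The lift.} Your lift $w=u^{B,1,2}-g\phi$ works only if you group the source as
\[
(\partial_t+\underline{u^{E,0}}\partial_x)g=-2\alpha\,\underline{\partial_x p^{E,0}}.
\]
This uses $\underline{\partial_y u^{E,0}}=-\underline{\omega^{E,0}}$ and $(\partial_t+\underline{u^{E,0}}\partial_x)\underline{\omega^{E,0}}=0$. Taken separately, $\partial_t\underline{\omega^{E,0}}=-\underline{u^{E,0}}\partial_x\underline{\omega^{E,0}}$ is in general only in $H^{3/2}_x$ when $\u^{E,0}\in H^4$, which is not enough at level $l=2$. The paper avoids any time derivative of $g$: it rewrites the boundary term as
\[
-\int\partial_x^l g\,\partial_x^l u^{B,1,2}|_{Y=0}\,dx=\iint\partial_x^l g\,\partial_Y\partial_x^l u^{B,1,2}\,dx\,dY
\]
and absorbs it into the weighted dissipation.

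\textbf{Compatibility for the trace.} You correctly note that \eqref{pro4.3-4} needs $\partial_Y\overline{u^{B,1}_0}(0)=0$. However, this is not implied by the stated hypotheses: neither the weighted Sobolev assumptions nor the conditions of Remark \ref{rmk-1} constrain $\partial_Y u^{B,1}_0(0,0)$. It has to be added as an assumption; the paper's second-order estimate for the trace problem tacitly uses it as well.
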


\begin{proof}
(1) Since $\underline{u^{E,0}} = 0$ on $x=0$, one does not need any boundary condition on the boundary $\{x=0\}$ for solve $u^{B,1,2}$ from the problem \eqref{B-1-2}, so one can construct the solution $u^{B,1,2}$ to \eqref{B-1-2} by restricting to $\Q_{x,Y}$ the solution $\widetilde{u}^{B,1,2}$ to the extended problem defined on $\R^2_+=\{x\in \R, Y>0\}$ (whose well-posedness is classical, see \cite{xin1999zero, iftimie2011viscous} for example), by extending  $\underline{u^{E,0}}, \underline{\dy v^{E,0}}, \partial_x \underline{u^{E,0}}$ to be defined in whole $x\in \R$ in their corresponding Sobolev spaces. 

(2) Now, we  study a priori estimates \eqref{pro4.3-1}-\eqref{pro4.3-4}
for the smooth solution of \eqref{B-1-2}. 
First, since $\underline{\dy u^{E,0}} = \underline{\omega^{E,0}} = 0$ on $x \in (0,\delta)$, one has
$$
\begin{aligned}
    \left\|\left(2\alpha \underline{u^{E,0}} - \underline{\dy u^{E,0}}\right)(t)\right\|_{H^{\frac{5}{2}}_x(\R_+)} \le & 2|\alpha|\|\underline{u^{E,0}}(t)\|_{H^{\frac52}_x(\R_+)}+\|\underline{\dy u^{E,0}}(t)\|_{H^{\frac52}_x(\R_+)}\\
\lesssim & (1+|\alpha|)\left(\|u^{E,0}(t)\|_{H^3(\Q)}+\|u^{E,0}(t)\|_{H^4(\Q\backslash(0,\delta)^2)}\right),
\end{aligned}
$$
and is bounded in $L^\infty(0,T)$.

For $k\ge 1$, multiplying the first equation of \eqref{B-1-2} by $(1+Y^{2k})u^{B,1,2}$, we integrate in $x,Y$ to obtain
    \begin{equation*}
        \begin{aligned}
        & \quad \frac{\mathrm{d}}{\mathrm{d}t} \|u^{B,1,2}(t)\|^2_{k,0,0} + 2\|u^{B,1,2}(t)\|^2_{k,0,1} \\
        = & \iint_{\Q} \left(-(1+Y^{2k}) \partial_x\underline{u^{E,0}} + (1+(2k+1)Y^{2k}) \underline{\partial_y v^{E,0}}\right) |u^{B,1,2}|^2 \,\mathrm{d} x \mathrm{d} Y \\ 
        & - 2 \iint_{\Q} 2k Y^{2k-1} u^{B,1,2} \partial_Y u^{B,1,2}\,\mathrm{d} x \mathrm{d} Y \\
        & - 2 \int_{x>0} \left( 2\alpha \underline{u^{E,0}} - \underline{\partial_y u^{E,0}}\right) {u^{B,1,2}}_{|Y=0} \,\mathrm{d} x \\
        = & \iint_{\Q} \left(-(1+Y^{2k}) \partial_x\underline{u^{E,0}} + (1+(2k+1)Y^{2k}) \underline{\partial_y v^{E,0}}\right) |u^{B,1,2}|^2 \,\mathrm{d} x \mathrm{d} Y \\ 
        & + 2k(2k-1) \|u^{B,1,2}(t)\|^2_{k-1,0,0} + 2\iint_{\Q} \left( 2\alpha \underline{u^{E,0}} - \underline{\partial_y u^{E,0}}\right) \partial_Y u^{B,1,2} \,\mathrm{d} x \mathrm{d} Y\\
        \le & 2(2 k^2+1)\left(1+ \|\underline{\partial_y v^{E,0}}(t)\|_{L^\infty(\R_+)}\right)\|u^{B,1,2}(t)\|^2_{k,0,0} \\
        & + \|u^{B,1,2}(t)\|^2_{k,0,1} + 
        \|(1+Y^{2k})^{-\frac{1}{2}}\|_{L^2(\R_+)}
        \left\|\left( 2\alpha \underline{u^{E,0}} - \underline{\partial_y u^{E,0}}\right) (t)\right\|^2_{L^2(\R_+)}.
        \end{aligned}
    \end{equation*}
    Thus, by Gronwall's inequality, there exists a constant $C_k>0$, such that for any $t\in (0, T)$,
    \begin{equation}\label{4.31}
        \| u^{B,1,2}(t) \|^2_{k,0,0} + \int_0^t \|u^{B,1,2}(\tau)\|^2_{k,0,1} \,\mathrm{d} \tau \le C_k.
    \end{equation}

    To have the the estimate \eqref{pro4.3-1} for the derivatives in $x$, by
    applying $\partial_x^l$ on the first equation of \eqref{B-1-2} and multiplying it by $(1+Y^{2k}) \partial_x^l u$ for $l= 1,2$, we have
    \begin{equation}\label{est-P-1}
        \begin{aligned}
        &\frac{\mathrm{d}}{\mathrm{d}t} \| \partial_x^l u^{B,1,2}(t)\|^2_{k,0,0} + 2 \|\partial_x^l u^{B,1,2}(t)\|^2_{k,0,1} \\
        \le & \iint_{\Q} \left(-(1+Y^{2k}) \partial_x\underline{u^{E,0}} + (1+(2k+1)Y^{2k}) \underline{\partial_y v^{E,0}}\right) |\partial_x^l u^{B,1,2}(t)|^2 \,\mathrm{d} x \mathrm{d} Y \\ 
        & - 2 \iint_{\Q} (1+Y^{2k}) \left([\partial_x^l, \underline{u^{E,0}} \partial_x + Y \underline{\partial_y v^{E,0}}\partial_Y] \right) u^{B,1,2} \cdot \partial_x^l u^{B,1,2}\,\mathrm{d} x \mathrm{d} Y \\
        & - 2 \iint_{\Q} (1+Y^{2k}) [\partial_x^l, \partial_x \underline{u^{E,0}}] u^{B,1,2} \cdot \partial_x^l u^{B,1,2}\,\mathrm{d} x \mathrm{d} Y \\
        & - 2 \iint_{\Q} 2k Y^{2k-1} \cdot \partial_x^l u^{B,1,2}\cdot \partial_Y \partial_x^l u^{B,1,2} \, \mathrm{d}x \mathrm{d}Y \\
        & - 2 \int_{x>0} \partial_x^l \left(2\alpha \underline{u^{E,0}} - \underline{\dy u^{E,0}}\right) \cdot \left(\partial_x^l u^{B,1,2}\right)_{|Y=0} \,\mathrm{d} x \\
        =: & I_1 + \cdots + I_5,
    \end{aligned}
    \end{equation}
    which shall be estimated one by one. First, one has
    \begin{equation}\label{est-P-2}
        |I_1| \lesssim 2(k+1) \left(1+ \| v^{E,0}(t)\|_{W^{2,\infty}(\Q)}\right)\|u^{B,1,2}(t)\|^2_{k,l,0}.
    \end{equation}
    Using $(1+Y^{2k})Y \lesssim 1+ Y^{2k+2} \le (1+Y^{2k+4})^{\frac{1}{2}} (1+Y^{2k})^{\frac{1}{2}}$, one has
    \begin{equation}\label{est-P-3}
        \begin{aligned}
            |I_2 + I_3| \lesssim & \sum_{1\le j\le l} \left|\iint_{\Q} (1+Y^{2k}) \left(\partial_x^j \underline{u^{E,0}} \partial_x^{l+1-j} + Y \partial_x^j\underline{\partial_y v^{E,0}} \partial_x^{l-j}\partial_Y\right) u^{B,1,2} \cdot \partial_x^l u^{B,1,2} \,\mathrm{d}x \mathrm{d} Y\right|\\
            & + \sum_{1\le j\le l} \left|\iint_{\Q} (1+Y^{2k}) \partial_x^{j+1} \underline{u^{E,0}} \partial_x^{l-j} u^{B,1,2} \cdot \partial_x^l u^{B,1,2} \,\mathrm{d}x \mathrm{d} Y\right|\\
             \lesssim & \|\underline{u^{E,0}}(t)\|_{W^{l+1,\infty}(\R_+)} \|u^{B,1,2}(t)\|^2_{k,l,0}\\
            & + \|\underline{\partial_y v^{E,0}}(t)\|_{W^{l,\infty}(\R_+)} \|u^{B,1,2}(t)\|_{k+2, l-1,1} \|u^{B,1,2}(t)\|_{k,l,0}.
        \end{aligned}
    \end{equation}
and
    \begin{equation}\label{est-P-4}
        \begin{aligned}
            |I_4+I_5| = & 2k(2k-1) \|\partial_x^l u^{B,1,2}(t)\|^2_{k-1,0,0} + 2\iint_{\Q} \partial_x^l g \partial_Y\partial_x^l {u}^{B,1,2} \,\mathrm{d} x \mathrm{d}Y\\
            \lesssim & \|u^{B,1,2}(t)\|^2_{k-1,l,0} + \| u^{B,1,2}(t)\|_{k,l,1} 
            \|g(t)\|_{H^l(\R_+)},
        \end{aligned}
    \end{equation}
    with 
    $g=2\alpha \underline{u^{E,0}} - \underline{\dy u^{E,0}}$
    ,
    and we have used the fact that $\|(1+Y^{2k})^{-1/2}\|_{L^2(\R_+)} < +\infty$.
    Therefore, combining \eqref{est-P-1}-\eqref{est-P-4}, there exists a constant $C_k>0$ that depends only on $k$, $|\alpha|$, and the indicated norms of $\u^{E,0}$, such that
    \begin{equation*}
        \begin{aligned}
        \frac{\mathrm{d}}{\mathrm{d}t} \| u^{B,1,2}(t)\|^2_{k,l,0} + \| u^{B,1,2}(t)\|^2_{k,l,1} \le & C_k\Big(1+\|u^{B,1,2}(t)\|^2_{k,l,0} + \|u^{B,1,2}(t)\|^2_{k+2,l-1,1} \Big),
        \end{aligned}
    \end{equation*}
    which implies, by applying Gronwall's lemma, that
    \begin{equation*}
        \|u^{B,1,2}(t)\|_{k,l,0}^2 + \int_0^t \|u^{B,1,2}(\tau)\|^2_{k,l,1} \mathrm{d} \tau \le C_ke^{C_k t} \left(t+\int_0^t \|u^{B,1,2}(\tau)\|^2_{k+2,l-1,1} \mathrm{d} \tau\right).
    \end{equation*}
  Taking induction on $l = 1,2$, and using \eqref{4.31}, it concludes \eqref{pro4.3-1} for all $k\in \mathbb{N}$.

(3)
    We adapt the method of barrier functions for the uniform estimates of $$\|\partial_Y u^{B,1,2}\|_{L^\infty((0, T)\times \Q)}.$$ We set $\gamma >0$ a fixed constant, and
    $w(t,x,Y) = \partial_Y u^{B,1,2}(t,x,Y) e^{-\gamma t}$.
    Then we deduce that $w$ satisfies the equation
    \begin{equation}\label{barrier-eq}
        \dt |w|^2 + \left(\underline{u^{E,0}} \partial_x + Y \underline{\partial_y v^{E,0}} \partial_Y \right) |w|^2 - \partial_Y^2 |w|^2 + 2\gamma |w|^2 + 2|\partial_Y w|^2 = 0.
    \end{equation}
    Suppose that $|w|^2$ attains its maximum on $[0,T]\times \overline{\Q}$ in a point $(t_0,x_0,Y_0)$ that satisfies $t_0>0, Y_0>0$, then at this point one has that
    \begin{equation*}
        \partial_t |w|^2 \ge 0, \quad \partial_Y |w|^2 = 0, \quad \underline{u^{E,0}} \partial_x |w|^2 = 0, \quad \partial_Y^2 |w|^2 \le 0.
    \end{equation*}
    This reduces to
    \begin{equation*}
        2\gamma |w|^2 + 2 |\partial_Y w|^2 \le 0,
    \end{equation*}
    which is impossible unless $w \equiv 0$. The non-trivial boundary condition given in \eqref{B-1-2} shows that
    \begin{equation*}
        \|e^{-\gamma t} \partial_Y u^{B,1,2}\|_{L^\infty((0,T)\times \Q)} \le  \max\left\{
        \|\dY u^{B,1}_0\|_{L^\infty(\Q)}, \|e^{-\gamma t} \left(2\alpha\underline{u^{E,0}} - \underline{\dy u^{E,0}}\right)\|_{L^\infty((0,T)\times \R_+)} \right\},
    \end{equation*}
    and further
    \begin{equation*}
        \begin{aligned}
\|\partial_Y u^{B,1,2}\|_{L^\infty((0,T)\times \Q)} \lesssim \|u^{B,1}_0\|_{H^1_x H^2_Y(\Q)}+ \|\u^{E,0}\|_{L^\infty(0,T; W^{1,\infty}(\Q))} < + \infty.
\end{aligned}
    \end{equation*}

(4)    On the other hand, from \eqref{B-1-2}, one immediately has     \begin{equation*}
        v^{B,2,2}(t,x,Y) = \int_Y^{\infty} \partial_x u^{B,1,2}(t,x,Y') \, \mathrm{d} Y',
    \end{equation*}
    which implies
    \begin{equation*}
        \|v^{B,2,2}(t)\|_{k,1,1} = \|\dx u^{B,1,2}(t)\|_{k,1,0} = \|u^{B,1,2}(t)\|_{k,2,0}\in L^\infty(0,T),
    \end{equation*}
and
        \begin{equation*}
          \begin{aligned}
            \|v^{B,2,2}(t)\|_{k,1,0} = & \sum_{0\le j\le 1}\left(\iint_{x,Y>0} (1+Y^{2k}) \left|\int_Y^\infty \partial_x^{j+1} u^{B,1,2}(t,x,Y') \mathrm{d} Y'\right|^2 \, \mathrm{d} x\mathrm{d} Y\right)^{\frac{1}{2}} \\
            \le & 2 \sum_{0\le j\le 1} \left(\int_{x>0} \int_{Y>0} \left(Y(1+Y^{2k})^{\frac{1}{2}} \partial_x^{j+1} u^{B,1,2} (t,x,Y)\right)^2 \, \mathrm{d} Y \mathrm{d} x\right)^{\frac{1}{2}} \\
            \le & 2\|\partial_xu^{B,1,2}\|_{k+1, 1, 0}
=2\|u^{B,1,2}\|_{k+1, 2, 0}\in L^\infty (0,T)
        \end{aligned}
        \end{equation*}
by using \eqref{pro4.3-1}, thus we have the conclusion \eqref{pro4.3-2}.


(5)   To study \eqref{pro4.3-3}, by integrating \eqref{B-1-2} over $Y \in \R_+$, we have 
    \begin{equation*}
        \begin{aligned}
            \dt \left(\int_0^\infty u^{B,1,2} \, \d Y\right) & = \left(\underline{\dy u^{E,0}} - 2\alpha \underline{u^{E,0}}\right)
             + \left( 2 \underline{\dy v^{E,0}} -  \underline{u^{E,0}} \dx \right)\left(\int_0^\infty u^{B,1,2} \, \d Y\right),
        \end{aligned}
    \end{equation*}
   which implies that
    \begin{equation*}
        \begin{aligned}
            \left\|\dt \left(\int_0^\infty u^{B,1,2} \, \d Y\right)(t)\right\|_{H^1_{x}(\R_+)} \|\u^{E,0}(t)\|_{H^3_{x,y}(\Q)} \left(1+ \|u^{B,1,2}(t)\|_{H^{1,2,0}_{x,Y}(\Q_{x,Y})}\right)
        \end{aligned}
    \end{equation*}
    being bounded in $L^\infty(0,T)$.

    Taking the restriction of \eqref{B-1-2} on $\{x=0\}$, one gets
    \begin{equation}\label{u-b,1,2}
        \begin{cases}
            \dt \overline{u^{B,1,2}} + Y \widehat{\dy v^{E,0}} \dY \overline{u^{B,1,2}} + \widehat{\dx u^{E,0}} \overline{u^{B,1,2}} - \dY^2 \overline{u^{B,1,2}} = 0,\quad Y \in \R_+,\\
            \dY \overline{u^{B,1,2}}_{|Y = 0} = 0, \\
            \lim_{Y \to \infty} \overline{u^{B,1,2}} = 0, \\
            {\overline{u^{B,1,2}}}_{|t=0} = \overline{u^{B,1}_0}.
        \end{cases}
    \end{equation}
    Applying $\dY$ and $\dY^2$ on \eqref{u-b,1,2}, we obtain
    \begin{equation}\label{u-b,1,2-1}
        \dt (\dY \overline{u^{B,1,2}}) + Y \widehat{\dy v^{E,0}} \dY (\dY \overline{u^{B,1,2}}) - \dY^2 (\dY \overline{u^{B,1,2}}) = 0,
    \end{equation}
    and
    \begin{equation}\label{u-b,1,2-2}
        \dt (\dY^2 \overline{u^{B,1,2}}) + Y \widehat{\dy v^{E,0}} \dY (\dY^2 \overline{u^{B,1,2}}) + \widehat{\dy v^{E,0}} (\dY^2 \overline{u^{B,1,2}}) - \dY^2 (\dY^2 \overline{u^{B,1,2}}) = 0.
    \end{equation}
    Moreover, the restriction of \eqref{u-b,1,2-1} on $Y = 0$ implies that
    $\dY^3 \overline{u^{B,1,2}}_{|Y = 0} = 0$.

    Then, if for all $k \in \mathbb{N}$, $\langle Y \rangle^k \overline{u^{B,1}_0} \in H^2(\R_+)$, it is easy by similar energy estimates as above to obtain that for all $k\in \mathbb{N}$ and $l= 0,1,2$
    \begin{equation*}
        \|\langle Y\rangle^k \dY^l\overline{u^{B,1,2}}(t)\|^2 + \int_0^t \|\langle Y\rangle^k \dY^{l+1} \overline{u^{B,1,2}}(\tau)\|^2\le \|\langle Y\rangle^k \dY^l\overline{u^{B,1}_0}\|^2 + C_k\int_0^t \|\langle Y\rangle^k \dY^l\overline{u^{B,1,2}}(\tau)\|^2,
    \end{equation*}
    where $C_k$ is a constant depending only on $\|\widehat{\dy v^{E,0}}\|_{L^\infty(0,T)}$ and $k$. 
    Thus, it concludes
    $$
    \langle Y \rangle^k \overline{u^{B,1,2}} \in L^\infty(0,T;H^2_Y(\R_+))\cap L^2(0,T; H^3_Y(\R_+)), \quad \forall k \in \mathbb{N}.
    $$
   From the equation \eqref{u-b,1,2}, we see that
    \begin{equation*}
        \langle Y \rangle^k \dt \overline{u^{B,1,2}} \in L^\infty(0,T;L^2_Y(\R_+))\cap L^2(0,T; H^1_Y(\R_+)), \quad \forall k \in \mathbb{N}.
    \end{equation*}

    On the other hand, recall that the compatibility conditions from Remark \ref{rmk-1} of the initial data imply that
    \begin{equation*}
        \int_0^\infty  u^{B,1}_0(x, Y) \,\d Y \equiv C,
    \end{equation*}
    for some constant $C$. Under the assumption $u^{B,1}_0 \in \cap_{k \in \mathbb{N}} H^{k,2,0}_{x,Y}(\Q_{x,Y})$, we know that $$ \lim_{x\to \infty} \|u^{B,1}_0(x,\cdot)\|_{L^1_Y(\R_+)} = 0, $$ which implies that $C = 0$.
    Now we take the restriction of \eqref{B-1-2} on $x=0$ and integrate over $Y \in \R_+$ to get
    \begin{equation*}
        \begin{cases}
                \partial_t \left(\int_0^\infty\overline{u^{B,1,2}} \,\d Y\right) + 2\widehat{\partial_x u^{E,0}} \left(\int_0^\infty\overline{u^{B,1,2}} \,\d Y\right) = 0, \quad Y >0,\\
                \left(\int_0^\infty\overline{u^{B,1,2}} \,\d Y\right)_{|t=0} = \int_0^\infty u^{B,1}_0 (0,Y) \, \d Y = 0.
            \end{cases}
    \end{equation*}
    This implies $\int_0^\infty\overline{u^{B,1,2}} \,\d Y = 0$.

\end{proof}

\subsection{The outer flow profile $\u^{E,2}$}

From Conclusion \ref{concl} in \S2, we know that the outer flow profile $\u^{E,2}(t,x,y)$ satisfies
\begin{equation}\label{E,2}
        \begin{cases}
        \partial_t \u^{E,2} + (\u^{E,0} \cdot \nabla) \u^{E,2} + (\u^{E,2}\cdot \nabla) \u^{E,0} + \nabla p^{E,2} = \Delta \u^{E,0},\quad x,y>0\\
        \nabla \cdot \u^{E,2} = 0,\\
        {u^{E,2}}_{|x=0} = - {u^{B,2,1}}(t,0,y), \quad {v^{E,2}}_{|y=0} = - {v^{B,2,2}}(t,x,0),\\
        {\u^{E,2}}_{|t=0} = \u^{E,2}_0(x,y),
        \end{cases}
\end{equation}
where $v^{B,2,2}(t,x,Y)$ is the boundary layer profile given in Proposition \ref{prop-Prandtl}, and $u^{B,2,1}(t,X,y)$ is the boundary layer profile near $\{x=0\}$. For this problem, one has:

\begin{proposition}\label{prop-E,2}
    Under the assumptions given in Theorem \ref{thm-Euler} and $\u^{E,2}_0\in H^{1}(\Q)$,
    there exists a unique solution $(\u^{E,2},p^{E,2})$ to the problem \eqref{E,2}, satisfying
    \begin{equation*}
        (\u^{E,2}, \nabla p^{E,2}) \in L^\infty(0,T;H^{1}(\Q)) \times L^\infty(0,T;L^2(\Q)).
    \end{equation*}
\end{proposition}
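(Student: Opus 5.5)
We reduce \eqref{E,2} to a problem with homogeneous boundary data and then use an energy method at the $H^1$ level, working with the velocity and the vorticity separately. Throughout, we use that $\u^{E,0}\in L^\infty(0,T;H^4(\Q))\subset L^\infty(0,T;W^{2,\infty}(\Q))$ by Theorem \ref{thm-Euler}, so the source term satisfies $\Delta\u^{E,0}\in L^\infty(0,T;H^2(\Q))$.

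\textbf{Step 1: lift the boundary data.} The boundary traces are
\begin{equation*}
g_1(t,y)=-u^{B,2,1}(t,0,y),\qquad g_2(t,x)=-v^{B,2,2}(t,x,0).
\end{equation*}
From Proposition \ref{prop-Prandtl} we have $v^{B,2,2}(t,x,0)=\int_0^\infty\partial_x u^{B,1,2}\,\d Y$, so $g_2\in L^\infty(0,T;H^1_x(\R_+))$ and $\partial_t g_2\in L^\infty(0,T;L^2)$. By \eqref{pro4.3-3}, $g_2(t,0)=-\partial_x\int_0^\infty u^{B,1,2}\,\d Y$ evaluated at $x=0$. The analogous statements hold for $g_1$, using the symmetric version of Proposition \ref{prop-Prandtl}.

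We would like a divergence-free lift $\u^b$ with the prescribed normal traces. Such a lift requires $\int_0^\infty g_1\,\d y+\int_0^\infty g_2\,\d x=0$, which holds provided $\int\!\!\int u^{B,1,2}\,\d Y\,\d x$ is controlled; actually flux decaying at infinity suffices on the unbounded domain. We take $\u^b=\nabla^\perp\psi$, where $\psi$ solves a Dirichlet problem on $\Q$ with $\psi|_{y=0}=\int_0^x g_2$ and $\psi|_{x=0}=-\int_0^y g_1$, using the Laplace estimates in the quarter plane from the Appendix. Alternatively, we can avoid requiring divergence-freeness and instead solve for the pressure correction.

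We need $\u^b\in L^\infty(0,T;H^2)$ and $\partial_t\u^b\in L^\infty(0,T;L^2)$. This requires the data regularity $g_i\in H^{3/2}$ together with corner compatibility, which we obtain by using the $H^{k,2,0}$ bounds in Proposition \ref{prop-Prandtl}.

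\textbf{Step 2: linear energy estimate.} Set $\mathbf w=\u^{E,2}-\u^b$. Then $\mathbf w$ satisfies a linearized Euler system with $\mathbf w\cdot\vec n=0$ on $\partial\Q$ and forcing
\begin{equation*}
\mathbf F=\Delta\u^{E,0}-\partial_t\u^b-(\u^{E,0}\cdot\nabla)\u^b-(\u^b\cdot\nabla)\u^{E,0}\in L^\infty(0,T;H^1).
\end{equation*}
The $L^2$ estimate is immediate: the transport term vanishes because $\u^{E,0}\cdot\vec n=0$, the pressure term is orthogonal to $\mathbf w$, and the remaining term is bounded by $\|\nabla\u^{E,0}\|_{L^\infty}\|\mathbf w\|^2$. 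Gronwall's inequality then gives $\mathbf w\in L^\infty(0,T;L^2)$.

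\textbf{Step 3: vorticity estimate.} Let $\zeta=\nabla\times\mathbf w$. It satisfies
\begin{equation*}
\partial_t\zeta+\u^{E,0}\cdot\nabla\zeta+\nabla\times\big((\mathbf w\cdot\nabla)\u^{E,0}\big)-(\text{terms from }\u^{E,0}\cdot\nabla\mathbf w\text{ commutator})=\nabla\times\mathbf F.
\end{equation*}
In 2D this reads $\partial_t\zeta+\u^{E,0}\cdot\nabla\zeta+\mathbf w\cdot\nabla\omega^{E,0}=\nabla\times\mathbf F$. This is a pure transport equation that needs no boundary condition, since the transporting field is tangent to the boundary. The $L^2$ estimate gives $\zeta\in L^\infty(0,T;L^2)$, using $\nabla\omega^{E,0}\in L^\infty$. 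We then apply the Hodge estimate \eqref{Hodge}, which is valid since $\mathbf w$ is divergence-free and tangent, to get $\mathbf w\in L^\infty(0,T;H^1)$.

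The pressure is recovered from the Neumann problem
\begin{equation*}
-\Delta p^{E,2}=\nabla\cdot\big(\text{terms}-\Delta\u^{E,0}\big)
\end{equation*}
with data from the normal component, exactly as in \eqref{est-E,0-p}. This gives $\nabla p^{E,2}\in L^\infty(0,T;L^2)$. Existence follows by a Galerkin or smoothing approximation, and uniqueness follows from the linear $L^2$ estimate.

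\textbf{Main obstacle.} The main difficulty is Step 1, namely constructing the lift with $H^2$ regularity near the corner. The two traces must be compatible at $\mathbf 0$. Concretely, $g_1(t,0)$ and $g_2(t,0)$ are related through $\partial_x\int u^{B,1,2}\,\d Y$ and $\partial_y\int v^{B,1,1}\,\d X$ at the corner. We would first verify, from the $x$-integrated trace equation in Proposition \ref{prop-Prandtl}(5), that these quantities match the divergence constraint at the corner, and then invoke the quarter-plane Laplace estimates from the Appendix.
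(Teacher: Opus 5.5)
Your Steps 2 and 3 follow the paper's skeleton: lift the normal traces, do an $L^2$ energy estimate for the homogenized linearized Euler system, run a transport estimate for the vorticity, and finish with the Hodge estimate \eqref{Hodge}. The argument does not close, however, because Step 1 is left open and aims at a target that is both unavailable and unnecessary.

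You require $\mathbf F\in L^\infty(0,T;H^1)$ so that $\nabla\times\mathbf F\in L^2$. But $\nabla\times\mathbf F$ contains $\nabla\times\partial_t\u^b$, which needs $\partial_t\u^b\in L^\infty(0,T;H^1)$. You only claim $\partial_t\u^b\in L^\infty(0,T;L^2)$, and the bounds you cite give no more:
\begin{itemize}
\item Proposition \ref{prop-Prandtl} gives $u^{B,1,2}\in L^\infty(0,T;H^{k,2,0}_{x,Y})$, hence $g_2\in L^\infty(0,T;H^1_x)$ (as you note yourself), not $H^{3/2}_x$.
\item \eqref{pro4.3-3} gives $\partial_t g_2\in L^\infty(0,T;L^2)$, not $H^{1/2}$.
\end{itemize}
So the $H^2$ lift is not supplied by the estimates you invoke. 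The corner compatibility you name as the main obstacle is also only announced, never checked.

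The missing idea is to choose the lift curl-free as well as divergence-free, as the paper does. Take $\mathbf z^{E,2}=\nabla\phi$ with $\phi$ harmonic and Neumann data $-\overline{u^{B,2,1}}$, $-\underline{v^{B,2,2}}$; equivalently, your $\nabla^\perp\psi$ with $\Delta\psi=0$. Then $\nabla\times\partial_t\mathbf z^{E,2}=0$. For two-dimensional divergence-free fields one also has
$\nabla\times\big((\u^{E,0}\cdot\nabla)\mathbf z+(\mathbf z\cdot\nabla)\u^{E,0}\big)=\u^{E,0}\cdot\nabla(\nabla\times\mathbf z)+\mathbf z\cdot\nabla\omega^{E,0}$.
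Hence $\nabla\times\u^{E,2}$ satisfies
$\partial_t\omega+\u^{E,0}\cdot\nabla\omega=\Delta\omega^{E,0}-\u^{E,2}\cdot\nabla\omega^{E,0}$,
whose right-hand side lies in $L^\infty(0,T;L^2)$ as soon as $\u^{E,2}\in L^\infty(0,T;L^2)$.

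Consequently you only need $\mathbf z^{E,2}\in L^\infty(0,T;H^1)$ and $\partial_t\mathbf z^{E,2}\in L^\infty(0,T;L^2)$. Both follow from Lemma \ref{lemma-LN-refl} with $l=1$ and $l=0$, using exactly the bounds of Proposition \ref{prop-Prandtl}. The lemma also needs the separate zero-mean identities $\int_0^\infty\underline{v^{B,2,2}}\,\d x=\int_0^\infty\overline{u^{B,1,2}}\,\d Y=0$, and the analogue for $\overline{u^{B,2,1}}$, which come from \eqref{pro4.3-3}. This is the precise justification your total-flux discussion was missing. With this lift, neither $H^2$ regularity nor corner compatibility enters the proof.

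The same issue affects your pressure step: the Neumann data for $p^{E,2}$ is not homogeneous as in \eqref{est-E,0-p}, because it contains $\partial_t$ of the traces. That is again handled by Lemma \ref{lemma-LN-refl} with $l=0$.
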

\begin{proof}
    We first homogenize the boundary data given in \eqref{E,2} by constructing an auxiliary function $\mathbf{z}^{E,2} = (z^{E,2}_1, z^{E,2}_2)$ such that
    \begin{equation*}
        \nabla\cdot \mathbf{z}^{E,2} = \nabla \times \mathbf{z}^{E,2} = 0, \quad ({z^{E,2}_1})_{|x=0} = - \overline{u^{B,2,1}}, \, ({z^{E,2}_2})_{|y=0} = - \underline{v^{B,2,2}}.
    \end{equation*}
    Indeed, $\mathbf{z}^{E,2}$ equals $(I-\mathbb{P}) \u^{E,2}$ with $\mathbb{P}$ being the Leray projection.
    It suffices to take $\mathbf{z}^{E,2} = \nabla \phi$, where $\phi$ solves
    \begin{equation}\label{LN-E,2}
        \Delta \phi = 0, \quad \text{in }\Q,\quad (\dx \phi)_{|x=0} = - \overline{u^{B,2,1}}, \quad (\dy \phi)_{|y=0} = - \underline{v^{B,2,2}}.
    \end{equation}
    Note that $\int_{\R_+} \overline{u^{B,2,1}} \d y = \int_{\R_+} \underline{v^{B,2,2}} \d x = 0$, since, for example,
    \begin{equation*}
        \int_{\R_+} \underline{v^{B,2,2}} \d x = \int_{\R_+} \int_{\R_+} \partial_x u^{B,1,2} \d Y \d x = - \int_{\R_+} \overline{u^{B,1,2}} \d Y = 0,
    \end{equation*}
    by using \eqref{pro4.3-3}.
    Hence, by Lemma \ref{lemma-LN-refl} with $l=1$, there exists $\phi\in L^\infty(0,T;H^2(\Q)/\R)$ such that
    \begin{equation*}
        \begin{aligned}
            \|\mathbf{z}^{E,2}(t)\|_{H^1(\Q)}  = & \|\nabla \phi(t)\|_{H^1(\Q)} \lesssim \|\overline{u^{B,2,1}}(t)\|_{H^{\frac{1}{2}}(\R_+)} + \|\underline{v^{B,2,2}}(t)\|_{H^{\frac{1}{2}}(\R_+)} \\
           & + \left\|\int_0^y \overline{u^{B,2,1}}(t,s) \d s\right\|_{L^2(\R_+)} + \left\|\int_0^x \underline{v^{B,2,2}}(t,s) \d s\right\|_{L^2(\R_+)}.
        \end{aligned}
    \end{equation*}
    Since
    \begin{equation*}
        \begin{aligned}
            \int_0^x \underline{v^{B,2,2}}(t,s) \d s = & \int_0^x \int_0^\infty \partial_s u^{B,1,2}(t,s,Y) \d Y \d s\\ = & \int_0^\infty \left(u^{B,1,2}(t,x,Y)-\overline{u^{B,1,2}}(t,Y)\right) \d Y\\ = & \left(\int_Y^\infty u^{B,1,2}(t,x,Y') \d Y'\right)_{|Y=0},
        \end{aligned}
    \end{equation*}
    where we used \eqref{pro4.3-3} in the third equality, and by a trace theorem, one has
    \begin{equation*}
        \begin{aligned}
            \left\|\int_0^x \underline{v^{B,2,2}}(t,s) \d s\right\|_{L^2_x(\R_+)} \lesssim & \left\|\int_Y^\infty u^{B,1,2}(t,x,Y') \d Y'\right\|_{H^{1}_Y L^2_{x}(\Q_{x,Y})}
            \\
            \lesssim & \|u^{B,1,2}(t)\|_{H^{1,1,0}_{x,Y}(\Q_{x,Y})},
        \end{aligned}
    \end{equation*}
    where we have used the fact that $\dY \left(\int_Y^\infty u^{B,1,2} \d Y'\right) = u^{B,1,2}$. Therefore, by a similar analysis on $\int_0^y \overline{u^{B,2,1}}(t,s) \d s$, we obtain $\mathbf{z}^{E,2} \in L^\infty(0,T; H^1(\Q))$.

    Moreover, from Lemma \ref{lemma-LN-refl} with $l=0$, we see that
    \begin{equation*}
        \begin{aligned}
            \|\dt \mathbf{z}^{E,2}(t)\|_{L^2(\Q)} \lesssim & \left\|\int_0^y \dt\overline{u^{B,2,1}}(t,s) \d s\right\|_{H^1(\R_+)} + \left\|\int_0^x \dt\underline{v^{B,2,2}}(t,s) \d s\right\|_{H^1(\R_+)}\\
            \lesssim & \left\|\int_0^\infty \dt v^{B,1,1}(t,X,y) \d X\right\|_{H^1(\R_+)} + \left\|\int_0^\infty \dt u^{B,1,2}(t,x,Y) \d Y\right\|_{H^1(\R_+)},
        \end{aligned}     
    \end{equation*}
    which implies that $\dt \mathbf{z}^{E,2} \in L^\infty(0, T; L^2(\Q))$ by using Proposition \ref{prop-Prandtl}. 
    
    On the other hand, since
    \begin{equation*}
        \overline{u^{B,2,1}}_{|t=0} = \overline{u^{B,2}_0} = 0,\quad   \underline{v^{B,2,2}}_{|t=0} = \underline{v^{B,2}_0} = 0,
    \end{equation*}
    we find by the uniqueness (modulo constants) of the solution to \eqref{LN-E,2} that ${\mathbf{z}^{E,2}}_{|t=0} = 0$.

    Let $\u^{E,2} = \mathbf{z}^{E,2} + \mathbf{v}^{E,2}$, then from \eqref{E,2} we know that ${\mathbf{v}}^{E,2} = (v^{E,2}_1,v^{E,2}_2)$ satisfies the following problem:
    \begin{equation}\label{li-Euler'}
\begin{cases}
\begin{aligned}
    \partial_t \mathbf{v}^{E,2}& + (\u^{E,0} \cdot \nabla) \mathbf{v}^{E,2}+ (\mathbf{v}^{E,2}\cdot \nabla) \u^{E,0} + \nabla p^{E,2} \\
    & = \Delta \u^{E,0} - \left(\dt \mathbf{z}^{E,2} + (\u^{E,0} \cdot \nabla) \mathbf{z}^{E,2} + (\mathbf{z}^{E,2} \cdot \nabla) \u^{E,0}\right),\quad \text{in }\Q,
\end{aligned}
\\
\nabla \cdot \mathbf{v}^{E,2}= 0, \quad \text{in }\Q,\\
{\left(\mathbf{v}^{E,2}\cdot \vec{n}\right)}_{|\partial\Q\setminus\{(0,0)\}} = 0 \\
{\mathbf{v}^{E,2}}_{|t=0} = \u^{E,2}_0(x,y),
\end{cases}    
\end{equation}
By the regularity of $\u^{E,0}$ and $\mathbf{z}^{E,2}$, the right-hand side of the equation is bounded in $L^\infty(0,T; L^2(\Q))$.
It is easy to see that \eqref{li-Euler'} admits a weak solution $\mathbf{v}^{E,2} \in L^\infty(0,T; L^2(\Q))$ via an even-odd extension argument.
Moreover, $\omega_{v}^{E,2} := \nabla \times \mathbf{v}^{E,2}$ satisfies
\begin{equation}\label{vor-E,2}
    \dt \omega_v^{E,2} + (\u^{E,0} \cdot \nabla ) \omega_v^{E,2} = \Delta \omega^{E,0} - (\mathbf{u}^{E,2} \cdot \nabla) \omega^{E,0}.
\end{equation}
Obviously,
\begin{equation*}
    \begin{aligned}
        \|(\mathbf{u}^{E,2} \cdot \nabla) \omega^{E,0} (t)\|_{L^2(\Q)} \lesssim  \|\u^{E,2}(t)\|_{L^2(\Q)} \|\u^{E,0}(t)\|_{H^4(\Q)},
    \end{aligned}
\end{equation*}
so the right-hand side of \eqref{vor-E,2} belongs to $L^\infty(0,T; L^2(\Q))$. 
By standard energy argument for the transport equation \eqref{vor-E,2}, one has $\omega^{E,2}_v \in L^\infty(0,T; L^2(\Q))$ providing that $\u^{E,2}_0\in H^{1}(\Q)$.
Therefore, by the Hodge estimate \eqref{Hodge}, we conclude that $\u^{E,2} = \mathbf{v}^{E,2} + \mathbf{z}^{E,2} \in L^\infty(0,T; H^1(\Q))$. The uniqueness of the solution $\u^{E,2}$ can be easily obtained by a classical argument for the problem \eqref{E,2}.

Applying $\nabla \cdot$ to both sides of \eqref{E,2}, one has
\begin{equation*}
    \begin{cases}
        \Delta p^{E,2} = \nabla \cdot F,\quad \text{in }\Q,\\
       {\partial_{\vec{n}} p^{E,2}}_{|\partial\Q} = F \cdot \vec{n} 
       + \left(\dt \overline{u^{B,2,1}}, \dt \underline{v^{B,2,2}} \right)\cdot \vec{n}
       ,
    \end{cases}
\end{equation*}
where
\begin{equation*}
    F = \Delta\u^{E,0} -(\u^{E,2}\cdot \nabla \u^{E,0} + \u^{E,0}\cdot \nabla \u^{E,2}).
\end{equation*}
By standard energy estimates and Lemma \ref{lemma-LN-refl}, one has
\begin{equation}\label{p-E,2}
    \begin{aligned}
        \|\nabla p^{E,2}(t)\|_{L^2} \lesssim & \|F(t)\|_{L^2} + \|\int_0^y\dt \overline{u^{B,2,1}}(t,s) \d s\|_{H^1(\R_+)} + \|\int_0^x \dt \underline{v^{B,2,2}}(t,s) \d s\|_{H^1(\R_+)}\\
        \lesssim & \|\Delta\u^{E,0}  -(\u^{E,2}\cdot \nabla \u^{E,0} + \u^{E,0}\cdot \nabla \u^{E,2} )(t)\|_{L^2}\\
        & + \left\|\int_0^\infty \dt v^{B,1,1}(t,X,y) \d X\right\|_{H^1(\R_+)} + \left\|\int_0^\infty \dt u^{B,1,2}(t,x,Y) \d Y\right\|_{H^1(\R_+)}.
    \end{aligned}
\end{equation}
Therefore, by \eqref{pro4.3-3}, $\nabla p^{E,2} \in L^\infty(0,T; L^2(\Q))$.
\end{proof}

\section{Well-posedness of the corner layer problem}
\setcounter{proposition}{0} 
In this section, we study the corner layer problem \eqref{u-C,1}.
First, introduce some weighted Sobolev spaces as follows.

\begin{defn}\label{defn4-1} (1) For any fixed integers $k,m\ge 0$, denote by 
\begin{equation*}
    H^{k,m}_{X,Y}(\Q) = \{u \in L^2(\Q): \langle (X,Y)\rangle^k \partial_{X,Y}^\alpha u \in L^2(\Q), \quad \text{for }|\alpha|\le m\},
\end{equation*}
with the norm
\begin{equation*}
    \|u\|_{H^{k,m}_{X,Y}(\Q)} = \sum_{0\le j\le m}\|u\|_{k,j} = \sum_{0\le j\le m} \sum_{|\alpha| = j} \|\langle (X,Y)\rangle^k \partial_{X,Y}^\alpha u\|_{L^2(\Q)},
\end{equation*}
where $\langle(X,Y)\rangle = (1+X^2 + Y^2)^{\frac{1}{2}}$.

(2)
For $l \in \R_+$, $\beta \in \R$, denote by $V^l_{2,\beta}(\Q)$ the closure of $C^\infty_c(\overline{\Q}\backslash\{0\})$ with respect to the norm
\begin{equation}\label{norm-xi}
    \|u\|_{V^l_{2,\beta}(\Q)} = \left\|e^{(\beta-l+1)\xi} \mathcal{E} u \right\|_{H^l(\mathcal{C})},
\end{equation}
where $\mathcal{E}$ is a coordinate transform such that $(\mathcal{E} u)(\xi,\theta) = u(e^\xi \cos \theta, e^\xi \sin \theta)$, with $\mathcal{C} = \{\xi \in \R, \theta \in (0,\frac{\pi}{2})\}$.
Similarly, $V^l_{2,\beta}(\R^+_r)$ is the closure of $C^\infty_c(\R^+)$ with respect to the norm
\begin{equation*}
    \|u\|_{V^l_{2,\beta}(\R^+_r)} = \left\|e^{(\beta-l+\frac{1}{2})\xi} \mathcal{E} u \right\|_{H^l(\R_\xi)},
\end{equation*}
where $(\mathcal{E}u)(\xi) = u(e^{\xi})$.
\end{defn}

It is easy to see that
\begin{equation}\label{V-H}
    V^m_{2,m+k}(\Q) \cap H^m(\Q) \hookrightarrow H^{k,m}_{X,Y}(\Q).
\end{equation}

\begin{proposition}\label{prop-C,1}
Let $u^{B,1,2}, v^{B,1,1}$ be the boundary layer profiles given in Proposition \ref{prop-Prandtl}. For any given $\u^C_0 \in \cap_{k\in \mathbb{N}}H^{k,1}_{X,Y}(\Q)$, there exists a unique solution $\u^{C,1}$ to the problem \eqref{u-C,1} with the initial data
$\u^{C,1}(0, X, Y)=\u^C_0(X,Y)$, satisfying 
    $$\u^{C,1}\in L^\infty(0,T; H^{k,1}_{X,Y}(\Q)), \quad  \nabla p^{C,2}\in  L^\infty(0,T;L^2_{X,Y}(\Q))$$
    for all $k \in \mathbb{N}$.
\end{proposition}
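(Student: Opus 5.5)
We plan to prove Proposition \ref{prop-C,1} by first lifting the inhomogeneous Dirichlet data and then running weighted energy estimates for a Navier--Stokes type system with Navier-type conditions. Write $a(t,Y):=\overline{u^{B,1,2}}(t,Y)$ and $b(t,X):=\underline{v^{B,1,1}}(t,X)$. By Proposition \ref{prop-Prandtl}, these satisfy $\langle Y\rangle^k a\in L^\infty(0,T;H^2_Y)\cap L^2(0,T;H^3_Y)$ and $\int_0^\infty a\,\d Y=0$, with $\partial_Y a|_{Y=0}=\partial_Y^3 a|_{Y=0}=0$; the analogous statements hold for $b$. We look for a divergence-free corrector $\mathbf{w}=\nabla^\perp\psi$ decaying rapidly at infinity, with $w_1|_{X=0}=-a$, $w_2|_{Y=0}=-b$, and the homogeneous Neumann-type conditions $\partial_Y w_1|_{Y=0}=\partial_X w_2|_{X=0}=0$. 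Concretely, we take $\psi(X,Y)=-\chi(X)A(Y)+\chi(Y)B(X)$ plus a compactly supported correction near the corner, where $A(Y)=\int_Y^\infty a$, $B(X)=\int_X^\infty b$, and $\chi$ is a cut-off equal to $1$ near $0$ with $\chi'(0)=0$. The zero-mean property of $a$ and $b$ gives $A(0)=B(0)=0$, so $A,B$ decay rapidly and $\psi$ is compatible at the corner. The remaining Neumann conditions hold because $\partial_Y a(0)=0$ and $\chi'(0)=0$; if needed, we add a corner polynomial correction, using the traces at $Y=0$ and the compatibility from Remark \ref{rmk-1}(4). The weighted regularity of $\mathbf w$ and $\partial_t\mathbf w$ follows from \eqref{pro4.3-4}.

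Setting $\mathbf{u}^{C,1}=\mathbf w+\mathbf z$, the new unknown $\mathbf z$ satisfies homogeneous conditions $z_1|_{X=0}=0$, $\partial_Y z_1|_{Y=0}=0$, $z_2|_{Y=0}=0$, $\partial_X z_2|_{X=0}=0$. These are exactly the $\alpha=0$ Navier slip conditions on $\Q$, so $\int(-\Delta\mathbf z)\cdot\mathbf z=\|\nabla\mathbf z\|^2$ and the pressure term drops after integration by parts. The equation for $\mathbf z$ has transport field $\mathbf u^{G,1}=\mathbf z+\mathbf W$, where $\mathbf W:=\mathbf w+(a,b)+(X\widehat{\partial_x u^{E,0}},Y\widehat{\partial_y v^{E,0}})$. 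The source $F$ collects $\partial_t\mathbf w-\Delta\mathbf w+M\mathbf w+B+(\mathbf W\cdot\nabla)\mathbf w+\dots$, and it is crucial that $F$ decays rapidly. The unbounded linear part $X\partial_X+Y\partial_Y$-type drift is harmless: the field $(X\widehat{\partial_x u^{E,0}},Y\widehat{\partial_y v^{E,0}})$ is divergence free because $\widehat{\partial_xu^{E,0}}+\widehat{\partial_yv^{E,0}}=0$, and it is tangent to $\partial\Q$. Hence in the $L^2$ estimate it contributes zero, and in the weighted estimates it contributes only terms of the form $\langle(X,Y)\rangle^{2k}|\mathbf z|^2$ with bounded coefficients. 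The nonlinear term $(\mathbf z\cdot\nabla)\mathbf z$ vanishes in $L^2$ since $\mathbf z$ is divergence free and tangent. For the remaining terms we use $\|M\|_{L^\infty}$, $\|(\mathbf w+(a,b))\|_{W^{1,\infty}}$, and $\|\partial_Y a\|_{L^\infty}$ from \eqref{pro4.3-Linfty}. This gives $\frac{d}{dt}\|\mathbf z\|^2+\|\nabla\mathbf z\|^2\lesssim\|\mathbf z\|^2+\|F\|^2$, which closes the $L^2$ estimate.

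Next we multiply by $\langle(X,Y)\rangle^{2k}\mathbf z$. The pressure term is no longer free, and handling it is the main obstacle. We plan to work with the vorticity $\omega=\partial_X z_2-\partial_Y z_1$, which vanishes on $\partial\Q$ by the boundary conditions, so that $p^{C,2}$ is eliminated. The vorticity equation $\partial_t\omega+\mathbf u^{G,1}\cdot\nabla\omega-\Delta\omega=\text{(lower order)}$ with Dirichlet data admits $L^2$, $H^1$, and weighted estimates by standard energy arguments. Here $\|\nabla\mathbf z\|_{L^4}$ in the nonlinear term is controlled through Gagliardo--Nirenberg together with the Hodge estimate \eqref{Hodge}; this is two-dimensional, so the estimates are global-in-time on $[0,T]$. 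The weighted bounds on $\mathbf z$ itself then follow from weighted Hodge/elliptic estimates in the quarter plane. This is where the paper's use of Kozlov's weighted spaces $V^l_{2,\beta}$ and the embedding \eqref{V-H} enter: we recover $\langle(X,Y)\rangle^k\nabla\mathbf z$ from $\langle(X,Y)\rangle^k\omega$ and lower-order weights, choosing $\beta$ away from the exceptional values of the corner operator. The pressure bound $\nabla p^{C,2}\in L^\infty L^2$ follows by reading the equation, or alternatively from the Neumann problem for $p^{C,2}$ via Lemma \ref{lemma-LN-refl}.

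Existence follows by Galerkin approximation or the even-odd reflection compactness used in Proposition \ref{prop-Euler}. Uniqueness follows from an $L^2$ difference estimate, again using that the linear drift is divergence free and tangent to the boundary.
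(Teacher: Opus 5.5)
Your route differs from the paper's in how the boundary data are lifted. The paper subtracts the harmonic Leray part $\mathbf z^{C,1}=\nabla\phi$, which solves a Neumann problem via Lemma~\ref{lemma-LN-refl} and needs Kozlov's estimate because it decays only algebraically. It then runs the $L^2$ estimate on $\mathbb P\u^{C,1}$, with cross terms such as $\langle\nabla\u^{C,1},\nabla\mathbf z^{C,1}\rangle$. Finally it introduces a second harmonic corrector $G$ to remove the Dirichlet data of $\omega^{C,1}$.

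Your stream function $\psi=-\chi(X)A(Y)+\chi(Y)B(X)$ does both jobs at once, and it has several advantages:
\begin{itemize}
\item With $\chi\equiv1$ near $0$, $A(0)=B(0)=0$ and $\dY a(0)=\dX b(0)=0$, it already satisfies all four boundary conditions, so no corner correction is needed.
\item It decays rapidly in every direction for free.
\item $\mathbf z$ obeys the $\alpha=0$ slip conditions, so your $L^2$ estimate is clean.
\item $\operatorname{curl}\mathbf z=0$ on $\partial\Q$, so no second corrector is needed.
\end{itemize}
The price is that $\mathbf w$ is not harmonic. Hence $\Delta\mathbf w$ and $\Delta\operatorname{curl}\mathbf w$ enter as sources, and you need $a,b\in L^2(0,T;H^{k,3})$, which \eqref{pro4.3-4} supplies. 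Kozlov's theory is then needed only for the div--curl recovery of $\langle(X,Y)\rangle^{2k}\nabla\mathbf z$ from $\langle(X,Y)\rangle^{2k}\omega$.

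The gap is the weighted vorticity step, which you treat as ``standard energy arguments''. Taking the curl of $M\mathbf z+((a,b)\cdot\nabla)\mathbf z$ and using $\nabla\cdot\mathbf z=0$ leaves the forcing $a''(Y)z_2-b''(X)z_1$ in the $\omega$-equation. These coefficients decay in one variable only. Pairing with $\langle(X,Y)\rangle^{4k}\omega$ therefore needs $\langle X\rangle^{2k}z_2\in L^2$ on the strip where $a''$ lives.

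The weighted div--curl estimate only gives
\[
\|\langle(X,Y)\rangle^{2k}\nabla\mathbf z\|+\|\langle(X,Y)\rangle^{2k-1}\mathbf z\|\lesssim\|\langle(X,Y)\rangle^{2k}\omega\|,
\]
so the velocity itself loses one power of the weight. The naive bound $\|a''\|_{L^\infty}\|\langle(X,Y)\rangle^{2k}z_2\|\,\|\langle(X,Y)\rangle^{2k}\omega\|$ then requires weight $2k+1$ on $\omega$, and Gronwall does not close. Besides, $a''\in L^\infty$ holds only in $L^2_t$. This is exactly the term the paper singles out as $I_4$ and handles by integrating by parts in $Y$ together with the $V^1_{2,2k}$ and $V^2_{2,2k}$ bounds for the stream function.

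In your formulation the simplest fix uses the vanishing trace $z_2|_{Y=0}=0$. From $|z_2(X,Y)|\le Y^{1/2}\|\dY z_2(X,\cdot)\|_{L^2_Y}$ and $\langle(X,Y)\rangle\le\langle X\rangle\langle Y\rangle$ you get
\[
\|\langle(X,Y)\rangle^{2k}a''z_2\|_{L^2(\Q)}\lesssim\|\langle Y\rangle^{2k+1}a''\|_{L^2_Y}\,\|\langle(X,Y)\rangle^{2k}\nabla\mathbf z\|_{L^2(\Q)}.
\]
The right side is controlled by $\|\langle(X,Y)\rangle^{2k}\omega\|$ through the div--curl estimate. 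The term $b''z_1$ is handled symmetrically using $z_1|_{X=0}=0$. With this, Gronwall closes at each fixed $k$.

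One smaller point: ``reading the equation'' does not give $\nabla p^{C,2}\in L^\infty L^2$, since neither $\dt\u^{C,1}$ nor $\Delta\u^{C,1}$ is controlled in $L^\infty L^2$. Use the Neumann problem for the pressure instead, as the paper does.
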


\begin{proof}
We shall still focus on the a priori estimates for \eqref{u-C,1} in $L^\infty(0, T; H^{k,1}_{X, Y}(\Q))$ for smooth $\u^{C,1}$. By using a standard Galerkin procedure and these estimates, one can deduce the existence of a solution. 

1) \underline{\textbf{Estimates of $\|(I-\mathbb{P})\u^{C,1}\|_{H^1}$.}}
To homogenize the boundary conditions of $\u^{C,1}$ given in \eqref{u-C,1}, we first study the equations
satisfied by $\mathbf{z}^{C,1} = (I-\mathbb{P})\u^{C,1}$, with $\mathbb{P}$ being the Leray projection,
\begin{equation}\label{z-C,1}
    \nabla \cdot \mathbf{z}^{C,1} = \nabla \times \mathbf{z}^{C,1} = 0, \quad (z^{C,1}_1)_{|X=0} = - \overline{u^{B,1,2}}, \quad (z^{C,1}_2)_{|Y=0} = - \underline{v^{B,1,1}}.
\end{equation}
It suffices to take $\mathbf{z}^{C,1} = \nabla \phi$, where $\phi$ solves
\begin{equation}\label{LN-C,1}
    \Delta \phi = 0  \quad \text{in }\Q, \quad (\dX \phi)_{|X=0} = - \overline{u^{B,1,2}}, \quad (\dY \phi)_{|Y=0} = - \underline{v^{B,1,1}}.
\end{equation}
By Proposition \ref{prop-Prandtl}, we know that $\int_{\R_+} \overline{u^{B,1,2}} \d Y = \int_{\R_+}\underline{v^{B,1,1}} \d X = 0$.
Hence, by Lemma \ref{lemma-LN-refl}, there exists $\phi \in L^\infty(0,T; H^2(\Q)/\R)$ such that
\begin{equation}\label{5.4}
    \begin{aligned}
        \|\mathbf{z}^{C,1}(t)\|_{H^1(\Q)}  = &\|\nabla \phi(t)\|_{H^1(\Q)}\lesssim \|\overline{u^{B,1,2}}(t)\|_{H^{\frac{1}{2}}(\R_+)} + \|\underline{v^{B,1,1}}(t)\|_{H^{\frac{1}{2}}(\R_+)} \\
        & + \left\|\int_0^Y \overline{u^{B,1,2}}(t,s) \d s\right\|_{L^2(\R_+)} + \left\|\int_0^X \underline{v^{B,1,1}}(t,s) \d s\right\|_{L^2(\R_+)},
    \end{aligned}
\end{equation}
which implies $\mathbf{z}^{C,1}\in L^\infty(0,T;H^1(\Q))$ by using the boundedness given in Proposition \ref{prop-Prandtl}.  Similarly, one has
\begin{equation}\label{5.5}
    \begin{aligned}
        \|\dt \mathbf{z}^{C,1}(t)\|_{L^2(\Q)} \lesssim & \left\|\int_0^Y \dt\overline{u^{B,1,2}}(t,s) \d s\right\|_{L^2(\R_+)} + \left\|\int_0^X \dt\underline{v^{B,1,1}}(t,s) \d s\right\|_{L^2(\R_+)} \\
    \lesssim & \|\dt \overline{u^{B,1,2}}(t)\|_{H^{1,0}_Y(\R_+)} + \|\dt \underline{v^{B,1,1}}(t)\|_{H^{1,0}_X(\R_+)},
    \end{aligned}
\end{equation}
which gives rise to $\dt \mathbf{z}^{C,1} \in L^\infty(0,T;L^2(\Q))$ by using Proposition \ref{prop-Prandtl} again.

On the other hand, one can write $\mathbf{z}^{C,1} = \nabla^{\perp}\psi$, with $\psi$ solving
\begin{equation}\label{LD-C,1}
    \Delta \psi = 0 \quad\text{in } \Q, \quad \psi_{|X=0} = -\int_0^Y \overline{u^{B,1,2}}(t,s) \d s, \quad \psi_{|Y=0} = - \int_0^X \underline{v^{B,1,1}}(t,s) \d s.
\end{equation}
Then, by Lemma \ref{lemma-kozlov'},
\begin{equation*}
    \begin{aligned}
        \|\mathbf{z}^{C,1}(t)\|_{V^1_{2,2}} \le \|\psi(t)\|_{V^{2}_{2,2}} \lesssim & \left\|\int_0^Y \overline{u^{B,1,2}}(t,s) \d s\right\|_{V^{\frac{3}{2}}_{2,2}} + \left\|\int_0^X \underline{v^{B,1,1}}(t,s) \d s\right\|_{V^{\frac{3}{2}}_{2,2}}\\
        \lesssim & \|\langle Y\rangle^2 \overline{u^{B,1,2}}(t)\|_{H^{\frac{1}{2}}_Y(\R_+)} + \|\langle X\rangle^2 \underline{v^{B,1,1}}(t)\|_{H^{\frac{1}{2}}_X(\R_+)}.
    \end{aligned}
\end{equation*}
Furthermore,
\begin{equation}
    \|(|X|+|Y|)\nabla \mathbf{z}^{C,1}\|_{L^\infty(0,T; L^2(\Q))} \lesssim \|\mathbf{z}^{C,1}\|_{L^\infty(0,T; V^1_{2,2})}.
\end{equation}

    2) \underline{\textbf{Estimates of $\|\mathbb{P}\u^{C,1}\|_{L^2}$.}}
    Multiplying the equation of $\u^{C,1}$ given in \eqref{u-C,1} by $\mathbb{P}\u^{C,1} = \u^{C,1} - \mathbf{z}^{C,1}$ and integrating in space and time, we get by integration by parts that
    \begin{equation}\label{5.8}
        \begin{aligned}
            & \quad \frac{1}{2} \left\|\mathbb{P} \u^{C,1}(t)\right\|^2 - \int_0^t \inp*{\Delta \u^{C,1}}{\mathbb{P}\u^{C,1} (\tau)} \\
            \le & \frac{1}{2} \left\|\mathbb{P} \u^{C}_0\right\|^2 + 
            \left|\int_0^t \inp*{(\u^{G,1} \cdot \nabla) (I-\mathbb{P})\u^{C,1}}{\mathbb{P}\u^{C,1}(\tau)}\right| + \left| \int_0^t \inp*{M\u^{C,1} + B}{\mathbb{P}\u^{C,1}(\tau)}\right|,
        \end{aligned}
    \end{equation}
    where we have used the facts that $\nabla \cdot \u^{G,1} = 0$ and $\u^{G,1}\cdot \vec{n}_{|\partial\Q} = 0$.
    
    Recalling that ${\dY u^{C,1}}_{|Y=0} = {\dX v^{C,1}}_{|X=0} = 0$, one has
    \begin{equation*}
        \begin{aligned}
            - \int_0^t \inp*{\Delta \u^{C,1}}{\mathbb{P}\u^{C,1} (\tau)} = & \int_0^t \inp*{\nabla \u^{C,1}}{\nabla (\mathbb{P} \u^{C,1})(\tau)}\\
            = & \int_0^t \|\nabla \u^{C,1}(\tau)\|^2 - \int_0^t \inp*{\nabla \u^{C,1}}{\nabla \mathbf{z}^{C,1}(\tau)}.
        \end{aligned}
    \end{equation*}
    
    By Hölder's inequality, the convection term satisfies
    \begin{equation*}
        \begin{aligned}
            \left|\int_0^t \inp*{(\u^{G,1} \cdot \nabla) \mathbf{z}^{C,1}}{\mathbb{P}\u^{C,1}(\tau)}\right| \lesssim & \| \mathbf{z}^{C,1}\|_{L^\infty(0,T;H^1)} \int_0^t \|\u^{C,1}(\tau)\|_{L^4(\Q)} \cdot \|\mathbb{P}\u^{C,1}(\tau)\|_{L^4(\Q)} \,\d \tau \\
            & + \int_0^t \left\|(\u^H \cdot \nabla \mathbf{z}^{C,1})(\tau)\right\|^2\,\d \tau + \int_0^t \|\mathbb{P}\u^{C,1}(\tau)\|^2 \, \d \tau,
        \end{aligned}
    \end{equation*}
    where $\u^H = \left(\overline{u^{B,1,2}} + X\widehat{\dx u^{E,0}}, \underline{v^{B,1,1}} + Y \widehat{\dy v^{E,0}}\right)$.
    By Gagliardo-Nirenberg's inequality and Cauchy inequality, one has
    \begin{equation*}
        \begin{aligned}
            & \quad \int_0^t \|\u^{C,1}(\tau)\|_{L^4(\Q)} \cdot \|\mathbb{P}\u^{C,1}(\tau)\|_{L^4(\Q)} \,\d \tau\\
            \le & C \left(\int_0^t \|\u^{C,1}(\tau)\|_{L^2(\Q)} \|\nabla \u^{C,1}(\tau)\|_{L^2(\Q)}\, \d\tau 
            + \int_0^t \|\mathbb{P}\u^{C,1}(\tau)\|_{L^2(\Q)} \|\nabla \left(\mathbb{P}\u^{C,1}\right)(\tau)\|_{L^2(\Q)} \,\d \tau\right) \\
            \le & C\left(\frac{1}{\eta}\int_0^t \|\mathbb{P}\u^{C,1}(\tau)\|^2_{L^2(\Q)} \d \tau + \eta \int_0^t \|\nabla \u^{C,1}(\tau)\|^2_{L^2(\Q)} \d \tau 
            + \int_0^t \|\mathbf{z}^{C,1}(\tau)\|^2_{H^1} \d \tau \right),
        \end{aligned}
    \end{equation*}
    with a small constant $\eta$, and
    \begin{equation*}
        \begin{aligned}
            \left\|(\u^H \cdot \nabla \mathbf{z}^{C,1})\right\|_{L^2(0,T;L^2(\Q))} \lesssim & \|\overline{u^{B,1,2}},\underline{v^{B,1,1}}\|_{L^2(0,T;H^1(\R_+))} \|\mathbf{z}^{C,1}\|_{L^\infty(0,T;H^1(\Q))}\\
        & + \|\widehat{\dx u^{E,0}}\|_{L^2(0,T)} \|\langle(X,Y)\rangle \nabla \mathbf{z}^{C,1}\|_{L^\infty(0,T; L^2(\Q))} \le  C_1
        \end{aligned}
    \end{equation*}
    for a positive constant $C_1$.
    In \eqref{5.8}, the lower-order terms satisfy
    \begin{equation*}
        \begin{aligned}
            & \quad \int_0^t \inp*{M\u^{C,1} + B}{\mathbb{P}\u^{C,1}(\tau)} \lesssim \|\widehat{\dx u^{E,0}}\|_{L^\infty(0,T)} \int_0^t |\u^{C,1}(\tau)| \cdot |\mathbb{P}\u^{C,1}(\tau)|  \\
            & + \int_0^t \inp*{\dY \overline{u^{B,1,2}} (v^{C,1} + \underline{v^{B,1,1}})}{ (\mathbb{P}\u^{C,1})_1}(\tau) + \inp*{\dX \underline{v^{B,1,1}} (u^{C,1} + \overline{u^{B,1,2}})}{ (\mathbb{P}\u^{C,1})_2}(\tau),
        \end{aligned}
    \end{equation*}
   in which, by integration by parts, the last term equals
    \begin{equation*}
        \begin{aligned}
            - \int_0^t  \inp*{\overline{u^{B,1,2}}} {\dY\left( (v^{C,1} + \underline{v^{B,1,1}}) (\mathbb{P}\u^{C,1})_1 \right)}(\tau) 
            - \int_0^t  \inp*{\underline{v^{B,1,1}}}{ \dX\left( (u^{C,1} + \overline{u^{B,1,2}}) (\mathbb{P}\u^{C,1})_2\right) }(\tau).
        \end{aligned}
    \end{equation*}
    Among these terms, one has
    \begin{equation*}
        \begin{aligned}
            \int_0^t  \left|\inp*{\overline{u^{B,1,2}} \dY v^{C,1}}{(\mathbb{P}\u^{C,1})_1} (\tau) \right| \le & \frac{\eta}{2} \int_0^t \|\dY v^{C,1}(\tau)\|^2 \\
            & + \frac{1}{2\eta} \|\overline{u^{B,1,2}}\|^2_{L^\infty(0,T;H^1_Y)}  \int_0^t\left\|(\mathbb{P}\u^{C,1})_1 (\tau) \right\|^2
        \end{aligned}        
    \end{equation*}
   and
   \begin{equation*}
        \begin{aligned}
            & \int_0^t  \left|\inp*{\overline{u^{B,1,2}} (v^{C,1} +  \underline{v^{B,1,1}})}{ \dY(\mathbb{P}\u^{C,1})_1} (\tau) \right| \le \frac{\eta}{2} \int_0^t \|\dY (\mathbb{P}\u^{C,1})_1 (\tau)\|^2 \\
            & \hspace{.3in}+ \frac{1}{2\eta} \|\overline{u^{B,1,2}}\|^2_{L^\infty(0,T;H^1_Y)}  \left(\int_0^t\left\|v^{C,1} (\tau) \right\|^2 + \|\underline{v^{B,1,1}}\|^2_{L^2(0,T;L^2_X)}\right)
        \end{aligned}        
    \end{equation*}
    for a small constant $\eta$. Thus,
    \begin{equation*}
        \int_0^t \inp*{M\u^{C,1} + B}{\mathbb{P}\u^{C,1}(\tau)} \le C_2 \left(1+ \eta \int_0^t \|\nabla \u^{C,1}(\tau)\|^2 + \frac{1}{\eta}\int_0^t \|\mathbb{P}\u^{C,1}(\tau)\|^2 \right)
    \end{equation*}
    for some constant $C_2$.

    Therefore, by substituting the above estimates into \eqref{5.8}, one has
    \begin{equation*}
        \begin{aligned}
            \left\|\mathbb{P} \u^{C,1}(t)\right\|^2 + \int_0^t \|\nabla \u^{C,1}(\tau)\|^2 
            \le \left\|\mathbb{P} \u^{C}_0\right\|^2 + C\left( 1 + \int_0^t \|\mathbb{P}\u^{C,1}(\tau)\|^2 \right),
        \end{aligned}
    \end{equation*}
    with a constant $C$ depending on $\u^{E,0},u^{B,1,2},v^{B,1,1}, \mathbf{z}^{C,1}$.
    By Gronwall's inequality, we conclude
    \begin{equation}\label{5.9}
        \u^{C,1} = \mathbb{P}\u^{C,1} + \mathbf{z}^{C,1} \in L^\infty(0,T;L^2(\Q)) \cap L^2(0,T; H^1(\Q)).
    \end{equation}

    3) \underline{\textbf{Estimates of the derivatives of $\u^{C,1}$.}}

    In order to obtain the weighted estimates of $\u^{C,1}$ and $\nabla \u^{C,1}$ and to avoid the weight falling on the unknown pressure term, we shall first obtain the weighted estimates of the vorticity.
    Setting $\omega^{C,1} = \dX v^{C,1} - \dY u^{C,1}$, from \eqref{u-C,1} we know that the vorticity satisfies the following problem:
\begin{equation}\label{vort-C,1}
        \begin{cases}
            \begin{aligned}
                & \dt \omega^{C,1} + \u^{G,1} \cdot \nabla \omega^{C,1} - \Delta \omega^{C,1} \\
                & \hspace{.4in} =  \left((v^{C,1} + \underline{v^{B,1,1}}) \dY^2 \overline{u^{B,1,2}} - (u^{C,1} + \overline{u^{B,1,2}}) \dX^2 \underline{v^{B,1,1}}\right), \quad \text{in }\Q,
            \end{aligned}\\
        {\omega^{C,1}}_{|X=0} = \dY \overline{u^{B,1,2}},\quad {\omega^{C,1}}_{|Y=0} = - \dX \underline{v^{B,1,1}},\\
        \lim\limits_{X,Y\to \infty} \omega^{C,1} = 0, \\
        {\omega^{C,1}}_{|t=0} = \nabla\times \u^C_0.
        \end{cases}
    \end{equation}

  (1) Estimates of the corrector $G$ for homogenizing the boundary data.
  
 To homogenize the boundary conditions  given in \eqref{vort-C,1}, we construct $G$ such that
    \begin{equation}\label{heat-G}
        \Delta G = 0 \quad \text{in } \Q, \quad G_{|X=0} =  \dY \overline{u^{B,1,2}},\quad G_{|Y=0} =  - \dX \underline{v^{B,1,1}}.
    \end{equation}
Since $\dY \overline{u^{B,1,2}}_{|Y = 0} = - \dX \underline{v^{B,1,1}}_{|X=0} = 0$, by Lemma \ref{lemma-LD-refl}, one has for $l=1,2$
\begin{equation*}
    \begin{aligned}
        \|G(t)\|_{H^l(\Q)} \lesssim & \|\dY \overline{u^{B,1,2}}(t)\|_{\dot{H}^{-\frac{1}{2}}\cap H^{l}(\R_+)} + \|\dX \underline{v^{B,1,1}}(t)\|_{\dot{H}^{-\frac{1}{2}}\cap H^{l}(\R_+)}  \\
    \lesssim & \|\overline{u^{B,1,2}}(t)\|_{H^{l+1}(\R_+)} + \|\underline{v^{B,1,1}}(t)\|_{H^{l+1}(\R_+)},
    \end{aligned}
\end{equation*}
and
\begin{equation*}
    \begin{aligned}
        \|\dt G(t)\|_{L^2(\Q)} \lesssim & \|\dt \dY \overline{u^{B,1,2}}(t)\|_{\dot{H}^{-\frac{1}{2}}(\R_+)} + \|\dt \dX \underline{v^{B,1,1}}(t)\|_{\dot{H}^{-\frac{1}{2}}(\R_+)}  \\
    \lesssim & \|\dt \overline{u^{B,1,2}}(t)\|_{H^{1}(\R_+)} + \|\dt \underline{v^{B,1,1}}(t)\|_{H^{1}(\R_+)},
    \end{aligned}
\end{equation*}
Thus, we conclude that
\begin{equation}\label{5.12}
    \begin{aligned}
        G \in L^\infty(0,T;H^1(\Q))\cap L^2(0,T; H^2(\Q)),\quad \dt G \in L^2(0,T;L^2(\Q)).
    \end{aligned}
\end{equation}
On the other hand, for any $k \in \mathbb{N}\backslash \{0\}$, by using Lemma \ref{lemma-kozlov'} with $l = 2$ and $\beta = 2k$, we get
\begin{equation*}
    \begin{aligned}
        \|G(t)\|_{V^2_{2,2k}(\Q)} \lesssim & \|\dY \overline{u^{B,1,2}}(t)\|_{V^{\frac{3}{2}}_{2,2k}(\R_+)} + \|\dX \underline{v^{B,1,1}}(t)\|_{V^{\frac{3}{2}}_{2,2k}(\R_+)} \\
        \lesssim & \|\dY \overline{u^{B,1,2}}(t)\|_{V^{2}_{2,2k+\frac{1}{2}}(\R_+)} + \|\dX \underline{v^{B,1,1}}(t)\|_{V^{2}_{2,2k+\frac{1}{2}}(\R_+)}\\
        \lesssim & \sum_{0\le j\le 2} \left(\left\|\overline{u^{B,1,2}}(t)\right\|_{H^{2k-\frac{3}{2}+j,j+1}_Y(\R_+)} + \left\|\underline{v^{B,1,1}}(t)\right\|_{H^{2k-\frac{3}{2}+j,j+1}_X(\R_+)}\right)\\
        \lesssim & \left\|\overline{u^{B,1,2}}(t)\right\|_{H^{2k+1,3}_Y(\R_+)} + \left\|\underline{v^{B,1,1}}(t)\right\|_{H^{2k+1,3}_X(\R_+)} 
    \end{aligned}
\end{equation*}
which is bounded in $L^2(0,T)$. Similarly, by using Lemma \ref{lemma-kozlov'} with $l = 0$ and $\beta = 2k$, we get
\begin{equation*}
    \begin{aligned}
        \|\dt G(t)\|_{V^0_{2,2k}(\Q)} 
        \lesssim & \|\dt \overline{u^{B,1,2}}(t)\|_{H^{2k+1,1}(\R_+)} + \|\dt \underline{v^{B,1,1}}(t)\|_{H^{2k+1,1}(\R_+)}
    \end{aligned}
\end{equation*}
which is bounded in $L^2(0,T)$.
Therefore, by \eqref{V-H}, one has
\begin{equation}\label{G-weight}
    G \in L^2(0,T;H^{2k,2}_{X,Y}(\Q)), \quad \dt G \in L^2(0,T; H^{2k,0}_{X,Y}(\Q)),\quad \forall k \in \mathbb{N}.
\end{equation}

(2) Estimates of the vorticity $\omega^{C,1}$.

    Plugging $\omega^{C,1} = \omega^C + G$ into \eqref{vort-C,1}, we obtain that $\omega^{C}$ satisfies the following problem:
    \begin{equation}\label{vort-C}
        \begin{cases}
            \dt \omega^C + \u^{G,1} \cdot \nabla \omega^{C} - \Delta \omega^C + \dt G + \u^{G,1} \cdot \nabla G = F^C ,\quad \text{in }\Q,\\
            {\omega^{C}}_{|X=0} = 0,\quad {\omega^{C}}_{|Y=0} = 0,\\
        \lim_{X,Y\to \infty} \omega^{C} = 0, \\
        {\omega^{C}}_{|t=0} = \nabla\times \u^C_0 - G(0),
        \end{cases}
    \end{equation}
    where $F^C =  \left((v^{C,1} + \underline{v^{B,1,1}}) \dY^2 \overline{u^{B,1,2}} - (u^{C,1} + \overline{u^{B,1,2}}) \dX^2 \underline{v^{B,1,1}}\right)$, and $G(0)$ stands for the initial value of $G$.

Multiplying \eqref{vort-C} by $\langle (X,Y) \rangle^{4k} \omega^{C}$ and integrating over $(0,t)\times \Q$, we obtain
\begin{equation}\label{I0}
    \frac{1}{2}\|\omega^C(t)\|^2_{H^{2k,0}_{X,Y}(\Q)} + \int_0^t\|\nabla \omega^C(\tau)\|^2_{H^{2k,0}_{X,Y}(\Q)} = \frac{1}{2}\|\omega^C(0)\|^2_{H^{2k,0}_{X,Y}(\Q)}+
\sum_{i=1}^4I_i
\end{equation}
where 
$$
\begin{cases}
I_1=- 4k \int_0^t 
        \inp*{\langle (X,Y) \rangle^{4k-2} \left(X \dX \omega^C + Y \dY \omega^C\right)}{\omega^C(\tau)}\\[2mm]
I_2=2k \int_0^t \int_{\Q} \langle (X,Y) \rangle^{4k-2} \left(X u^{G,1} + Y v^{G,1}\right)  |\omega^{C}|^2(\tau)\\[2mm]
I_3=- \int_0^t \inp*{\dt G + \u^{G,1}\cdot \nabla G}{\langle (X,Y) \rangle^{4k} \omega^C(\tau)}\\[2mm]
I_4=\int_0^t \inp*{F^C}{\langle (X,Y) \rangle^{4k} \omega^C(\tau)}.
    
\end{cases}
$$

\underline{Estimate $I_1$.} By $|X|+|Y| \lesssim \langle (X,Y) \rangle^2$, one has
\begin{equation}\label{I1}
    \begin{aligned}
        |I_1| \le & \eta \int_0^t \|\nabla \omega^C(\tau)\|^2_{H^{2k,0}_{X,Y}(\Q)} + \frac{8k^2}{\eta} \int_0^t \|\omega^{C,1}(\tau)\|^2_{H^{2k,0}_{X,Y}(\Q)}
    \end{aligned}
\end{equation}
for some small constant $\eta$.

\underline{Estimate $I_2$.} Recalling that $\u^{G,1} = \u^{C,1} + \left(\overline{u^{B,1,2}} , \underline{v^{B,1,1}} \right)^T + \widehat{\dx u^{E,0}}\left( X , -Y \right)^T$, by Hölder's inequality and Gagliardo-Nirenberg's inequality, one has
\begin{equation}\label{I2}
    \begin{aligned}
        |I_2| 
        & 
        \lesssim 
        \sup_{t\in [0,T]} \left(\|\u^{C,1}\|_{L^2(\Q)} + \left\|\frac{X}{\langle (X,Y)\rangle^2 }\overline{u^{B,1,2}}(Y), \frac{Y}{\langle (X,Y)\rangle^2 }\underline{v^{B,1,1}}(X)\right\|_{L^2(\Q)} \right) \\
        & \cdot \int_0^t \|\langle (X,Y) \rangle^{2k} \omega^{C}(\tau)\|^2_{L^4}  + \sup_{t\in [0,T]} |\widehat{\dx u^{E,0}}| \int_0^t \|\langle (X,Y) \rangle^{2k} \omega^{C}(\tau)\|^2_{L^2} 
        \\
        & 
        \le C \left( \left(2k^2+\frac{1}{\eta} \right)\int_0^t \|\omega^{C}(\tau)\|^2_{H^{2k,0}_{X,Y}(\Q)}  + \eta \int_0^t\|\nabla \omega^{C}(\tau)\|^2_{H^{2k,0}_{X,Y}(\Q)} \right),
    \end{aligned}
\end{equation}
for a small constant $\eta>0$, where $C$ is a constant depending on $\|\u^{C,1}\|_{L^\infty(0,T;L^2(\Q))}$, $\|\overline{u^{B,1,2}}, \underline{v^{B,1,1}}\|_{L^\infty(0,T;L^2(\R_+))}$, and $\|\widehat{\dx u^{E,0}}\|_{L^\infty(0,T)}$.

\underline{Estimate $I_3$.}
By integration by parts, one has
\begin{equation*}
    \begin{aligned}
      |I_3| \le & \int_0^t \|\dt G(\tau)\|^2_{H^{2k,0}_{X,Y}(\Q)} 
        + \int_0^t \|(\u^{G,1}-\u^{C,1})\cdot \nabla G(\tau)\|^2_{H^{2k,0}_{X,Y}(\Q)} \\
        & + \int_0^t \|\omega^C(\tau)\|^2_{H^{2k,0}_{X,Y}(\Q)} + \int_0^t \left|\inp*{\langle (X,Y) \rangle^{4k} \u^{C,1} \cdot \nabla \omega^C }{G(\tau)}\right|\\
        & + 4k\int_0^t \left|\inp*{\langle (X,Y) \rangle^{4k-2}\left(X u^{C,1} + Y v^{C,1}\right)\omega^C}{G(\tau)}\right| \\
         \le & \|\dt G\|^2_{L^2(0,T;H^{2k,0}_{X,Y}(\Q))} + (1+2k)\int_0^t \|\omega^C(\tau)\|^2_{H^{2k,0}_{X,Y}(\Q)} \\
        &
        + C\|\overline{u^{B,1,2}}, \underline{v^{B,1,1}}\|^2_{L^\infty(0,T;L^2(\R_+))} \|\nabla G\|^2_{L^2(0,T;H^{2k,1}_{X,Y}(\Q))} \\
        & + C\|\widehat{\dx u^{E,0}}\|^2_{L^\infty(0,T)} \|\nabla G\|^2_{L^2(0,T; H^{2k+1,0}_{X,Y}(\Q))} + \eta \int_0^t \|\nabla \omega^{C,1}(\tau)\|^2_{H^{2k,0}_{X,Y}(\Q)}
        \\
        & + (2k+\frac{1}{\eta}) \|\u^{C,1}\|^2_{L^\infty(0,T;L^2(\Q))} \|G\|^2_{L^2(0,T; H^{2k,2}_{X,Y}(\Q))} ,
    \end{aligned}
\end{equation*}
where we have used the embedding $H^2(\Q) \hookrightarrow L^\infty(\Q)$.
Thus, there exists $C$ depending on $k$, $\|\u^{C,1}\|_{L^\infty(0,T;L^2(\Q))}$, $\|\overline{u^{B,1,2}}, \underline{v^{B,1,1}}\|_{L^\infty(0,T;L^2(\R_+))}$, $\|\widehat{\dx u^{E,0}}\|_{L^\infty(0,T)}$, $\|\dt G\|_{L^2(0,T; H^{2k,0}_{X,Y}(\Q))}$, $\|G\|_{L^2(0,T; H^{2(k+1),2}_{X,Y}(\Q))}$ and a small $\eta$, such that
\begin{equation}\label{I3}
    |I_3| \le \eta \int_0^t \|\nabla \omega^{C,1}(\tau)\|^2_{H^{2k,0}_{X,Y}(\Q)} + C \int_0^t \|\omega^{C,1}(\tau)\|^2_{H^{2k,0}_{X,Y}(\Q)} + C.
\end{equation}

\underline{Estimate $I_4$.} To estimate $I_4$ is more difficult, for example, since $v^{C,1} \dY^2 \overline{u^{B,1,2}}$ only absorbs $(1+Y^{2k})$ but not $X^{2k}$, we shall use integration by parts and the relations given in Lemma \ref{lemma-kozlov'} to reveal its structure. We first estimate:
\begin{equation}\label{5.19}
    \begin{aligned}
        & \quad \begin{aligned}
            \int_0^t \left|\inp*{v^{C,1} \dY^2 \overline{u^{B,1,2}}}{X^{4k} \omega^C(\tau)}\right|   = &  \int_0^t \left|\inp*{ \dY v^{C,1} \dY \overline{u^{B,1,2}}}{X^{4k} \omega^C(\tau)}\right| \\
        & + \int_0^t \left|\inp*{v^{C,1} \dY \overline{u^{B,1,2}}}{X^{4k} \dY \omega^C(\tau)} \right| 
        \end{aligned}\\
        \lesssim & \|\overline{u^{B,1,2}}\|_{L^\infty(0,T;H^2(\R_+))} \left(\int_0^t \left|\inp*{\dY v^{C,1} }{X^{4k} \omega^C(\tau)} \right| + \int_0^t \left|\inp*{v^{C,1} }{X^{4k} \dY \omega^C(\tau)} \right|\right),
    \end{aligned}
\end{equation}
where we have used the fact that $\|\dY \overline{u^{B,1,2}}\|_{L^\infty(0,T;L^\infty(\R_+))} \lesssim \|\overline{u^{B,1,2}}\|_{L^\infty(0,T;H^2(\R_+))}$.
For two terms on the right hand side of \eqref{5.19}, one has
\begin{equation}\label{5.20}
    \begin{aligned}
        \int_{\Q}\left|\dY v^{C,1} \cdot X^{4k} \omega^C(\tau) \right| \d X \d Y \le & \|X^{2k}\dY v^{C,1}(\tau)\|_{L^2(\Q)} \|X^{2k} \omega^{C}(\tau)\|_{L^2(\Q)} \\
        \lesssim & \|\dY v^{C,1}(\tau)\|_{V^0_{2,2k}(\Q)} \|\omega^C(\tau)\|_{V^0_{2,2k}(\Q)},
    \end{aligned}
\end{equation}
and
\begin{equation}\label{5.21}
    \begin{aligned}
    \int_{\Q}\left|v^{C,1} \cdot X^{4k} \dY \omega^C(\tau) \right| \d X \d Y \le & \|v^{C,1}(\tau)\|_{V^1_{2,2k}(\Q)} \|X^{4k} \dY \omega^{C}(\tau)\|_{V^{-1}_{2,-2k}(\Q)} \\
        \lesssim & \|v^{C,1}(\tau)\|_{V^1_{2,2k}(\Q)} \|\dY \omega^C(\tau)\|_{V^{-1}_{2,2k}(\Q)} \\
\lesssim & \|v^{C,1}(\tau)\|_{V^1_{2,2k}(\Q)} \| \omega^C(\tau)\|_{V^{0}_{2,2k}(\Q)}.
    \end{aligned}
\end{equation}
by using the inequality given in \eqref{V-1}.

Now we study the relations between $\|\dY v^{C,1}(\tau)\|_{V^0_{2,2k}(\Q)}$, $\|v^{C,1}(\tau)\|_{V^1_{2,2k}(\Q)}$, and $\| \omega^C(\tau)\|_{V^{0}_{2,2k}(\Q)}$.
In fact, one can recover $\u^{C,1}$ by the following elliptic system:
\begin{equation*}
    \nabla \times \u^{C,1} = \omega^C + G, \quad \nabla \cdot \u^{C,1} = 0, \quad {u^{C,1}}_{|X=0} = - \overline{u^{B,1,2}},\, {v^{C,1}}_{|Y=0} = - \underline{v^{B,1,1}}.
\end{equation*}
Then the stream function $\phi^{C,1}$ of $\u^{C,1} = (-\dY \phi^{C,1}, \dX \phi^{C,1})$ satisfies
\begin{equation*}
        \Delta \phi^{C,1} = \omega^C + G,\quad
        {\phi^{C,1}}_{|X=0} = \int_0^Y \overline{u^{B,1,2}}(t,Y') \d Y', \, {\phi^{C,1}}_{|Y=0} = - \int_0^X \underline{v^{B,1,1}}(t,X') \d X'.
\end{equation*}
According to Lemma \ref{lemma-kozlov'} with $l = 2, \beta = 2k$, we have
\begin{equation*}
    \begin{aligned}
        \|\phi^{C,1}(t)\|_{V^2_{2,2k}(\Q)}  \lesssim & \|\omega^C(t)\|_{V^0_{2,2k}(\Q)} + \|G(t)\|_{V^0_{2,2k}(\Q)} \\
        & + \left\|\int_0^Y \overline{u^{B,1,2}}(t,Y') \d Y'\right\|_{V^{\frac{3}{2}}_{2,2k}(\R_+)} + \left\|\int_0^X \underline{v^{B,1,1}}(t,X') \d X'\right\|_{V^{\frac{3}{2}}_{2,2k}(\R_+)}.
    \end{aligned}
\end{equation*}
Here $\|G(t)\|^2_{V^0_{2,2k}(\Q)} \lesssim \|G(0)\|^2_{V^0_{2,2k}(\Q)} + \frac{1}{2}\int_0^t \|\dt G(\tau)\|^2_{V^0_{2,2k}(\Q)}$ is bounded in $L^\infty(0,T)$, and 
\begin{equation*}
    \begin{aligned}
       \left\|\int_0^Y \overline{u^{B,1,2}}(t,Y') \d Y'\right\|_{V^{\frac{3}{2}}_{2,2k}(\R_+)} \le & \left\|\int_0^Y \overline{u^{B,1,2}}(t,Y') \d Y'\right\|_{V^{2}_{2,2k+\frac{1}{2}}(\R_+)} \\
        \le & \left\|\int_0^Y \overline{u^{B,1,2}}(t,Y') \d Y'\right\|_{H^{2k+1,2}_Y(\R_+)} \lesssim \|\overline{u^{B,1,2}}(t)\|_{H^{2k+2,1}_{Y}(\R_+)},
    \end{aligned}
\end{equation*}
and is also bounded in $L^\infty(0,T)$, as is $\left\|\int_0^X \underline{v^{B,1,1}}(t,X') \d X'\right\|_{V^{\frac{3}{2}}_{2,2k}(\R_+)}$. Then there exists a constant $C_0$ such that
\begin{equation*}
    \|\phi^{C,1}(t)\|^2_{V^2_{2,2k}(\Q)} \le  C_0 \|\omega^C(t)\|^2_{V^0_{2,2k}(\Q)} + C_0.
\end{equation*}
Hence,
\begin{equation}\label{V-control}
    \begin{aligned}
        & \|\dY v^{C,1}(\tau)\|_{V^0_{2,2k}(\Q)} \lesssim \|\nabla^2 \phi^{C,1}(\tau)\|_{V^{0}_{2,2k}(\Q)} \lesssim  \|\phi^{C,1}(\tau)\|_{V^{2}_{2,2k}(\Q)},\\
        &  \|v^{C,1}(\tau)\|_{V^1_{2,2k}(\Q)} \lesssim  \|\nabla \phi^{C,1}(\tau)\|_{V^{1}_{2,2k}(\Q)} \lesssim  \|\phi^{C,1}(\tau)\|_{V^{2}_{2,2k}(\Q)},
    \end{aligned}
\end{equation}
are both controlled by $C_0  \|\omega^C(\tau)\|^2_{V^0_{2,2k}(\Q)} + C_0$.
Therefore, by plugging \eqref{V-control} in \eqref{5.20} and \eqref{5.21}, from \eqref{5.19} we get 
\begin{equation}\label{I4}
    \int_0^t \left|\inp*{v^{C,1} \dY^2 \overline{u^{B,1,2}}}{X^{4k} \omega^C(\tau)}\right|  \lesssim 1 + \int_0^t \|\omega^C(\tau)\|^2_{H^{2k,0}_{X,Y}(\Q)}.
\end{equation}
Similar control holds for $\int_0^t \left|\inp*{u^{C,1} \dX^2 \underline{v^{B,1,1}}}{Y^{4k} \omega^C(\tau)}\right|$, a term contained in $I_4$.
Denote by $I_4'$ the remaining terms given in $I_4$. It's easy to see that
\begin{equation}\label{I4'}
    \begin{aligned}
        |I'_4| \lesssim & \int_0^t \inp*{v^{C,1} \langle Y\rangle^{2k} \dY \overline{u^{B,1,2}}}{\langle Y\rangle^{2k} \omega^C(\tau)} + \int_0^t \inp*{u^{C,1} \langle X\rangle^{2k} \dX \underline{v^{B,1,1}}}{\langle X\rangle^{2k} \omega^C(\tau)} \\
        & + \int_0^t\inp*{\langle (X,Y)\rangle^{2k}  \left(\underline{v^{B,1,1}} \dY \overline{u^{B,1,2}} - \overline{u^{B,1,2}} \dX^2 \underline{v^{B,1,1}}\right)}{\langle (X,Y)\rangle^{2k} \omega^C(\tau)} \\
         \lesssim 
         &
         \|\dY \overline{u^{B,1,2}}\|^2_{L^\infty(0,T; H^{2k,0}_Y(\R_+))}\int_0^t \left(\|v^{C,1}(\tau)\|^2_{H^1(\Q)} + \|\underline{v^{B,1,1}}(\tau)\|^2_{H^{2k,0}_X(\R_+)} \right) \\
        & + \|\dX \underline{v^{B,1,1}}\|^2_{L^\infty(0,T; H^{2k,0}_X(\R_+))}\int_0^t \left(\|u^{C,1}(\tau)\|^2_{H^1(\Q)} + \|\overline{u^{B,1,2}}(\tau)\|^2_{H^{2k,0}_Y(\R_+)} \right)\\
        & + \int_0^t \|\omega^C(\tau)\|^2_{H^{2k,0}_{X,Y}(\Q)}.
    \end{aligned}
\end{equation}

By plugging the estimates \eqref{I1}, \eqref{I2}, \eqref{I3}, \eqref{I4} and \eqref{I4'} into \eqref{I0}, and use Gronwall's inequality, it follows
\begin{equation*}
    \omega^C \in L^\infty(0,T; H^{2k,0}_{X,Y}(\Q)),
\end{equation*}
for any $k \in \mathbb{N}$. 
With the help of \eqref{V-control} and the embedding $V^{1}_{2,2k} \hookrightarrow V^{0}_{2,2k-1}$, we conclude that
\begin{equation}\label{u-C-decay}
    \u^{C,1} \in L^\infty(0,T; H^{2k-1,0}_{X,Y}(\Q)), \quad \nabla \u^{C,1} \in L^\infty(0,T; H^{2k,0}_{X,Y}(\Q)),
\end{equation}
for any $k \in \mathbb{N}$. 
Using the inequality $$ \langle (X,Y)\rangle^{2k-1} \lesssim \langle (X,Y)\rangle^{2k} \lesssim \langle (X,Y)\rangle^{2k+1},$$
the conclusion $\u^{C,1}\in L^\infty(0,T; H^{k,1}_{X,Y}(\Q))$  follows immediately.

In particular, $\u^{C,1} \in L^\infty(0,T;H^1_{X,Y}(\Q))$. A standard difference argument yields the uniqueness of the solution to the problem \eqref{u-C,1}. By an argument similar to that in the last part of the proof of Proposition \ref{prop-E,2}, one has $\nabla p^{C,2} \in L^\infty(0,T;L^2_{X,Y}(\Q))$.
\end{proof}

\section{Justification of the asymptotic expansion in the small viscosity limit}

The aim of this section is to prove the  asymptotic expansion \eqref{expansion} in the small viscosity limit. The key point is to establish the estimates of the remainder uniformly in the viscosity. 

Throughout this section, $C$ may depend on $|\alpha|$ and on the fixed profiles, but is independent of $\epsilon$.

\subsection{The equations of remainder $R^\epsilon$}

To study the asymptotic expansion \eqref{expansion} of the solution  $(u^{\epsilon},v^{\epsilon})$ to the problem \eqref{INSE}, we are going to seek the solution of \eqref{INSE} in the form
\begin{equation}\label{exp-u,v}
    \begin{aligned}
        \u^\epsilon(t,x,y)  = & \u^{E,0}(t,x,y)
        + \sqrt{\epsilon} \left(u^{B,1,2}(t,x,\frac{y}{\sqrt{\epsilon}}), v^{B,1,1}(t,\frac{x}{\sqrt{\epsilon}},y) \right)^T \\
        & + \sqrt{\epsilon} \u^{C,1}\left(t, \frac{x}{\sqrt{\epsilon}}, \frac{y}{\sqrt{\epsilon}}\right) + \epsilon \u^{E,2}(t,x,y)  + \epsilon \left(u^{B,2,1}(t,\frac{x}{\sqrt{\epsilon}}, y), v^{B,2,2}(t,x,\frac{y}{\sqrt{\epsilon}})\right)^T \\
        & + \epsilon R^\epsilon (t,x,y) \\
         =: & \u^a + \epsilon R^\epsilon,
        \end{aligned}
\end{equation}
where $R^\epsilon = (R_1^\epsilon, R_2^\epsilon)^T$, and write the pressure $p^\epsilon$ in the form:
\begin{equation}\label{exp-p}
    \begin{aligned}
        p^\epsilon (t,x,y)  = & p^{E,0}(t,x,y)
        + \epsilon \left(p^{E,2}(t,x,y) + p^{C,2}\left(t,\frac{x}{\sqrt{\epsilon}},\frac{y}{\sqrt{\epsilon}}\right)\right) \\
        & + \epsilon \pi^\epsilon(t,x,y).
        \end{aligned}
\end{equation}
For $k\in \mathbb{N}$, introduce the notations 
\begin{equation}
\begin{cases}
    \overline{u_{(k)}}(t,x,y) = u(t,x,y) - \sum_{l=0}^k \frac{x^l}{l!}\overline{\partial_x^l u}(t,y),\\
    \underline{u_{(k)}}(t,x,y) = u(t,x,y) - \sum_{l=0}^k \frac{y^l}{l!}\underline{\partial_y^l u}(t,x),\\
    \widehat{u_{(k)}}(t,x,y) = u(t,x,y) - \sum_{0\le i+j\le k} \frac{x^i}{i!}\frac{y^j}{j!}\widehat{\partial_x^i\partial_y^j u}(t).
    \end{cases}
\end{equation}
Plugging the expansions \eqref{exp-u,v} and \eqref{exp-p} into the momentum equation in \eqref{INSE}, using the equations given in \eqref{E,0}, \eqref{B-1-2}, \eqref{u-C,1}, \eqref{E,2}, we obtain that the remainder $R^\epsilon$ must satisfy the following equations:
    \begin{equation}\label{remainder}
    \begin{cases}
        \partial_t R^\epsilon + (\u^\epsilon\cdot \nabla) R^\epsilon + (R^\epsilon\cdot \nabla) \u^a + \nabla \pi^\epsilon - \epsilon\Delta R^\epsilon = - F^\epsilon , \quad \text{in }\Q\\
        \nabla \cdot R^\epsilon = 0, \quad \text{in }\Q\\
        R^\epsilon\cdot \vec{n} = 0, \text{on }\partial\Q\\
        \left(\partial_x R^\epsilon_2 - 2\alpha R^\epsilon_2\right)_{|x=0} = \overline{b^\epsilon_1},\quad \left(\partial_y R^\epsilon_1 - 2\alpha R^\epsilon_1\right)_{|y=0} = \underline{b^\epsilon_2},\\
        {R^\epsilon}_{|t=0} = 0.
    \end{cases}
    \end{equation}
where $F^\epsilon = (F_1^\epsilon, F_2^\epsilon)^T$, with $F^\epsilon_1 = \sum_{l=0}^2 \epsilon^{-\frac{l}{2}} F^\epsilon_{1,l}$, $F^\epsilon_{1,0} =  F^{\epsilon,\flat,1}_{1,0} + F^{\epsilon,\flat,2}_{1,0} + F^{\epsilon,\sharp}_{1,0}:$
\begin{equation}\label{F101}
     \begin{cases} \begin{aligned}
F^{\epsilon,\flat,1}_{1,0}  = & \left( u^{B,1,2} + u^{C,1} \right) \dx u^{B,1,2} + \sqrt{\epsilon} u^{E,2} \dx u^{B,1,2} \\
        & + v^{E,2} \dY u^{B,1,2} + \sqrt{\epsilon} \left(u^{B,1,2} \dx u^{E,2} + v^{B,1,1} \dy u^{E,2}\right) - \sqrt{\epsilon} \dx^2 u^{B,1,2}\\
        & + \partial_t u^{B,2,1} 
    + \left(u^{B,1,2} + u^{C,1} + \epsilon^{\frac{1}{2}} u^{E,2} \right)  \partial_X u^{B,2,1} \\
    & + \left(v^{E,0} + \epsilon^{\frac{1}{2}} (v^{B,1,1} + v^{C,1}) + \epsilon v^{E,2} \right) \partial_y u^{B,2,1}\\
    & + u^{B,2,1} \left(\partial_x u^{E,0} + \epsilon^{\frac{1}{2}} \dx u^{B,1,2} + \dX u^{C,1} + \epsilon \dx u^{E,2} \right)\\
    & + v^{B,2,2} \left(\partial_y u^{E,0} + \dY u^{B,1,2} + \dY u^{C,1} + \epsilon \dy u^{E,2} \right) \\
    & + \epsilon^{\frac{1}{2}} u^{B,2,1} \dX u^{B,2,1} + \epsilon v^{B,2,2} \dy u^{B,2,1},
    \end{aligned}\\
\begin{aligned}
        F^{\epsilon,\flat,2}_{1,0}  = & \epsilon^{\frac{1}{2}}\left(\underline{u^{E,0}_{(0)}} + u^{B,1,2} + u^{C,1} + \epsilon^{\frac{1}{2}}(u^{E,2} + u^{B,2,1} + u^{B,2,2})\right) \dx u^{B,2,2} \\
        & + \left(v^{B,1,1} + v^{C,1} + \epsilon^{\frac{1}{2}}(v^{E,2} + v^{B,2,1} + v^{B,2,2})\right) \dY u^{B,2,2} \\
        & + u^{B,2,2} \left(\dx \underline{u^{E,0}_{(0)}} + \epsilon^{\frac{1}{2}} \dx u^{B,1,2} + \dX u^{C,1} + \epsilon \dx u^{E,2} + \epsilon^{\frac{1}{2}} \dX u^{B,2,1}\right) \\
        & + v^{B,2,1} \left(\dy u^{E,0} + \dY u^{B,1,2} + \dY u^{C,1} + \epsilon (\dy u^{E,2} + \dy u^{B,2,1})\right)
    \end{aligned}\\
\begin{aligned}
F^{\epsilon,\sharp}_{1,0}  = &\epsilon \left(\u^{E,2} \cdot \nabla_{x,y} u^{E,2} - \Delta u^{E,2}\right) + \sqrt{\epsilon} \dx \left(u^{E,2} u^{C,1}(t,\frac{x}{\sqrt{\epsilon}},\frac{y}{\sqrt{\epsilon}})\right) \\
& + \sqrt{\epsilon} \dy \left(v^{E,2} u^{C,1}(t,\frac{x}{\sqrt{\epsilon}},\frac{y}{\sqrt{\epsilon}})\right) - \epsilon \Delta_{x,y} \left( u^{B,2,1}(t,\frac{x}{\sqrt{\epsilon}},y) \right) \\
& - \epsilon \dx^2 u^{B,2,2},
\end{aligned}
\end{cases}
\end{equation}

\begin{equation}\label{F11}
\begin{cases}
    \begin{aligned}
        F^\epsilon_{1,1}  = & \underline{u^{E,0}_{(0)}} \dx u^{B,1,2} + u^{B,1,2} \dx \underline{u^{E,0}_{(0)}} + \overline{u^{B,1,2}_{(0)}} \dX u^{C,1} + u^{C,1} \widehat{\dx u^{E,0}_{(0)}} \\
        & + (v^{B,1,1}+ v^{C,1}) \dy \overline{u^{E,0}_{(0)}} + \underline{v^{B,1,1}_{(0)}} \dY u^{C,1} 
        + v^{B,1,1} \dY \overline{u^{B,1,2}_{(0)}} \\
        & + \underline{v^{B,1,1}_{(0)}} \dY \overline{u^{B,1,2}} 
        + v^{C,1} \dY \overline{u^{B,1,2}_{(0)}} + \overline{u^{E,0}_{(0)}} \dX u^{B,2,1} + \underline{v^{E,0}_{(1)}} \dY u^{B,2,2},
    \end{aligned}
    \\[10mm]
    \begin{aligned}
        F^\epsilon_{1,2}  = \widehat{u^{E,0}_{(1)}} \dX u^{C,1} + \underline{v^{E,0}_{(1)}}\dY u^{B,1,2} + \widehat{v^{E,0}_{(1)}} \dY u^{C,1},
    \end{aligned}
    \end{cases}
\end{equation}
and
\begin{equation}\label{b}
\begin{aligned}
    b^\epsilon_1 = \frac{2\alpha}{\sqrt{\epsilon}}  (v^{B,1,1} + v^{C,1})  +  \left(2\alpha v^{B,2,2} - \dx v^{B,2,2}\right),
\end{aligned}
\end{equation}
and $F^\epsilon_2, b^\epsilon_2$ have the same form, with the roles of $(u,x,Y, \underline{(\cdot)}, (\cdot)^{B,k,2})$ and $(v,y, X, \overline{(\cdot)}, (\cdot)^{B,k,1})$ interchanged.

\subsection{Estimates of the source term $F^\epsilon$}
We start by using the regularity of profiles given in Theorem \ref{thm-Euler} (with $m=4$), Propositions \ref{prop-Prandtl}--\ref{prop-C,1} to deduce some uniform smallness of $F^\epsilon_1$ and $b^\epsilon_1$ in various norms. 

First, since $\u^{C,1} \in L^\infty(0,T; H^{k,1}_{X,Y}(\Q_{X,Y}))$ in the fast variables, we have
\begin{equation}\label{uC-small}
    \Big\langle \left(\frac{x}{\sqrt{\epsilon}}, \frac{y}{\sqrt{\epsilon}} \right) \Big\rangle^k \left( \u^{C,1}, \nabla_{X,Y} \u^{C,1} \right) \left(t, \frac{x}{\sqrt{\epsilon}}, \frac{y}{\sqrt{\epsilon}}\right) = O(\epsilon^{\frac{1}{2}}) \quad \text{in } L^\infty(0,T; L^2(\Q)),
\end{equation}
for any $k \in \mathbb{N}$. Similarly, for $l = 0,1$ and $k \in \mathbb{N}$, one has
\begin{equation}\label{uB-small}
    \begin{aligned}
        & \Big\langle \frac{y}{\sqrt{\epsilon}} \Big\rangle^k \dY^l u^{B,1,2} \left(t, x, \frac{y}{\sqrt{\epsilon}}\right) = O(\epsilon^\frac{1}{4}) \quad \text{in } L^2(0,T; H^2_xL^2_y(\Q)),\\
        & \Big\langle \frac{x}{\sqrt{\epsilon}} \Big\rangle^k \dX^l v^{B,1,1} \left(t, \frac{x}{\sqrt{\epsilon}}, y \right) = O(\epsilon^\frac{1}{4}) \quad \text{in } L^2(0,T; L^2_xH^2_y(\Q)),
    \end{aligned}
\end{equation}
and
\begin{equation}\label{uB2-small}
    \begin{aligned}
        & \Big\langle \frac{x}{\sqrt{\epsilon}}\Big\rangle^k \dX^l u^{B,2,1} = O(\epsilon^{\frac{1}{4}}) \quad \text{in } L^\infty(0,T; L^2_x  H^1_y(\Q)),\\
        & \Big\langle \frac{y}{\sqrt{\epsilon}}\Big\rangle^k \dY^l v^{B,2,2} = O(\epsilon^{\frac{1}{4}})\quad \text{in } L^\infty(0,T; H^1_x  L^2_y(\Q)).
    \end{aligned}
\end{equation}

Recalling that ${\u^{E,0}}_{|(0,\delta)^2} \in L^\infty(0,T; W^{2,\infty})$, and ${\u^{E,0}}_{|\Q\backslash (0,\delta)^2} \in L^\infty(0,T; H^4(\Q\backslash (0,\delta)^2))$ from Theorem \ref{thm-Euler} (with $m=4$)and the Sobolev embedding in two dimensions, we have
\begin{equation}\label{uE-1,0}
    \begin{aligned}
        & \left\|\frac{\widehat{\u^{E,0}_{(1)}}}{x^2+y^2}\right\|_{L^\infty_tL^\infty}, \left\|y^{-2}\underline{v^{E,0}_{(1)}}\right\|_{L^\infty_tL^\infty}, \left\|y^{-1}\underline{u^{E,0}_{(0)}}\right\|_{L^\infty_t W^{1,\infty}_x L^\infty_y}, \left\|\frac{\overline{u^{E,0}_{(0)}}}{x}\right\|_{L^\infty_t L^\infty},\\
        & \left\|\frac{\widehat{\dx u^{E,0}_{(0)}}}{\sqrt{x^2+y^2}}\right\|_{L^\infty_t L^\infty}, \left\|\frac{\dy \overline{u^{E,0}_{(0)}}}{x}\right\|_{L^\infty_t L^\infty}\quad \text{are all bounded.}
    \end{aligned}
\end{equation}
These bounds follow from the integral Taylor formula and the fundamental theorem of calculus. For example, one has
\begin{equation*}
    \begin{aligned}
        & \quad \left\|\frac{\widehat{\u^{E,0}_{(1)}}}{x^2+y^2}\right\|_{L^\infty_tL^\infty} = \left\|\frac{\u^{E,0} - \widehat{\u^{E,0}} - x \widehat{\dx \u^{E,0}} - y \widehat{\dy \u^{E,0}}}{x^2+y^2}\right\|_{L^\infty_tL^\infty} \\
        \lesssim & \left\|{\u^{E,0}}_{|(0,\delta)^2}\right\|_{L^\infty_t W^{2,\infty}} + \left\|{\u^{E,0}}_{|\Q\backslash (0,\delta)^2}\right\|_{L^\infty_t L^\infty}  + \left\|\widehat{\nabla_{x,y}\u^{E,0}}\right\|_{L^\infty_t},
    \end{aligned}
\end{equation*}
and similar estimates for other terms given in \eqref{uE-1,0}.

\underline{Estimate of $F_{1,2}^\epsilon$.}

Combining \eqref{uC-small}-\eqref{uB-small} and \eqref{uE-1,0}, we have
\begin{equation}\label{F-1,2}
    \begin{aligned}
        \|\epsilon^{-1} F^\epsilon_{1,2}\|_{L^2_tL^2}  \le & \left\|\frac{\widehat{u^{E,0}_{(1)}}}{x^2+y^2}\right\|_{L^\infty_tL^\infty} \left\|\Big\langle \left(\frac{x}{\sqrt{\epsilon}}, \frac{y}{\sqrt{\epsilon}}\right)\Big\rangle^2 \dX u^{C,1}\right\|_{L^2_t L^2} \\
        & + \left\|y^{-2}\underline{v^{E,0}_{(1)}}\right\|_{L^\infty_tL^\infty} \left\|\Big\langle \frac{y}{\sqrt{\epsilon}}\Big\rangle^2 \dY u^{B,1,2}\right\|_{L^2_t L^2} \\
        & + \left\|\frac{\widehat{v^{E,0}_{(1)}}}{x^2+y^2}\right\|_{L^\infty_tL^\infty} \left\|\Big\langle \left(\frac{x}{\sqrt{\epsilon}}, \frac{y}{\sqrt{\epsilon}}\right)\Big\rangle^2 \dY u^{C,1}\right\|_{L^2_t L^2} \\
        = & O(\epsilon^{\frac{1}{4}}).
    \end{aligned}
\end{equation}

\underline{Estimate of $F_{1,1}^\epsilon$.}

To estimate $F_{1,1}^\epsilon$, first noting that $u^{B,1,2} \in L^2(0,T;H^2_x H^1_Y(\Q_{x,Y}))$ and that $\overline{u^{B,1,2}} \in L^2(0,T; H^3_{Y}(\R_+))$, we have that
\begin{equation}\label{uB-over-0}
    \begin{aligned}
        & \quad \left\|\frac{\overline{u^{B,1,2}_{(0)}}}{x}\right\|_{L^2_tL^\infty_{x,y}} = \left\|\frac{u^{B,1,2} - \overline{u^{B,1,2}}}{x}\right\|_{L^2_tL^\infty_{x,Y}} \\
        \lesssim & \left\|\dx u^{B,1,2} \right\|_{L^2_tL^\infty_{x,Y}(|x|\le 1)} + \left\| u^{B,1,2}\right\|_{L^2_tL^\infty_{x,Y}(|x|\ge 1)} + \left\| \overline{u^{B,1,2}} \right\|_{L^2_t L^\infty_Y(\R_+)} \\
        \lesssim & \left\|u^{B,1,2}\right\|_{L^2_t W^{1,\infty}_x L^\infty_Y} + \left\| \overline{u^{B,1,2}} \right\|_{L^2_t L^\infty_Y(\R_+)} 
    \end{aligned}
\end{equation}
is bounded by using $H^2_xH^1_Y(\Q_{x,Y}) \hookrightarrow W^{1,\infty}_x L^\infty_Y (\Q_{x,Y})$ and $H^3_{Y}(\R_+) \hookrightarrow L^\infty_Y(\R_+)$.
Analogously,
\begin{equation}\label{vB-under-0}
    \left\|y^{-1}\underline{v^{B,1,1}_{(0)}}\right\|_{L^2_tL^\infty_{x,y}} \quad \text{is bounded},
\end{equation}
and
\begin{equation}\label{uB-dY-over-0}
    \begin{aligned}
        & \quad \left\|\frac{\dY\overline{u^{B,1,2}_{(0)}}}{x}\right\|_{L^2_tL^\infty_x L^2_y} = \epsilon^{\frac{1}{4}}\left\|\frac{\dY u^{B,1,2} - \dY \overline{u^{B,1,2}}}{x}\right\|_{L^2_tL^\infty_x L^2_{Y}} \\
        \lesssim & \epsilon^{\frac{1}{4}} \left(\left\|\dY u^{B,1,2}\right\|_{L^2_t W^{1,\infty}_x L^2_Y} + \left\|\dY \overline{u^{B,1,2}}\right\|_{L^2_t L^2_Y}\right)
        = O(\epsilon^{\frac{1}{4}}).
    \end{aligned}
\end{equation}

Combining \eqref{uC-small}-\eqref{uB-small} and \eqref{uE-1,0}, \eqref{uB-over-0}-\eqref{uB-dY-over-0}, and using $H^1(\R_+) \hookrightarrow L^\infty(\R_+)$, one has
\begin{equation}\label{F-1,1}
    \begin{aligned}
        \|\epsilon^{-\frac{1}{2}} F^\epsilon_{1,1}\|_{L^2_t L^2}  
        \le & 
        \left\| y^{-1}\underline{u^{E,0}_{(0)}}\right\|_{L^\infty_t L^\infty} \left\|\Big\langle \frac{y}{\sqrt{\epsilon}} \Big\rangle\dx u^{B,1,2}  \right\|_{L^2_t L^2}
        + \left\| \frac{\dx \underline{u^{E,0}_{(0)}}}{y}\right\|_{L^\infty_t L^\infty} 
        \left\|\Big\langle \frac{y}{\sqrt{\epsilon}} \Big\rangle u^{B,1,2}  \right\|_{L^2_t L^2} \\
        & + \left\|\frac{\overline{u^{B,1,2}_{(0)}}}{x}\right\|_{L^2_tL^\infty} \left\|\Big\langle \frac{x}{\sqrt{\epsilon}} \Big\rangle\dX u^{C,1}  \right\|_{L^\infty_t L^2}
        + \left\|y^{-1}\underline{v^{B,1,1}_{(0)}}\right\|_{L^2_tL^\infty} \left\|\Big\langle \frac{y}{\sqrt{\epsilon}} \Big\rangle\dY u^{C,1}  \right\|_{L^\infty_t L^2}
        \\
        &
        + \left\|\frac{\widehat{\dx u^{E,0}_{(0)}}}{\sqrt{x^2+y^2}}\right\|_{L^\infty_t L^\infty} 
        \left\|\Big\langle \left(\frac{x}{\sqrt{\epsilon}}, \frac{y}{\sqrt{\epsilon}} \right)\Big\rangle u^{C,1}  \right\|_{L^2_t L^2}\\
        & + \left\|\frac{\dy \overline{u^{E,0}_{(0)}}}{x}\right\|_{L^\infty_t L^\infty} \left( \left\|\Big\langle \frac{x}{\sqrt{\epsilon}}\Big\rangle v^{B,1,1}\right\|_{L^2_t L^2} + \left\|\Big\langle \frac{x}{\sqrt{\epsilon}}\Big\rangle v^{C,1}\right\|_{L^2_t L^2} \right) \\
        & + \left\|\frac{\dY\overline{u^{B,1,2}_{(0)}}}{x}\right\|_{L^2_tL^\infty_x L^2_y} 
        \left( \left\|\Big\langle \frac{x}{\sqrt{\epsilon}}\Big\rangle v^{B,1,1}\right\|_{L^\infty_t L^2_x H^1_y} + \left\|\Big\langle \frac{x}{\sqrt{\epsilon}}\Big\rangle v^{C,1}\right\|_{L^\infty_t L^2_x H^1_Y} \right) \\
        & + \left\|y^{-1}\underline{v^{B,1,1}_{(0)}}\right\|_{L^2_tL^\infty} \left\|\Big\langle \frac{y}{\sqrt{\epsilon}}\Big\rangle \dY u^{B,1,2}\right\|_{L^\infty_t L^2} + \left\|\frac{\overline{u^{E,0}_{(0)}}}{x}\right\|_{L^\infty_t L^\infty} \left\|\Big\langle \frac{x}{\sqrt{\epsilon}}\Big\rangle u^{B,2,1}\right\|_{L^2_t L^2} \\
        & + \epsilon^{\frac{1}{2}}\left\|y^{-2}\underline{v^{E,0}_{(1)}}\right\|_{L^\infty_tL^\infty} \left\|\Big\langle\frac{y}{\sqrt{\epsilon}}\Big\rangle^2\dY u^{B,2,2} \right\|_{L^2_tL^2}\\
         = & O(\epsilon^{\frac{1}{4}}).
    \end{aligned}
\end{equation}

\underline{Estimate of 
$F^{\epsilon,\flat}_{1,0}$.}

Recall that $v^{B,2,2} = \int_Y^\infty \dx u^{B,1,2}(t,x,Y')\,\d Y'$. Applying $\dx$ to \eqref{B-1-2} and integrating over $Y' \in (Y,\infty)$, one has
\begin{equation*}
    \begin{aligned}
        \dt v^{B,2,2} = & - \underline{u^{E,0}} \dx v^{B,2,2} + 3 \underline{\dy v^{E,0}} v^{B,2,2} + Y \underline{\dy v^{E,0}} \dx u^{B,1,2} \\
        & + 2 \dx \underline{\dy v^{E,0}} \int_Y^\infty u^{B,1,2} \d Y' - \dY \dx u^{B,1,2}.
    \end{aligned}
\end{equation*}
Hence, $\dt v^{B,2,2} \in L^2(0,T; L^2_{x,Y}(\Q_{x,Y}))$, and similarly, $\dt u^{B,2,1} \in L^2(0,T; L^2_{X,y}(\Q_{X,y}))$. Moreover, in the slow variables, one has
\begin{equation}\label{uB2-dt-small}
    \dt u^{B,2,1} = O(\epsilon^{\frac{1}{4}}) \quad \text{in } L^2(0,T; L^2_{x,y}(\Q)).
\end{equation}

By \eqref{uC-small}, \eqref{uB-small}, and again using $H^1(\R_+) \hookrightarrow L^\infty(\R_+)$, one has
\begin{equation}\label{F-1,0-flat}
    \begin{aligned}
        \|F^{\epsilon,\flat}_{1,0}\|_{L^2_t L^2}  \lesssim & \|\dx u^{B,1,2}\|_{L^2_t H^1_x H^1_Y} \left(\|u^{B,1,2}\|_{L^\infty_t L^2} + \|u^{C,1}\|_{L^\infty_t L^2} + \sqrt{\epsilon} \|u^{E,2}\|_{L^\infty_t L^2}\right)   \\
        & 
        + \|v^{E,2}\|_{L^\infty_t L^2_x H^1_y} \|\dY u^{B,1,2}\|_{L^2_t H^1_x L^2_y} + \sqrt{\epsilon} \|\dx^2 u^{B,1,2}\|_{L^2_tL^2}\\
        & + \sqrt{\epsilon} \|\nabla u^{E,2}\|_{L^\infty_tL^2}\left(\|u^{B,1,2}\|_{L^2_tH^1_xH^1_Y}  + \|v^{B,1,1}\|_{L^2_t H^1_XH^1_y} \right) \\
        & + \|\partial_t u^{B,2,1} \|_{L^2_tL^2}  + \left(\|u^{B,1,2}\|_{L^\infty_t H^1_x L^2_y} + \|u^{C,1}\|_{L^\infty_t H^1_X L^2_y}\right)  \|\partial_X u^{B,2,1}\|_{L^2_t L^2_x H^1_y} \\
        & + \sqrt{\epsilon} \|u^{E,2}\|_{L^\infty_t H^1_x L^2_y}  \|\partial_X u^{B,2,1}\|_{L^2_t L^2_x H^1_y} + \|v^{E,0}\|_{L^\infty_t L^2_x H^1_y} \|\dy u^{B,2,1}\|_{L^2_t H^1_X L^2_y} \\
    & + \sqrt{\epsilon}\left(\|v^{B,1,1}\|_{L^\infty_t L^2_x H^1_y} + \|v^{C,1}\|_{L^\infty_t L^2_x H^1_Y} + \sqrt{\epsilon} \|v^{E,2}\|_{L^\infty_t L^2_x H^1_y} \right) \|\dy u^{B,2,1}\|_{L^2_t H^1_X L^2_y} \\
    & + \left(\|u^{B,2,1}\|_{L^2_t L^2} + \|v^{B,2,2}\|_{L^2_t L^2}\right) \|\nabla_{x,y} u^{E,0}\|_{L^\infty_t H^2} \\
    & \begin{aligned}
        + \|u^{B,2,1}\|_{L^2_t H^1_X H^1_y} &\Big(\sqrt{\epsilon} \|\dx u^{B,1,2}\|_{L^\infty_t(L^2)} + \|\dX u^{C,1}\|_{L^\infty_t(L^2)}\\
    & + \sqrt{\epsilon} \|\dX u^{B,2,1}\|_{L^\infty_t L^2}  + \epsilon \|\dx u^{E,2}\|_{L^\infty_t(L^2)} \Big)
    \end{aligned}\\
    & \begin{aligned}
        + \|v^{B,2,2}\|_{L^2_t H^1_x H^1_Y} & \Big( \|\dY u^{B,1,2}\|_{L^\infty_t(L^2)} + \|\dY u^{C,1}\|_{L^\infty_t(L^2)} \\
    & + \epsilon  \|\dy u^{B,2,1}\|_{L^\infty_t L^2} + \epsilon \|\dy u^{E,2}\|_{L^\infty_t(L^2)} \Big)
    \end{aligned}\\
     = & O(\epsilon^{\frac{1}{4}}),
    \end{aligned}
\end{equation}
Similarly, $\|F^{\epsilon,\flat}_{2,0}\|_{L^2_t L^2} = O(\epsilon^{\frac{1}{4}})$.

\underline{Estimate of 
$F^{\epsilon,\sharp}_{1,0}$.}

Since we only have $u^{B,2,1} \in L^\infty(0,T; H^1_XH^1_y(\Q_{X,y}))$, $v^{B,2,2} \in L^\infty(0,T; H^1_xH^1_Y(\Q_{x,Y}))$ from Proposition \ref{prop-Prandtl}, and $\u^{E,2} \in L^\infty(0,T;H^1(\Q))$ from Proposition \ref{prop-E,2}, $F^{\epsilon,\sharp}_{1,0}$ only belongs to $L^\infty(0,T; H^{-1}(\Q))$. Since $\nabla\cdot \u^{E,2} = 0$, for any $\phi \in H^1_0(\Q)$, one has
\begin{equation}\label{F-phi}
    \begin{aligned}
        \inp*{F^{\epsilon,\sharp}_{1,0}}{\phi}  = & - \epsilon \inp*{\u^{E,2}\cdot \nabla\phi}{u^{E,2}} - \sqrt{\epsilon} \inp*{\u^{E,2}\cdot \nabla \phi}{u^{C,1}(t,\frac{x}{\sqrt{\epsilon}},\frac{y}{\sqrt{\epsilon}})} \\
        & + \epsilon \left(\inp*{\dx \u^{E,2}}{\dx \phi}_{L^2(\Q)} + \inp*{\dy \u^{E,2}}{\dy \phi}_{L^2(\Q)} \right) \\
        & + \sqrt{\epsilon} \inp*{\dX u^{B,2,1}}{\dx \phi} + \epsilon \inp*{\dy u^{B,2,1}}{\dy \phi} \\
        & + \epsilon \inp*{\dx u^{B,2,2}}{\dx \phi},
    \end{aligned}
\end{equation}
which implies
\begin{equation}\label{F-1,0-sharp}
    \begin{aligned}
        \left\| F^{\epsilon,\sharp}_{1,0} \right\|_{L^\infty(0,T; H^{-1}(\Q))}  \lesssim & 
        \epsilon \left(1+\|\u^{E,2}\|_{L^\infty_t(H^1(\Q))} \right)^2\\
        & + \sqrt{\epsilon} \|\dX u^{B,2,1}\|_{L^\infty_t (L^2(\Q))} + \epsilon \|\dy u^{B,2,1}\|_{L^\infty_t (L^2(\Q))}\\
        & + \sqrt{\epsilon} \|\u^{E,2}\|_{L^\infty_t (L^\infty_x L^2_y(\Q))} \|u^{C,1}(t,\frac{x}{\sqrt{\epsilon}}, Y)\|_{L^\infty_t (L^2_x L^\infty_Y)} + \epsilon \|\dx u^{B,2,2}\|_{L^\infty_t(L^2(\Q))}\\
         = & O(\epsilon) + O(\sqrt{\epsilon}) \cdot \left(\|\dX u^{B,2,1}\|_{L^\infty_t (L^2(\Q))} + \|u^{C,1}\|_{L^\infty_t (L^2_x H^1_Y)}\right) \\
         = & O(\epsilon^{\frac{3}{4}}).
    \end{aligned}
\end{equation}

\subsection{Proof of Theorem \ref{thm-1}}

For simplicity of notation, denote by $\text{Eqn}(R^\epsilon)$ the Navier-Stokes operator acting on $R^\epsilon$. Then one can write
\begin{equation}\label{energy}
    \begin{aligned}
        \epsilon^2 \inp*{\text{Eqn}(R^\epsilon)}{R^\epsilon}_{\Q} = & \inp*{\text{Eqn}(\u^\epsilon)}{\u^\epsilon}_{\Q} - \inp*{\text{Eqn}(\u^\epsilon)}{\u^a}_{\Q} \\
        & - \inp*{\text{Eqn}(\u^a)}{\u^\epsilon}_{\Q} + \inp*{\text{Eqn}(\u^a)}{\u^a}_{\Q}.
    \end{aligned}
\end{equation}
Recalling that $(I-\mathbb{P})R^\epsilon = (I-\mathbb{P}) \u^\epsilon = (I-\mathbb{P}) \u^a = 0$, and that
\begin{equation*}
    \begin{aligned}
        & \u^\epsilon \in L^2(0,T; H^1(\Q)), \text{Eqn}(\u^\epsilon) \in L^2(0,T; H^{-1}(\Q)),\\ 
        & \u^a \in L^\infty(0,T; H^1(\Q)), \text{Eqn}(\u^a) \in L^\infty(0,T; H^{-1}(\Q)),
    \end{aligned}
\end{equation*}
all terms on the right-hand side of \eqref{energy} make sense for a.e. $t\in (0,T)$ except for $\inp*{\text{Eqn}(\u^\epsilon)}{\u^\epsilon}_{\Q}$.
Since the equation of $R^\epsilon$ given in \eqref{remainder} is deduced by writing an equation for $u^a$ and subtracting it from the equation for $\u^\epsilon$, the energy inequality \eqref{NS-ineq} verified by $\u^\epsilon$ is equivalent to saying that the right-hand side of \eqref{energy} is $\le 0$. Thus, we start the energy estimate for $R^\epsilon$ by writing
\begin{equation}\label{A}
    \frac{1}{2}\|R^\epsilon(t)\|^2_{L^2(\Q)} - \epsilon \int_0^t \inp*{\Delta R^\epsilon}{R^\epsilon}_{\Q} \le -\int_0^t \inp*{( R^\epsilon \cdot \nabla) \u^a}{R^\epsilon}_{\Q} - \int_0^t \inp*{F^\epsilon}{R^\epsilon}_{\Q},
\end{equation}
in which the  Laplacian term is estimated as
\begin{equation}\label{lap}
    \begin{aligned}
        - \epsilon \int_0^t \inp*{\Delta R^\epsilon}{R^\epsilon}_{\Q} & = \epsilon \int_0^t \|\nabla R^\epsilon(\tau)\|^2_{L^2(\Q)} + \epsilon S
    \end{aligned}
\end{equation}
where
\begin{equation*}
    \begin{aligned}
        S = & \int_0^t \inp*{2\alpha R^\epsilon_2 + \overline{b^\epsilon_1}}{R^\epsilon_2(\tau)}_{|x=0}
        + \int_0^t \inp*{2\alpha R^\epsilon_1 + \underline{b^\epsilon_2}}{R^\epsilon_1(\tau)}_{|y=0} \\
        = & - 2\alpha \int_0^t \int_{\Q}\nabla \cdot \left(|R^\epsilon_2|^2, |R^\epsilon_1|^2\right)(\tau) 
        - \int_0^t \int_{\Q} \nabla \cdot \left(b^\epsilon_1 R^\epsilon_2, b^\epsilon_2 R^\epsilon_1 \right)(\tau),
    \end{aligned}
\end{equation*}
which implies 
\begin{equation*}
    \begin{aligned}
        \epsilon |S| \le & \frac{\epsilon}{10}\int_0^t \|R^\epsilon(\tau)\|^2_{H^1(\Q)} + C \left( \int_0^t \|R^\epsilon(\tau)\|^2_{L^2(\Q)} + \epsilon\|b^\epsilon(\tau)\|^2_{L^2(\Q)} + \epsilon^2\|\nabla \cdot b^\epsilon(\tau)\|^2_{L^2(\Q)}\right).
    \end{aligned}
\end{equation*}
Recalling from \eqref{b} that 
\begin{equation*}
    \begin{aligned}
        \epsilon\|b^\epsilon(\tau)\|^2_{L^2(\Q)}  \le & 4|\alpha|^2 \left(\|v^{B,1,1}(t,\frac{x}{\sqrt{\epsilon}},y)\|^2_{L^2_{x,y}(\Q)} + \|v^{C,1}(t,\frac{x}{\sqrt{\epsilon}},\frac{y}{\sqrt{\epsilon}})\|^2_{L^2_{x,y}(\Q)}\right) \\
        & + 4|\alpha|^2 \epsilon \|v^{B,2,2}(t,x,\frac{y}{\sqrt{\epsilon}})\|^2_{L^2_{x,y}(\Q)} + \epsilon \|\dx v^{B,2,2}(t,x,\frac{y}{\sqrt{\epsilon}})\|^2_{L^2_{x,y}(\Q)} \\
        = & O(\epsilon^{\frac{1}{2}}),
    \end{aligned}
\end{equation*}
and
\begin{equation*}
    \begin{aligned}
        \epsilon^2\|\dx b^\epsilon_1(\tau)\|^2_{L^2(\Q)}  \le & 4|\alpha|^2 \left(\|\dX v^{B,1,1}(t,\frac{x}{\sqrt{\epsilon}},y)\|^2_{L^2_{x,y}(\Q)} + \|\dX v^{C,1}(t,\frac{x}{\sqrt{\epsilon}},\frac{y}{\sqrt{\epsilon}})\|^2_{L^2_{x,y}(\Q)}\right) \\
        & + 4|\alpha|^2 \epsilon^2 \|\dx v^{B,2,2}(t,x,\frac{y}{\sqrt{\epsilon}})\|^2_{L^2_{x,y}(\Q)} + \epsilon^2 \|\dx^2 v^{B,2,2}(t,x,\frac{y}{\sqrt{\epsilon}})\|^2_{L^2_{x,y}(\Q)} \\
         = & O(\epsilon^{\frac{1}{2}}).
    \end{aligned}
\end{equation*}
Thus, one gets
\begin{equation}\label{S}
    \epsilon |S| \le \frac{\epsilon}{10}\int_0^t \|R^\epsilon(\tau)\|^2_{H^1(\Q)} + C \int_0^t \|R^\epsilon(\tau)\|^2_{L^2(\Q)} + C\epsilon^{\frac{1}{2}}.
\end{equation}
By formally writing 
$$\u^a = \u^{E,0} + \sqrt{\epsilon} \left(u^{B,1,2} , v^{B,1,1} \right)^T + \Tilde{\u}^a,$$
we estimate the linear term $\int_0^t \inp*{( R^\epsilon \cdot \nabla) \u^a}{R^\epsilon}_{\Q}$:
\begin{equation*}
    \begin{aligned}
        & \left|\int_0^t \inp*{( R^\epsilon \cdot \nabla) \u^a}{R^\epsilon}_{\Q}\right| 
        \lesssim \left(1+\|\nabla \u^{E,0}\|_{L^\infty_tL^\infty_{x,y}}\right) \int_0^t \|R^\epsilon(\tau)\|^2_{L^2(\Q)} \\
        & + \|\nabla \tilde{\u}^a\|_{L^\infty_t L^2_{x,y}} \int_0^t \|R^\epsilon(\tau)\|\|\nabla R^\epsilon(\tau)\| \d \tau \\
        & + \sqrt{\epsilon} \left(\|\dx u^{B,1,2}\|_{L^\infty_t H^1_x L^2_y} + \|\dy v^{B,1,1}\|_{L^\infty_t L^2_x H^1_y} \right)\int_0^t \|R^\epsilon(\tau)\|_{H^1_{x,y}} \|R^\epsilon(\tau)\|_{L^2_{x,y}} \d \tau \\
        & + \left(\|\dY u^{B,1,2}, \dX v^{B,1,1}\|_{L^\infty_t L^\infty_{x,y}}\right) \int_0^t \|R^\epsilon(\tau)\|^2_{L^2_{x,y}} \d \tau .
    \end{aligned}
\end{equation*}
Since $\|\nabla \u^{E,0}\|_{L^\infty_tL^\infty_{x,y}} \lesssim \|\u^{E,0}\|_{L^\infty_t H^3_{x,y}}$,
$$
\sqrt{\epsilon} \left(\|\dx u^{B,1,2}\|_{L^\infty_t H^1_x L^2_y} + \|\dy v^{B,1,1}\|_{L^\infty_t L^2_x H^1_y} \right) = O(\sqrt{\epsilon}),
$$
and
\begin{equation*}
    \begin{aligned}
        \|\nabla_{x,y} \Tilde{\u}^a\|^2_{L^\infty_t L^2_{x,y}} \le & \epsilon \cdot \epsilon^{-1} \|\nabla_{X,Y}\u^{C,1}\|^2_{L^\infty_t L^2_{x,y}}  + \epsilon^2 \|\nabla_{x,y}\u^{E,2}\|^2_{L^\infty_t L^2_{x,y}} \\
        & + \epsilon^2 \cdot\left( \epsilon^{-\frac{1}{2}} \|\left(\dX u^{B,2,1}, \dY v^{B,2,2}\right)\|^2_{L^\infty_t L^2_{x,y}} + \|\left(\dy u^{B,2,1}, \dx v^{B,2,2}\right)\|^2_{L^\infty_t L^2_{x,y}}\right)  \\
        = & O(\epsilon).
    \end{aligned}
\end{equation*}
Thus, by recalling \eqref{pro4.3-Linfty} and using Cauchy's inequality, one has
\begin{equation}\label{li-R}
    \left|\int_0^t \inp*{( R^\epsilon \cdot \nabla) \u^a}{R^\epsilon}_{\Q}\right| \le C\int_0^t \|R^\epsilon (\tau)\|^2_{L^2(\Q)} + \frac{\epsilon}{10}  \int_0^t \|\nabla R^\epsilon (\tau)\|^2_{L^2(\Q)}.
\end{equation}

By \eqref{F-1,2}-\eqref{F-1,1}, \eqref{F-1,0-flat}-\eqref{F-1,0-sharp}, one has
\begin{equation}\label{source}
    \begin{aligned}
        \left|\int_0^t \inp*{F^\epsilon}{R^\epsilon}_{\Q}\right| \le & \int_0^t \sum_{i=1}^2\|\left(F^{\epsilon,\flat}_{i,0} + \epsilon^{-\frac{1}{2}} F^\epsilon_{i,1} + \epsilon^{-1} F^\epsilon_{i,2}\right)(\tau)\|^2_{L^2(\Q)} \\
        & + \int_0^t \|R^\epsilon(\tau)\|^2_{L^2(\Q)} + \|F^{\epsilon,\sharp}_{1,0}\|_{L^\infty(0,T; H^{-1}(\Q))} \int_0^t\|\nabla_{x,y} R^\epsilon(\tau)\|_{L^2(\Q)} \\
        \le & C \epsilon^{\frac{1}{2}} + \int_0^t \|R^\epsilon(\tau)\|^2_{L^2(\Q)} + C \epsilon^{\frac{3}{4}} \int_0^t\|\nabla_{x,y} R^\epsilon(\tau)\|_{L^2(\Q)} \\
        \le & C \epsilon^{\frac{1}{2}} + \int_0^t \|R^\epsilon(\tau)\|^2_{L^2(\Q)} + \frac{\epsilon}{10} \int_0^t\|\nabla_{x,y} R^\epsilon(\tau)\|^2_{L^2(\Q)}.
    \end{aligned}
\end{equation}

Therefore, combining \eqref{A}-\eqref{source}, one obtains
\begin{equation*}
    \|R^\epsilon(t)\|^2_{L^2(\Q)} + \epsilon \int_0^t \|\nabla R^\epsilon(\tau)\|^2_{L^2(\Q)} \le C \epsilon^{\frac{1}{2}} + C \int_0^t \|R^\epsilon(\tau)\|^2_{L^2(\Q)},
\end{equation*}
which implies by using Gronwall's inequality that
\begin{equation*}
    \|R^\epsilon \|_{L^\infty(0,T; L^2(\Q))} = O(\epsilon^{\frac{1}{4}}),\quad \|\nabla R^\epsilon\|_{L^2(0,T; L^2(\Q))} = O(\epsilon^{-\frac{1}{4}}).
\end{equation*}
This completes the proof of Theorem \ref{thm-1}.

\section{Appendix: Problems of Laplace equation in a quarter plane}
\renewcommand{\thelemma}{A.\arabic{lemma}}
\setcounter{lemma}{0}
\renewcommand{\thedefn}{A.\arabic{defn}}
\setcounter{defn}{0}


In the appendix, we shall give some estimates of solutions to elliptic equations defined in a quarter plane, which were used in previous sections. For boundary value problems of Laplacian equations in domains with corners, there are two main conclusions. The first is due to Kozlov-Mazya-Rossman \cite{kozlov1997elliptic}, who established the well-posedness and regularity in weighted Sobolev spaces using the blow-up techniques. His results apply to infinite angles (in 2D) and cones (in higher dimensions), as well as to bounded domains with corners. The second is due to Grisvard \cite{grisvard1985elliptic}, who established the well-posedness of elliptic equations in bounded domains within the framework of classical Sobolev spaces. Both of these works provide an asymptotic expansion of the solution to the elliptic equation. In fact, from the expansion of solutions on angles/cones in Kozlov et alw.'s expansion, it can be seen that for an angular domain with a general angle, the well-posedness of boundary value problems of Laplacian problems is only established in the weighted spaces:

Consider the following boundary value problem in
$\Q=\{x>0, y>0\}$,
\begin{equation}\label{LN}
   \begin{cases}
 \Delta u = 0, \quad \text{in } \Q, \\
        (\dx u)_{|x=0} = g_1(y), \quad (\dy u)_{|y=0} = g_2(x).      
   \end{cases}     
\end{equation}
By the method of reflection, first we have the following result for \eqref{LN}:
\begin{lemma}\label{lemma-LN-refl}
    Suppose that $g_1, g_2 \in H^{l-\frac{1}{2}}(\R_+)$ for $l=0,1$, both of $G_1(x) := \int_{0}^x g_1(s) \d s$ and $ G_2(x) := \int_{0}^x g_2(s) \d s$ belong to $H^1(\R_+)$,
    and $\int_{\R_+} g_1 = \int_{\R_+} g_2 = 0 $. Then the  problem \eqref{LN} admits a solution $u \in H^{l+1}(\Q)$, and it is unique in the sense of modulus constants. Moreover, there exists a constant $C$, such that
    \begin{equation*}
        \|\nabla u\|_{H^{l}(\Q)} \le C\begin{cases}
            \sum_{j=1,2}\left(\|g_j\|_{H^{\frac{1}{2}}(\R_+)} + \left\|G_j \right\|_{L^2(\R_+)} \right) \quad \text{if } l =1,\\
            \sum_{j=1,2} \left\|G_j \right\|_{H^1(\R_+)}  \quad \text{if } l=0.
            
        \end{cases}
    \end{equation*}
\end{lemma}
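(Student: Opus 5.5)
The approach is the method of reflection named in the statement: even extension in both variables turns \eqref{LN} into a Neumann problem in a half-plane, and the mean-zero hypotheses are what make the half-plane solution well behaved at infinity. Since the Neumann data on the two edges are independent, I split $u=u_1+u_2$, where $u_1$ solves \eqref{LN} with data $(g_1,0)$ and $u_2$ with data $(0,g_2)$; by the symmetry $x\leftrightarrow y$ it suffices to treat $u_1$. The condition $(\dy u_1)_{|y=0}=0$ is preserved by reflecting evenly across $\{y=0\}$. So I extend $g_1$ evenly to $\tilde g_1(y)=g_1(|y|)$ on $\R$ and look for $u_1$ as the restriction to $\Q$ of the solution of the half-plane problem
\begin{equation*}
\Delta w=0 \ \text{ in } \{x>0,\ y\in\R\},\qquad (\dx w)_{|x=0}=\tilde g_1 .
\end{equation*}
Because $\tilde g_1$ is even, uniqueness of this problem modulo constants gives that $w$ is even in $y$, so $(\dy w)_{|y=0}=0$ and $u_1:=w_{|\Q}$ solves the required problem.

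The half-plane problem is solved explicitly by Fourier transform in $y$. Writing $\hat w(x,\xi)=-\hat{\tilde g}_1(\xi)\,e^{-x|\xi|}/|\xi|$, one gets $\widehat{\dx w}=\hat{\tilde g}_1 e^{-x|\xi|}$ and $\widehat{\dy w}=-i\,\sgn(\xi)\,\hat{\tilde g}_1 e^{-x|\xi|}$. Integrating $e^{-2x|\xi|}$ in $x$ gives the identity
\begin{equation*}
\|\nabla w\|^2_{L^2}\simeq\int_{\R}\frac{|\hat{\tilde g}_1(\xi)|^2}{|\xi|}\,\d\xi=\|\tilde g_1\|^2_{\dot H^{-1/2}},
\end{equation*}
and similarly $\|\nabla^2 w\|^2_{L^2}\simeq\|\tilde g_1\|^2_{\dot H^{1/2}}$.

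The key remaining point is to control $\|\tilde g_1\|_{\dot H^{-1/2}}$, which is only finite if the frequency $0$ is harmless. This is exactly where $\int_{\R_+}g_1=0$ enters. With $\tilde G_1(y):=\int_0^y\tilde g_1$, which is the odd extension of $G_1$, we have $\tilde g_1=\tilde G_1'$, and $\tilde G_1\in H^1(\R)$ because $G_1(0)=0$ and $G_1\in H^1(\R_+)$; the mean-zero condition is what makes $G_1$ decay so that this membership is possible. Hence $\hat{\tilde g}_1=i\xi\,\hat{\tilde G}_1$, and
\begin{equation*}
\int\frac{|\hat{\tilde g}_1|^2}{|\xi|}\,\d\xi=\int|\xi|\,|\hat{\tilde G}_1|^2\,\d\xi\le\|\tilde G_1\|^2_{H^{1/2}}\lesssim\|G_1\|^2_{H^1(\R_+)} .
\end{equation*}
This settles $l=0$.

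For $l=1$ I add the $\dot H^{1/2}$ bound, using $\|\tilde g_1\|_{\dot H^{1/2}}\lesssim\|g_1\|_{H^{1/2}(\R_+)}$ (even extension is bounded on $H^{1/2}$). For the $L^2$ part of $\nabla u$ I interpolate by splitting into $|\xi|\le1$ and $|\xi|\ge1$: the low frequencies are bounded by $\int|\xi|\,|\hat{\tilde G}_1|^2\lesssim\|G_1\|_{L^2}^2$ on $|\xi|\le1$, and the high frequencies by $\|g_1\|^2_{H^{-1/2}}\lesssim\|g_1\|^2_{H^{1/2}}$. This gives the stated $l=1$ bound, $\|g_1\|_{H^{1/2}}+\|G_1\|_{L^2}$.

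Uniqueness modulo constants follows by the same reflection. A solution with zero data and $\nabla u\in L^2$ reflects evenly across both axes to a harmonic function on $\R^2$ with $\nabla u\in L^2(\R^2)$, hence $\nabla u$ is a harmonic $L^2$ function on $\R^2$, so $\nabla u\equiv0$ and $u$ is constant.

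The main obstacle is the low-frequency control, that is, justifying $\tilde G_1\in H^1(\R)$ and using it to make sense of $\dot H^{-1/2}$. A further point is that $u$ itself is determined only up to a constant and lies in $H^{l+1}$ only modulo constants, so the statement should be read as a bound on $\nabla u$, which is exactly what the estimate controls.
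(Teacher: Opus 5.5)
Your proposal is correct and follows essentially the same route as the paper: even extension of $g_j$, the splitting $u=(u_1)_{|y>0}+(u_2)_{|x>0}$ into half-plane Neumann problems, Fourier transform in the tangential variable with low frequencies controlled through the odd extension $\tilde G_1$ of $G_1$, and uniqueness by reflection plus Liouville. The differences are only in bookkeeping. You write $\hat{\tilde g}_1=i\xi\hat{\tilde G}_1$ and split at $|\xi|=1$, where the paper uses $|\xi|^{-1}\le 1+|\xi|^{-2}$ and $\int|\xi|^{-2}|\hat{\tilde g}_1|^2=\|\tilde G_1\|_{L^2}^2$. Like the paper, you in fact only control $\nabla u$.
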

\begin{proof}
    Let $\tilde{g}_1, \tilde{g}_2$ be the even extensions of $g_1, g_2$ on $\R$ respectively. Then $\int_{\R} \tilde{g}_1 = \int_{\R} \tilde{g}_2 = 0$. If $l=1$, for $j=1,2$, one has
    \begin{equation*}
        \begin{aligned}
            [\tilde{g}_j]_{H^{\frac{1}{2}}(\R)} = & \left(\int_{\R}\int_{\R} \frac{|\tilde{g}_j(x_1)-\tilde{g}_j(x_2)|^2}{|x_1-x_2|^2} \d x_1 \d x_2\right)^{\frac{1}{2}} \\
            \lesssim & [g_j]_{H^{\frac{1}{2}}(\R_+)} + \left(\int_{\R_+}\int_{\R_+} \frac{|\tilde{g}_j(x_1)-\tilde{g}_j(x_2)|^2}{|x_1+x_2|^2} \d x_1 \d x_2\right)^{\frac{1}{2}}  \\
            \lesssim & [g_j]_{H^{\frac{1}{2}}(\R_+)} + \left(\int_{\R_+}\int_{\R_+} \frac{|\tilde{g}_j(x_1)-\tilde{g}_j(x_2)|^2}{|x_1-x_2|^2} \d x_1 \d x_2\right)^{\frac{1}{2}}  \lesssim  [g_j]_{H^{\frac{1}{2}}(\R_+)},
        \end{aligned}
    \end{equation*}
   which implies
    \begin{equation*}
        \|\tilde{g}_j\|_{H^{\frac{1}{2}}(\R)} \lesssim \|g_j\|_{H^{\frac{1}{2}}(\R_+)} \qquad (j=1,2).
    \end{equation*}
  
Let $u_1, u_2$ be  solutions to the following two problems
    \begin{equation}\label{uu}
        \begin{cases}
        \Delta u_1 = 0, \quad x>0, y\in \R,\\
        (\dx u_1)_{|x=0} = \tilde{g}_1(y),
        \end{cases}\quad
        \begin{cases}
        \Delta u_2 = 0, \quad y>0, x\in \R,\\
        (\dy u_2)_{|y=0} = \tilde{g}_2(x),
        \end{cases}
    \end{equation}
    respectively. It is easy to know that $u_1(x,y)$ ($u_2(x,y)$ resp.) is even in $y$ ($x$ resp.). Thus, $u = (u_1)_{|y>0} + (u_2)_{|x>0}$ satisfies the problem \eqref{LN}.

    Then by using the Fourier transform $\mathcal{F}_{y\to \xi}$ in the above problem of $u_1$, we obtain that
    \begin{equation*}
       \begin{aligned}
            \|\nabla u_1\|_{H^l(\R^2_+)} \approx & \left(\int_{\R} (1+|\xi|^{2l}) \frac{|\mathcal{F}_{y \to \xi} [\tilde{g}_1](\xi)|^2}{|\xi|} \,\d \xi\right)^{\frac{1}{2}} \\
            \lesssim & \left(\int_{\R} (1+|\xi|^{-2}+|\xi|^{2l-1})|\mathcal{F}_{y \to \xi} [\tilde{g}_1](\xi)|^2 \,\d \xi\right)^{\frac{1}{2}}.
       \end{aligned}
    \end{equation*}
   On the other hand, one has
    \begin{equation*}
        \begin{aligned}
            \int_{\R} |\xi|^{-2}|\mathcal{F}_{y \to \xi} [\tilde{g}_1](\xi)|^2 \,\d \xi = \int_{\R} |\mathcal{F}_{y \to \xi} [\tilde{G}_1](\xi)|^2 \, \d \xi = \|\tilde{G}_1\|^2_{L^2(\R)},
        \end{aligned}
    \end{equation*}
    with $\tilde{G}_1 = \int_0^y \tilde{g}_1(s) \d s$ being the odd extension of $G_1$, and 
    \begin{equation*}
        \|\tilde{G}_1\|_{L^2(\R)} \lesssim \|G_1\|_{L^2(\R_+)}.
    \end{equation*}
Thus, we get the estimates.

Analogously, one can deduce a similar estimate for the solution $u_2$ of the problem \eqref{uu}.

The uniqueness of solution to the problem \eqref{LN} follows from the reflection extension and Liouville’s theorem.

\end{proof}

Similarly, for the  problem with the Dirichlet boundary conditions:
\begin{equation}\label{LD}
    \Delta u = 0, \quad \text{in }\Q, \quad u_{|x=0} = g_1(y), \quad u_{|y=0} = g_2(x),
\end{equation}
we have the following result:
\begin{lemma}\label{lemma-LD-refl}
    Suppose that $g_1, g_2 \in H^{l}\cap \dot{H}^{-\frac{1}{2}}(\R_+)$ for $l=0, 1,2$, and when $l\ge 1$, we also assume $g_1(0) = g_2(0) = 0$. Then the Laplacian-Dirichlet problem \eqref{LD} admits a unique solution $u \in H^{l}(\Q)$, and there exists a constant $C$, such that
    \begin{equation*}
        \|u\|_{H^{l}(\Q)} \le C\sum_{j=1,2}\left(\|g_j\|_{H^{l}(\R_+)} + \|g_j\|_{\dot{H}^{-\frac{1}{2}}(\R_+)}\right).
    \end{equation*}
    When $l=0$, the term $\|g_j\|_{H^{l}(\R_+)}$ on the above right-hand side could be omitted.
\end{lemma}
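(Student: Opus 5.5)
The plan is to copy the proof of Lemma \ref{lemma-LN-refl}: reduce \eqref{LD} to two half-plane Dirichlet problems by reflection, and estimate those with the Fourier transform. The parity has to change, though. Odd extensions of the data do not keep the other edge's boundary value zero, so the decomposition must be set up so that each half-plane solution vanishes on the opposite edge.

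\textbf{Step 1: Reflection.} Let $\tilde g_1$ be the \emph{odd} extension of $g_1$ to $\R$ in $y$. Let $u_1$ solve $\Delta u_1=0$ in $\{x>0\}$ with $u_1|_{x=0}=\tilde g_1$. Then $u_1$ is odd in $y$, so $u_1|_{y=0}=0$. Define $\tilde g_2$ and $u_2$ symmetrically in $\{y>0\}$, so that $u_2|_{x=0}=0$. Then $u=u_1|_{y>0}+u_2|_{x>0}$ solves \eqref{LD}.

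For $l\ge1$ the hypothesis $g_j(0)=0$ is exactly what makes $\tilde g_j\in H^l(\R)$ with $\|\tilde g_j\|_{H^l(\R)}\lesssim\|g_j\|_{H^l(\R_+)}$. This is immediate for $l=1$. For $l=2$ the odd extension has matching first derivatives at $0$, so no extra condition is needed.

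For the homogeneous negative norm, I would define $\dot H^{-\frac12}(\R_+)$ compatibly with odd extension, for instance as the norm of $\tilde g_j$ in $\dot H^{-\frac12}(\R)$, or by duality with $\dot H^{\frac12}_{00}$. I would then check that $\|\tilde g_j\|_{\dot H^{-\frac12}(\R)}\lesssim\|g_j\|_{\dot H^{-\frac12}(\R_+)}$.

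\textbf{Step 2: Half-plane estimate.} Write $\widehat{u_1}(x,\xi)=e^{-x|\xi|}\mathcal F_{y\to\xi}[\tilde g_1](\xi)$. Then
$$\|\partial_x^a\partial_y^b u_1\|^2_{L^2(\R^2_+)}=\int_\R \frac{|\xi|^{2(a+b)}}{2|\xi|}\,|\mathcal F[\tilde g_1]|^2\,\d\xi .$$
For $a+b=0$ this is exactly $\tfrac12\|\tilde g_1\|^2_{\dot H^{-\frac12}}$. For $1\le a+b\le l$, the weight $|\xi|^{2(a+b)-1}$ is bounded by $|\xi|^{-1}+|\xi|^{2l}$, so the integral is controlled by $\|\tilde g_1\|^2_{\dot H^{-\frac12}}+\|\tilde g_1\|^2_{H^l}$. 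This gives the stated bound. For $l=0$ only the $\dot H^{-\frac12}$ term appears, which explains why $\|g_j\|_{H^l}$ can be dropped in that case. The same argument applies to $u_2$, and restricting to $\Q$ only decreases the norms.

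\textbf{Step 3: Uniqueness.} Let $w$ be the difference of two solutions. Then $w\in L^2(\Q)$ is harmonic with zero Dirichlet data on both edges. Extending $w$ oddly across both axes gives an $L^2$ harmonic function on $\R^2$ (by the reflection principle), which must vanish, e.g.\ by the mean value property on large discs.

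\textbf{Main obstacle.} The delicate point is the endpoint $l=0$. There the traces are only in $\dot H^{-\frac12}$, so the problem must be understood in a weak (transposition) sense, with the reflection still valid. I would handle it by density: prove the estimate for smooth compactly supported $g_j$ with $g_j(0)=0$, then extend using the uniform $L^2$ bound of Step 2. The Poisson-kernel formula makes the limit solve \eqref{LD} in the distributional sense.
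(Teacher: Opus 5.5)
Your proposal is correct and is essentially the paper's proof: odd extensions $\tilde g_j$ (using $g_j(0)=0$ for $l\ge 1$), the splitting $u=(u_1)_{|y>0}+(u_2)_{|x>0}$ into two half-plane Dirichlet problems, and the Fourier bound via $\int_{\R}(1+|\xi|^{2l})\,|\mathcal F[\tilde g_1]|^2\,|\xi|^{-1}\,\d\xi$. For the $\dot H^{-\frac12}$ part the paper uses the $\dot H^{\frac12}$ seminorm of $\int_0^x\tilde g_1$, the even extension of $\int_0^x g_1$; this is equivalent to your odd-extension or duality definition. Your uniqueness and density remarks only add detail the paper leaves implicit. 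One wording slip: your opening sentence should say that \emph{even} extensions (as in Lemma \ref{lemma-LN-refl}) fail to vanish on the opposite edge, since your Step 1 correctly uses odd ones.
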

\begin{proof}
    Let $\tilde{g}_1, \tilde{g}_2$ be the odd extensions of $g_1,g_2$ on $\R$ respectively. Obviously, one has
    $$
    \|\tilde{g}_1\|_{H^l(\R)} \lesssim \|g_1\|_{H^l(\R_+)}, \quad \|\tilde{g}_2\|_{H^l(\R)} \lesssim \|g_2\|_{H^l(\R_+)}, \quad \forall l=0,1,2,
    $$
    by noting $g_1(0) = g_2(0) = 0$ as $l=1,2$.

    It is easy to know that the solution $u$ of \eqref{LD} can be decomposed as $u = (u_1)_{|y>0} + (u_2)_{|x>0}$, where $u_1$ and $u_2$ are the solutions to the following problems respectively,
    \begin{equation}\label{uu-}
        \begin{cases}
        \Delta u_1 = 0, \quad x>0, y\in \R,\\
        {u_1}_{|x=0} = \tilde{g}_1(y),
        \end{cases}\quad
        \begin{cases}
        \Delta u_2 = 0, \quad y>0, x\in \R,\\
        {u_2}_{|y=0} = \tilde{g}_2(x).
        \end{cases}
    \end{equation}
    
   By taking the Fourier transform $\mathcal{F}_{y\to \xi}$ in the first problem \eqref{uu-}, we obtain that
    \begin{equation*}
       \begin{aligned}
            \| u_1\|_{H^l(\R^2_+)} 
           = & \left(\int_{\R} (1+|\xi|^{2l}) \frac{|\mathcal{F}_{y \to \xi} [\tilde{g}_1](\xi)|^2}{|\xi|} \,\d \xi\right)^{\frac{1}{2}} \\
            \lesssim & \|\tilde{g}_1\|_{H^{l-\frac{1}{2}}(\R)}+ \left[\int_0^x \tilde{g}_1(s) \d s\right]_{H^{\frac{1}{2}}(\R)} \\
            \lesssim & \|g_1\|_{H^l(\R_+)} + \left[\int_0^x g_1(s) \d s\right]_{H^{\frac{1}{2}}(\R_+)},
       \end{aligned}
    \end{equation*}
    where we have used the fact that $\int_0^x \tilde{g}_1(s) \d s$ is an even extension of $\int_0^x g_1(s) \d s$, and thus $[\int_0^x \tilde{g}_1(s) \d s]_{H^{\frac{1}{2}}(\R)} \lesssim [\int_0^x g_1(s) \d s]_{H^{\frac{1}{2}}(\R_+)}$.
    A similar analysis of $u_2$ ends the proof.

\end{proof}

Finally, we introduce the definition of $V^l_{2,\beta}(\Q)$ for $l<0$ as the dual space of $V^{-l}_{2,-\beta}(\Q)$, defined in Definition \ref{defn4-1}, with the norm
\begin{equation*}
    \|u\|_{V^l_{2,\beta}(\Q)} = \sup \left\{|\inp*{u}{v}|: v\in V^{-l}_{2,-\beta}(\Q),\, \|v\|_{V^{-l}_{2,-\beta}} = 1 \right\},
\end{equation*}
where $\inp*{\cdot}{\cdot}$ is the extension of the scalar product in $L^2(\Q)$ to pairs $(u,v) \in V^l_{2,\beta}(\Q) \times V^{-l}_{2,-\beta}(\Q)$.
Then, by integration by parts, the duality between $V^s_{2,\beta}$ and $V^{-s}_{2,-\beta}$, and the trace theorem for weighted Sobolev spaces, one obtain that for any $u \in C^\infty_c(\overline{\Q}\backslash\{0\})$, one has
\begin{equation}\label{V-1}
    \|\nabla u\|_{V^{-1}_{2,\beta}(\Q)} \lesssim \|u_{|x=0}\|_{V^{-\frac{1}{2}}_{2,\beta}(\R_+)} + \|u_{|y = 0}\|_{V^{-\frac{1}{2}}_{2,\beta}(\R_+)} + \|u\|_{V^{0}_{2,\beta}(\Q)}. 
\end{equation}

Now we recall a result of the Poisson equation in the quarter plane from \cite{kozlov1997elliptic} as follows.

\begin{lemma}(\cite[Theorem 6.1.1-Theorem 6.1.3]{kozlov1997elliptic})\label{lemma-kozlov'}
    The Poisson-Dirichlet problem 
    \begin{equation*}
        \Delta u = f \, \text{in } \Q, \quad u_{|x=0} = g_1(y), \quad u_{|y=0} = g_2(x)
    \end{equation*}
    is uniquely solvable in $V^{l}_{2,\beta}(\Q)$ for given $f \in V^{l-2}_{2,\beta}(\Q)$ and $g_1, g_2 \in V^{l-\frac{1}{2}}_{2,\beta}(\R_+)$ with $\frac{l-1-\beta}{2} \not \in (\mathbb{Z}\backslash\{0\})$, and the solution $u$ satisfies the following estimate
    \begin{equation*}
        \|u\|_{V^l_{2,\beta}(\Q)} \le C\left(
        \|f\|_{V^{l-2}_{2,\beta}(\Q)}+ \|g_1\|_{V^{l-\frac{1}{2}}_{2,\beta}(\R_+)} + \|g_2\|_{V^{l-\frac{1}{2}}_{2,\beta}(\R_+)}\right)
    \end{equation*}
   for a constant $C$.
    In particular, $(l,\beta)$ can be taken as $(0,2k)$ or $(2,2k)$ with $k \in \mathbb{N}$.
\end{lemma}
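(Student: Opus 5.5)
This is Lemma \ref{lemma-kozlov'}, quoted from Kozlov--Maz'ya--Rossmann. I would prove it by the Mellin transform in the logarithmic variable $\xi=\log r$, rather than by reflection.

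\textbf{Step 1: reduction to a strip.} Under $\mathcal{E}$, $(x,y)=(e^\xi\cos\theta,e^\xi\sin\theta)$, we have $\Delta = e^{-2\xi}(\partial_\xi^2+\partial_\theta^2)$. So the problem $\Delta u=f$ becomes
\begin{equation*}
(\partial_\xi^2+\partial_\theta^2)U = e^{2\xi}F \quad \text{in } \mathcal{C},
\end{equation*}
with Dirichlet data $U(\xi,0)=g_2(e^\xi)$ and $U(\xi,\pi/2)=g_1(e^\xi)$. Conjugating by the weight, set $W=e^{(\beta-l+1)\xi}U$. By \eqref{norm-xi}, the $V^l_{2,\beta}$ norm of $u$ is exactly the $H^l(\mathcal{C})$ norm of $W$.

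The data scale consistently. Since $e^{(\beta-l+1)\xi}e^{2\xi}=e^{(\beta-(l-2)+1)\xi}$, the source $e^{2\xi}F$ lies in the unweighted $H^{l-2}(\mathcal{C})$ norm with the same conjugation. For the boundary data, the one-dimensional weight $e^{(\beta-(l-\frac12)+\frac12)\xi}=e^{(\beta-l+1)\xi}$ also matches.

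In these variables $W$ solves a constant-coefficient problem on the strip:
\begin{equation*}
(\partial_\xi-\lambda_0)^2W+\partial_\theta^2W=\tilde F, \qquad \lambda_0=\beta-l+1,
\end{equation*}
with Dirichlet data in $H^{l-\frac12}(\R)$.

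\textbf{Step 2: Fourier analysis in $\xi$.} Lift the boundary data with a standard extension operator, so the data become homogeneous. Then Fourier transform in $\xi\mapsto\tau$. For each $\tau$ this gives the ODE
\begin{equation*}
\partial_\theta^2\hat W+(i\tau-\lambda_0)^2\hat W=\hat F \ \text{ on } (0,\pi/2), \qquad \hat W(0)=\hat W(\pi/2)=0.
\end{equation*}
The Dirichlet eigenvalues of $-\partial_\theta^2$ on $(0,\pi/2)$ are $(2n)^2$, $n\ge1$. So the operator pencil is invertible exactly when $(i\tau-\lambda_0)^2\neq 4n^2$ for all real $\tau$ and all $n\ge1$. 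This fails only if $\tau=0$ and $\lambda_0=\pm 2n$.

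Hence the condition is $\frac{\beta-l+1}{2}\notin\mathbb{Z}\setminus\{0\}$. This matches the hypothesis once one allows for the paper's sign convention on $\frac{l-1-\beta}{2}$, which is symmetric under $\pm$. The pairs listed in the statement satisfy it: $(l,\beta)=(0,2k)$ gives $\lambda_0=2k+1$, and $(2,2k)$ gives $\lambda_0=2k-1$, both odd.

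\textbf{Step 3: uniform resolvent bound.} The main obstacle is a resolvent bound for the ODE problem that is uniform in $\tau$, in the parameter-dependent norms $\sum_{j\le l}(1+|\tau|)^{2(l-j)}\|\partial_\theta^j\hat W\|^2$. For large $|\tau|$, I would integrate by parts against $\overline{\hat W}$. The real part of $(i\tau-\lambda_0)^2$ is $\lambda_0^2-\tau^2$, which is strongly negative, so it gives coercivity of order $\tau^2$. On compact $\tau$-sets, invertibility plus continuity gives the bound.

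The negative-order case $l=0$ follows by duality with $V^2_{2,-\beta}$, since the condition is invariant under the corresponding reflection of $\lambda_0$. Fractional orders follow by interpolation. Plancherel in $\xi$ then gives the estimate, and uniqueness follows from injectivity of the pencil.
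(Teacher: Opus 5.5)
Your argument is correct and follows the same route as the source: the paper gives no proof of this lemma beyond citing Theorems 6.1.1--6.1.3 of Kozlov--Maz'ya--Rossmann, and those theorems rest on exactly the Kondrat'ev--Mellin operator-pencil argument you sketch. Your computation that the Dirichlet pencil for opening $\pi/2$ has eigenvalues $\pm 2n$, $n\ge 1$, reproduces the condition $\frac{l-1-\beta}{2}\notin\mathbb{Z}\setminus\{0\}$ and the admissibility of $(0,2k)$ and $(2,2k)$.

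One point needs to be stated more carefully: any interpolation in $l$ must be carried out at fixed $\lambda_0=\beta-l+1$, not at fixed $\beta$. For instance, when $k\ge 1$, the segment from $(0,2k)$ to $(2,2k)$ passes through $\lambda_0=2k$, where the pencil is not invertible.
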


\vspace{.1in}
\hspace{-.25in}{\bf Acknowledgements.} This research was supported by National Natural Science Foundation of China under grant 12331008, and National Key R\&D Program of China under grant 2024YFA1013302.

\bibliographystyle{IEEEtran}
\bibliography{reference}

@article {MR3054600,
    AUTHOR = {G\'erard-Varet, David and Lacave, Christophe},
     TITLE = {The two-dimensional {E}uler equations on singular domains},
   JOURNAL = {Arch. Ration. Mech. Anal.},
  FJOURNAL = {Archive for Rational Mechanics and Analysis},
    VOLUME = {209},
      YEAR = {2013},
    NUMBER = {1},
     PAGES = {131--170},
      ISSN = {0003-9527,1432-0673},
   MRCLASS = {35Q31 (35A01 76B03)},
  MRNUMBER = {3054600},
MRREVIEWER = {Emil\ Wiedemann},
       DOI = {10.1007/s00205-013-0617-9}
}

@article {MR3981393,
    AUTHOR = {Lacave, Christophe and Zlato\v{s}, Andrej},
     TITLE = {The {E}uler equations in planar domains with corners},
   JOURNAL = {Arch. Ration. Mech. Anal.},
  FJOURNAL = {Archive for Rational Mechanics and Analysis},
    VOLUME = {234},
      YEAR = {2019},
    NUMBER = {1},
     PAGES = {57--79},
      ISSN = {0003-9527,1432-0673},
   MRCLASS = {76B03 (35Q31)},
  MRNUMBER = {3981393},
MRREVIEWER = {Xinyu\ He},
       DOI = {10.1007/s00205-019-01384-7}
}

@article{gues1990probleme,
  title={Probleme mixte hyperbolique quasi-lin{\'e}aire caract{\'e}ristique},
  author={Gues, Olivier},
  journal={Comm. PDE},
  volume={15},
  number={5},
  pages={595--654},
  year={1990},
  publisher={Taylor \& Francis}
}

@book{adams2003sobolev,
  title={Sobolev {S}paces},
  author={Adams, Robert A and Fournier, John JF},
  volume={140},
  year={2003},
  publisher={Elsevier}
}

@article{guo2025well,
  title={Well-posedness of initial boundary value problems for 2{D} compressible {MHD} equations in domains with corners},
  author={Guo, Wen and Wang, Ya-Guang},
  journal={ arXiv:2511.05343},
  year={2025}
}

@article{bardos2013euler,
  title={The {E}uler equations in planar nonsmooth convex domains},
  author={Bardos, Claude and Di Plinio, Francesco and Temam, Roger},
  journal={J. Math. Anal. Appl.},
  volume={407},
  number={1},
  pages={69--89},
  year={2013},
  publisher={Elsevier}
}

@article{xin1999zero,
  title={Zero-viscosity limit of the linearized Navier-Stokes equations for a compressible viscous fluid in the half-plane},
  author={Xin, Zhouping and Yanagisawa, Taku},
  journal={Comm. Pure  Appl. Math.},
  volume={52},
  number={4},
  pages={479--541},
  year={1999},
  publisher={Wiley Online Library}
}

@article{iftimie2011viscous,
  title={Viscous boundary layers for the Navier--Stokes equations with the Navier slip conditions},
  author={Iftimie, Drago{\c{s}} and Sueur, Franck},
  journal={Arch. Rational Mech. Anal.},
  volume={199},
  number={1},
  pages={145--175},
  year={2011},
  publisher={Springer}
}

@book{kozlov1997elliptic,
  title={Elliptic {B}oundary {V}alue {P}roblems in {D}omains with {P}oint {S}ingularities},
  author={Kozlov, Vladimir A. and Maz'ya, Vladimir G. and Rossmann, J{\"u}rgen},
  volume={52},
  year={1997},
  publisher={American Mathematical Soc.}
}

@article{kelliher2006navier,
  title={Navier--{S}tokes equations with {N}avier boundary conditions for a bounded domain in the plane},
  author={Kelliher, James P},
  journal={SIAM J. Math. Anal.},
  volume={38},
  number={1},
  pages={210--232},
  year={2006},
  publisher={SIAM}
}

@article{masmoudi2012uniform,
  title={Uniform regularity for the {N}avier--{S}tokes equation with {N}avier boundary condition},
  author={Masmoudi, Nader and Rousset, Fr{\'e}d{\'e}ric},
  journal={Arch. Rational Mech. Anal.},
  volume={203},
  number={2},
  pages={529--575},
  year={2012},
  publisher={Springer}
}

@book{grisvard1985elliptic,
  title     = {Elliptic Problems in Nonsmooth Domains},
  author    = {Grisvard, Pierre},
  series    = {Monographs and Studies in Mathematics},
  volume    = {24},
  publisher = {Pitman},
  address   = {Boston},
  year      = {1985},
}

@article{prandtl,
  title={\"{U}ber fl\"{u}ssigkeitsbewegung bei sehr kleiner reibung},
  author={L. Prandtl},
  journal={Verhandl. III, Internat.
Math. Kongr.},
  pages={484-491},
  year={1904}
}

@article{clopeau1998vanishing,
  author  = {Clopeau, Thierry and Mikeli{\'c}, Andro and Robert, Raoul},
  title   = {On the vanishing viscosity limit for the {2D} incompressible {Navier--Stokes} equations with the friction type boundary conditions},
  journal = {Nonlinearity},
  volume  = {11},
  number  = {6},
  pages   = {1625--1636},
  year    = {1998},
  doi     = {10.1088/0951-7715/11/6/011}
}

@article{lopesfilho2005inviscid,
  author  = {Lopes Filho, M. C. and Nussenzveig Lopes, H. J. and Planas, G.},
  title   = {On the inviscid limit for two-dimensional incompressible flow with {Navier} friction condition},
  journal = {SIAM Journal on Mathematical Analysis},
  volume  = {36},
  number  = {4},
  pages   = {1130--1141},
  year    = {2005},
  doi     = {10.1137/S0036141003432341}
}

@article{iftimie2006inviscid,
  author  = {Iftimie, Drago{\c{s}} and Planas, Gabriela},
  title   = {Inviscid limits for the {Navier--Stokes} equations with {Navier} friction boundary conditions},
  journal = {Nonlinearity},
  volume  = {19},
  number  = {4},
  pages   = {899--918},
  year    = {2006},
  doi     = {10.1088/0951-7715/19/4/007}
}

@article{gie2012boundary,
  author  = {Gie, Gung-Min and Kelliher, James P.},
  title   = {Boundary layer analysis of the {Navier--Stokes} equations with generalized {Navier} boundary conditions},
  journal = {Journal of Differential Equations},
  volume  = {253},
  number  = {6},
  pages   = {1862--1892},
  year    = {2012},
  doi     = {10.1016/j.jde.2012.06.008}
}

@book{grisvard2011elliptic,
  author    = {Grisvard, Pierre},
  title     = {Elliptic Problems in Nonsmooth Domains},
  series    = {Classics in Applied Mathematics},
  volume    = {69},
  publisher = {Society for Industrial and Applied Mathematics},
  address   = {Philadelphia, PA},
  year      = {2011},
  note      = {Reprint of the 1985 original},
  doi       = {10.1137/1.9781611972030}
}

@book{dauge1988elliptic,
  author    = {Dauge, Monique},
  title     = {Elliptic Boundary Value Problems on Corner Domains:
               Smoothness and Asymptotics of Solutions},
  series    = {Lecture Notes in Mathematics},
  volume    = {1341},
  publisher = {Springer-Verlag},
  address   = {Berlin},
  year      = {1988},
  doi       = {10.1007/BFb0086682}
}

@article{zajaczkowski1984solvability,
  author  = {Zajaczkowski, W. M. and Piskorek, A.},
  title   = {Solvability of an Initial Boundary Value Problem for the
             Euler Equations in Two-Dimensional Domains with Corners},
  journal = {Mathematical Methods in the Applied Sciences},
  volume  = {6},
  number  = {1},
  pages   = {1--22},
  year    = {1984},
  doi     = {10.1002/mma.1670060102}
}

@article{MengWang,
  author  = {Meng, Y. P. and Wang, Y. G.},
  title   = {A uniform estimate for the incompressible magneto-hydrodynamic equations with a slip boundary condition},
  journal = {Quartly in Appl. Math.},
  volume  = {74},
  number  = {1},
  pages   = {27-48},
  year    = {2016}
}

@article{WangWilliams,
  author  = {Wang, Y. G. and Williams, M.},
  title   = {The inviscid limit and stability of characteristic boundary layers for the compressible Navier-Stokes equations with Navier-friction boundary conditions},
  journal = {Ann. Inst. Fourier, Grenoble},
  volume  = {62},
  number  = {6},
  pages   = {2257-2314},
  year    = {2012}
}

@article{WangWangXin,
  author = {Wang, X. P. and Wang, Y. G. and Xin, Z.},
  title   = {Boundary layers in incompressible Navier-Stokes equations with Navier boundary conditions for the vanishing viscosity limit},
  journal = {Comm. Math. Sci.},
  volume  = {8},
  number  = {4},
  pages   = {965-998},
  year    = {2010}
}
\end{document}